\documentclass[11pt]{article}
\pdfoutput=1
\usepackage{listings}
\usepackage{pdflscape}
\usepackage{amsfonts}
\usepackage{epsfig}
\usepackage{lscape}
\usepackage{longtable}
\usepackage{amsmath,amssymb,amsthm}
\usepackage{graphicx}
\usepackage{subcaption}
\usepackage{color}
\usepackage{placeins}
\usepackage{url}
\usepackage{cases}
\usepackage{longtable}
\usepackage{supertabular}
\usepackage{multirow}

\usepackage{amsmath}
\allowdisplaybreaks[4]
\usepackage{cite}
\usepackage{algorithmic}
\usepackage{algorithm,float}
\usepackage{booktabs}
\usepackage{enumitem}
\usepackage{array}
\usepackage{lipsum}
\usepackage{hyperref}
\usepackage{cleveref}
\usepackage{svg}
\usepackage{xcolor}
\usepackage{subcaption}
\usepackage{authblk}

\usepackage[table]{xcolor}
\usepackage{booktabs}



\newtheorem{theorem}{Theorem}
\newtheorem{assumption}{Assumption}
\newtheorem{lemma}{Lemma}
\newtheorem{remark}{Remark}
\newtheorem{definition}{Definition}
\newtheorem{proposition}{Proposition}
\newtheorem{corollary}{Corollary}
\newtheorem{example}{Example}

\usepackage{caption}
\begin{document}

\title{\bf Efficient and provably convergent end-to-end training of deep neural networks with linear constraints}
\date{}

\author[1]{Zonglin Yang \textsuperscript{*}}
\author[1]{Zhexuan Gu \textsuperscript{*}}
\author[1]{Yancheng Yuan \textsuperscript{\dag}}
\affil[1]{Department of Applied Mathematics, The Hong Kong Polytechnic University, Hung Hom, Hong Kong}

\renewcommand{\thefootnote}{\fnsymbol{footnote}}

\footnotetext[1]{The authors contributed equally to this work.}
\footnotetext[2]{Corresponding author: {\tt yancheng.yuan@polyu.edu.hk}.}
\renewcommand{\thefootnote}{\arabic{footnote}}

\maketitle

\begin{abstract}
Training a deep neural network with the outputs of selected layers satisfying linear constraints is required in many contemporary data-driven applications. While this can be achieved by incorporating projection layers into the neural network, its end-to-end training remains challenging due to the lack of rigorous theory and efficient algorithms for backpropagation. A key difficulty in developing the theory and efficient algorithms for backpropagation arose from the nonsmoothness of the solution mapping of the projection layer. To address this bottleneck, we introduce an efficiently computable HS-Jacobian to the projection layer. Importantly, we prove that the HS-Jacobian is a conservative mapping for the projection operator onto the polyhedral set, enabling its seamless integration into the nonsmooth automatic differentiation framework for backpropagation. Therefore, many efficient algorithms, such as Adam, can be applied for end-to-end training of deep neural networks with linear constraints. Particularly, we establish convergence guarantees of the HS-Jacobian based Adam algorithm for training linearly constrained deep neural networks. Extensive experiment results on several important applications, including finance, computer vision, and network architecture design, demonstrate the superior performance of our method compared to other existing popular methods.
\end{abstract}

\newcommand{\keywords}[1]{%
  \par\vspace{0.5\baselineskip}%
  \noindent{\textbf{Keywords:} #1}
}

\keywords{deep neural networks, nonconvex optimization, nonsmooth analysis, conservative mapping, nonsmooth automatic differentiation}

\section{Introduction}

Designing deep neural networks (DNNs) whose outputs of selected layers satisfy certain linear constraints has become increasingly important in contemporary applications, such as deep learning based portfolio allocation \cite{wang2023linsatnet}, deep graph matching \cite{wang2021neural}, and the topological architecture design of DNNs \cite{xie2025mhc}. Without loss of generality, assume that the output $x$ of a given layer in the DNN satisfies the linear constraints
\begin{equation}
\label{eq: polyhedral_set}
    \mathcal{P}:=\{y \in \mathbb{R}^n ~|~ Ay \le a, By=b\},
\end{equation}
where $A \in \mathbb{R}^{m \times n},a \in \mathbb{R}^m, B \in \mathbb{R}^{l \times n}, b \in \mathbb{R}^l$ are given and the matrix $B$ is of full row rank. This satisfaction can be guaranteed by incorporating a projection layer into the deep neural network with $\tilde{x}=\Pi_{\mathcal{P}}(x)$, where $\Pi_{\mathcal{P}} : \mathbb{R}^n \to \mathbb{R}^n$ is a projection operator onto $\mathcal{P}$
\begin{equation}
\label{eq: projector-general}
    \Pi_{\mathcal{P}}(x) := \underset{y\in\mathcal{P}}{\arg\min} ~ \frac{1}{2} \|y-x\|_2^2.
\end{equation}

In the end-to-end training of such DNNs, the forward pass of the projection layers can be efficiently computed by highly optimized quadratic programming (QP) solvers \cite{gurobi, stellato2018osqp, liang2022qppal, chen2025hpr}. However, how to efficiently compute the backpropagation with mathematical rigor remains open due to the lack of a valid chain rule. 

Several existing works have tried to tackle this challenge. One popular idea is to approximate the projection layer by some operators with valid automatic differentiation, but the feasibility of the obtained approximated solutions is usually poor. In particular, the unrolling-based approximation and the smooth penalty-based methods are commonly applied. On the one hand, the unrolling-based methods approximate a projection operator through multiple iterative update steps, which can be efficiently integrated into the DNNs with valid automatic differentiation. An important example is to include the Sinkhorn algorithm \cite{sinkhorn1967concerning, cuturi2013sinkhorn} for approximating the projection onto the Birkhoff polytope \cite{xie2025mhc}. Nevertheless, the unrolling-based methods often require substantially deeper networks to achieve higher approximation precision, thereby incurring expensive memory overhead and computational complexity \cite{amos2017optnet}. On the other hand, the penalty-based method approximates the projection operator via the solution mapping of an unconstrained optimization problem \cite{linghu2026penalty}, where the backpropagation is simulated through a smooth penalty function. However, this formulation inevitably introduces computational errors in the gradient computation with non-negligible computation overheads.

Another main approach is based on the (generalized) differentials of the Karush–Kuhn–Tucker (KKT) system of the projection problem \eqref{eq: projector-general} with the (generalized) implicit function theorem \cite{clarke1990optimization, fiacco1990nonlinear, rockafellar1998variational, sun2001further, dontchev2009implicit, mordukhovich2012variational, bolte2021nonsmooth}. One example is OptNet \cite{amos2017optnet}, where the forward pass involves solving a QP problem. This approach was later generalized to convex optimization layers \cite{agrawal2019differentiable}, where the backpropagation is performed by implicitly differentiating the optimality system of its corresponding conic program \cite{agrawal2019differentiating}. A similar method based on the optimality condition and the basic sensitivity theorem \cite{fiacco1976sensitivity} was also proposed for QP problems \cite{magoon2025differentiation}. However, the effectiveness of these methods depends heavily on strong assumptions regarding the input data, such as the strictly complementary condition \cite{barratt2018differentiability, magoon2025differentiation} or differentiability of the corresponding cone projection operator \cite{amos2019differentiable, bolte2021nonsmooth}, which can easily fail in practice. Moreover, when the generalized implicit theorem is applied, we need to find an element of the Clarke Jacobian \cite{clarke1975generalized, clarke1990optimization} of the projection onto a general polyhedral set, which is computationally expensive \cite{han1997newton, li2020efficient}. 

In this paper, we address the challenges for training linearly constrained DNNs by introducing the efficiently computable HS-Jacobian to the projection layer \cite{han1997newton,li2020efficient}, which can be seamlessly integrated into the nonsmooth automatic differentiation framework for backpropagation. Therefore, many efficient algorithms, such as stochastic gradient descent and Adam \cite{kingma2015adam}, can be applied to train linearly constrained DNNs. As an important ingredient, we prove that the HS-Jacobian constitutes a conservative mapping. As a side theoretical result of independent interest, we show the equivalence between the HS-Jacobian and the B-subdifferential of the projection over a polyhedral set under the linear independence constraint qualification (LICQ). We further establish the convergence of Adam with our HS-Jacobian based backpropagation for training DNNs with projection layers. The superior performance of our method has been demonstrated by extensive numerical experiment results on diversified applications across finance, computer vision, and topological neural network architecture design.

The main contributions of this paper are as follows:
\begin{enumerate}
    \item We introduce the HS-Jacobian to the projection layer and prove that it is a conservative mapping. This provides a fundamental building block for designing the backpropagation method for training linearly constrained DNNs. We also establish the equivalence between the HS-Jacobian and the B-subdifferential of the projection onto the polyhedral set under the LICQ, which is of independent interest.
    \item We propose the HS-Jacobian based backpropagation algorithm, which can be seamlessly integrated into efficient algorithms, such as stochastic gradient descent and Adam, to enable efficient training of DNNs with linear constraints.
    \item We establish the convergence of the proposed HS-Jacobian based Adam algorithm for training linearly constrained DNNs.
    \item Extensive numerical experiment results on diversified applications demonstrate the superior performance of the proposed method.
\end{enumerate}

In the remainder of the paper, we first introduce HS-Jacobian and the backpropagation method based on it. We then conduct the convergence analysis for training linearly constrained DNNs by the HS-Jacobian based Adam algorithm. After that, we will demonstrate the superior performance of this method with extensive numerical results. Lastly, we conclude the paper with some discussions for potential future research.

\section{The HS-Jacobian of the projection layer over a polytope}

In this section, we first present the definition of the HS-Jacobian \cite{han1997newton} of the projection layer. Then, we introduce an efficient method for computing an element of the HS-Jacobian, which can be used for backpropagation. The relationship of the HS-Jacobian and the B-subdifferential of the projection over a polyhedral set will be established, which is of independent interest. As a key theoretical result of this section, we will prove that the HS-Jacobian constitutes a conservative mapping.

\subsection{The definition of HS-Jacobian}

Assume that the polyhedral set $\mathcal{P}$ defined in \eqref{eq: polyhedral_set} is nonempty. Let $x\in \mathbb{R}^n$ be any given input vector, and $\Pi_{\mathcal{P}}(x)$ be the output of the projection layer defined in \eqref{eq: projector-general}. Therefore, it is required to compute the derivative of $\Pi_{\mathcal{P}}(\cdot)$ with respect to $x$ for backpropagation. Note that $\Pi_{\mathcal{P}}(\cdot)$ may not be differentiable \cite{facchinei2007finite}, but it is almost everywhere differentiable according to Rademacher's theorem \cite{rademacher1919partielle}. More specifically, the B-subdifferential and the Clarke Jacobian of $\Pi_{\mathcal{P}}(\cdot)$, denoted by $\partial_B\Pi_{\mathcal{P}}(\cdot)$ and $\partial_C\Pi_{\mathcal{P}}(\cdot)$ have been fully characterized \cite{robinson1987local, rockafellar1998variational, facchinei2007finite, mordukhovich2012variational}. However, finding an element in the B-subdifferential or the Clarke Jacobian of $\Pi_{\mathcal{P}}(\cdot)$ is computationally expensive, which is not suitable for backpropagation \cite{han1997newton}. Next, we introduce HS-Jacobian of $\Pi_{\mathcal{P}}(\cdot)$ as an efficiently computable replacement \cite{han1997newton}, which can be used for the backpropagation of the projection layer. It follows from the optimality condition corresponding to \eqref{eq: projector-general} that there exist $(\lambda,\mu) \in \mathbb{R}_+^m \times \mathbb{R}^l$ such that $(\Pi_{\mathcal{P}}(x), \lambda, \mu)$ satisfies the following KKT system
\begin{equation}
\label{eq: KKT_system}
    \left\{
    \begin{aligned}
        & \Pi_{\mathcal{P}}(x) - x + A^\top \lambda + B^\top \mu =0, \\
        & A \Pi_{\mathcal{P}}(x) -a \le 0, \quad B\Pi_{\mathcal{P}}(x) = b, \\
        & \lambda \ge 0, \quad \lambda^\top (A\Pi_{\mathcal{P}}(x) - a) = 0.
    \end{aligned}
    \right.
\end{equation}

For convenience, we denote the collection of multipliers at $x$ by
\begin{equation*}
    M(x) := \{(\lambda,\mu) ~|~ (x,\lambda,\mu) \text{ satisfies \eqref{eq: KKT_system}} \},
\end{equation*}
and index sets
\begin{equation}
\label{eq: active_set}
    I(x) := \{i \in \{1,2,\cdots,m\} ~|~ A_i \Pi_{\mathcal{P}}(x) = a_i\}, 
\end{equation}
\begin{equation*}
    \mathrm{supp}(\lambda) := \{i \in \{1,2,\cdots,m\} ~|~ \lambda_i > 0\},
\end{equation*}
where $A_i$ is the $i$th row of $A$ and $a_i, \lambda_i$ represent the $i$th elements of $a, \lambda$, respectively. Denote the collection of index sets by
\begin{equation*}
    \begin{aligned}
        \mathcal{E}(&x) := \{K\subseteq \{1,2,\cdots,m\} ~|~ \exists ~ (\lambda,\mu) \in M(x), \mathrm{s.t.} \\
        &\mathrm{supp}(\lambda) \subseteq K \subseteq I(x), 
         [A_K^\top ~ B^\top] \text{ is of full column rank}\},
    \end{aligned}
\end{equation*}
where $A_K$ represents the matrix consisting of rows indexed by $K$. Since $M(x)$ is a nonempty convex polyhedral set with no lines, there exists at least one extreme point of $M(x)$ \cite[Corollary 18.5.3]{rockafellar1970convex}, which further implies that the index set $\mathcal{E}(x)$ \cite{han1997newton} is non-empty. The HS-Jacobian \cite{han1997newton} of $\Pi_{\mathcal{P}}(\cdot)$ at $x$ is defined as 
\begin{equation}
\label{eq: HS-Jacobian-set}
\begin{aligned}
    \partial_{HS}\Pi_{\mathcal{P}}(x) :=
    \{ J_K \in \mathbb{R}^{n\times n} ~|~  K\in \mathcal{E}(x), 
    J_K = 
    I_n -  H_K\left( H_K^\top H_K\right)^{-1} H_K^\top
     \},
\end{aligned}
\end{equation}
where $H_K=[A_K^\top ~ B^\top]$.

\subsection{An efficient algorithm for finding an element of HS-Jacobian}

To incorporate the HS-Jacobian into end-to-end DNN training, it is required to compute one of its elements. In other words, we need to find an index set $K \in \mathcal{E}(x)$,  which may still be computationally expensive. Fortunately, Li et al \cite{li2020efficient} addressed this challenge to find an efficiently computable element in $\partial_{HS}\Pi_{\mathcal{P}}(x)$ as
\begin{equation}
\label{eq: HS_Jacobian_computation}
    J(x) := I_n - [A_{I(x)}^\top ~ B^\top] \left( \begin{bmatrix}A_{I(x)}\\ B\end{bmatrix} [A_{I(x)}^\top ~ B^\top]\right)^{\dagger} \begin{bmatrix} A_{I(x)}\\B \end{bmatrix},
\end{equation}
where $I(x)$ is the active set \eqref{eq: active_set} and $M^\dagger$ is the Moore-Penrose pseudo-inverse of matrix $M$. Importantly, this provides a computationally efficient way to obtain an element of $\partial_{HS}\Pi_{\mathcal{P}}(x)$ without the corresponding multipliers $(\lambda,\mu)$. Consequently, the computed element $J(x)$ of $\partial_{HS}\Pi_{\mathcal{P}}(x)$ can be directly employed for backpropagation, resulting in an HS-Jacobian based backpropagation. 

\subsection{The connections between HS-Jacobian and B-subdifferential}

It has already been mentioned in \cite{han1997newton} that the $\partial_B\Pi_{\mathcal{P}}(\cdot) \subseteq \partial_{HS}\Pi_{\mathcal{P}}(\cdot)$ and they are not identical in general. In the next theorem, we show that $\partial_{B}\Pi_{\mathcal{P}}(x) = \partial_{HS}\Pi_{\mathcal{P}}(x)$ if the LICQ holds at $\Pi_{\mathcal{P}}(x)$. The detailed proof can be found in section \ref{sec: appendix_thm_1}. 
\begin{theorem}
\label{thm: licq_identical}
    Let $\mathcal{P}$ be the polyhedral set defined in \eqref{eq: polyhedral_set}. Let $x \in \mathbb{R}^n$ be any given vector. Suppose the LICQ holds at $\Pi_{\mathcal{P}}(x)$, then $\partial_{B}\Pi_{\mathcal{P}}(x) = \partial_{HS}\Pi_{\mathcal{P}}(x)$. 
\end{theorem}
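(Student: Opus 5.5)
Since \cite{han1997newton} already gives $\partial_B\Pi_{\mathcal{P}}(x)\subseteq\partial_{HS}\Pi_{\mathcal{P}}(x)$, the work is the reverse inclusion under the LICQ. Write $\bar y=\Pi_{\mathcal{P}}(x)$ and $I=I(x)$. The LICQ says the rows of $A_I$ and $B$ are linearly independent. Then $M(x)$ is a singleton $\{(\bar\lambda,\bar\mu)\}$. So $\mathcal{E}(x)$ is exactly the family of sets $K$ with $\mathrm{supp}(\bar\lambda)\subseteq K\subseteq I$; full column rank of $H_K$ is automatic. Fix such a $K$. I will construct points $x^k\to x$ at which $\Pi_{\mathcal{P}}$ is differentiable with Jacobian exactly $J_K$. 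Then $J_K\in\partial_B\Pi_{\mathcal{P}}(x)$.

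\textbf{Construction.} Set $x^k := x + \frac1k\sum_{i\in K\setminus \mathrm{supp}(\bar\lambda)} A_i^\top e + \frac1k d$ in suitable form. The cleaner way is to prescribe the KKT data directly. Let $y^k:=\bar y - \frac1k w$, where $w$ satisfies $A_K w=0$, $Bw=0$ and $A_i w=1$ for $i\in I\setminus K$. Such a $w$ exists by the LICQ, since the rows of $A_I$ and $B$ are independent. Let $\lambda^k_i:=\bar\lambda_i+\frac1k$ for $i\in K$ and $\lambda^k_i:=0$ otherwise, and put $\mu^k:=\bar\mu$. Define $x^k:=y^k+A^\top\lambda^k+B^\top\mu^k$, so $x^k\to x$. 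I then check the KKT system \eqref{eq: KKT_system}:
\begin{itemize}
\item $y^k$ is feasible: constraints in $K$ and the equalities stay tight, constraints in $I\setminus K$ become strictly inactive, and inactive constraints stay strictly inactive for large $k$.
\item Complementarity holds, since $\lambda^k$ is supported on $K$, where the constraints are tight.
\end{itemize}
Hence $\Pi_{\mathcal{P}}(x^k)=y^k$ by convexity, with active set $I(x^k)=K$ and multipliers $\lambda^k_K>0$. So strict complementarity and the LICQ both hold at $x^k$.

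\textbf{Differentiability.} At such a point the active set is locally stable. The map $z\mapsto z-H_K(H_K^\top H_K)^{-1}(H_K^\top z-c_K)$, with $c_K=(a_K;b)$, is the projection onto the affine set $\{A_Ky=a_K,\ By=b\}$. Call it the affine projection. For $z$ near $x^k$, continuity of this affine projection and of its multipliers $(H_K^\top H_K)^{-1}(H_K^\top z-c_K)$ gives two facts:
\begin{itemize}
\item the multipliers stay positive near $x^k$;
\item the inactive constraints stay strictly inactive near $x^k$.
\end{itemize}
So the affine projection satisfies the KKT system of \eqref{eq: projector-general} and equals $\Pi_{\mathcal{P}}(z)$ in a neighborhood of $x^k$. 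Thus $\Pi_{\mathcal{P}}$ is differentiable at $x^k$ with Jacobian $J_K$, and letting $k\to\infty$ gives $J_K\in\partial_B\Pi_{\mathcal{P}}(x)$.

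\textbf{Main obstacle.} The delicate step is the simultaneous choice of perturbation. It must strictly deactivate exactly the indices in $I\setminus K$ while making the multipliers on $K$ strictly positive, including the indices in $K\setminus\mathrm{supp}(\bar\lambda)$ where $\bar\lambda_i=0$. Existence of $w$ relies essentially on the LICQ: without it the linear system may be inconsistent, which is why the two sets can differ in general. For the forward inclusion I cite \cite{han1997newton}. As a sanity check, a B-limit's active-set pattern $K$ at nearby differentiable points must satisfy $\mathrm{supp}(\bar\lambda)\subseteq K\subseteq I$ by continuity of multipliers under the LICQ.
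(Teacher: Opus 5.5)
Your proposal is correct and follows essentially the same construction as the paper. You perturb $\bar y$ along a direction (your $-w$, the paper's $d$) that keeps $K$ and the equalities tight while strictly deactivating $I(x)\setminus K$, raise the multipliers on $K$ by $1/k$, and set $x^k=y^k+A^\top\lambda^k+B^\top\mu^k$, so that $I(x^k)=\mathrm{supp}(\lambda^k)=K$. The only difference is that you check differentiability at $x^k$ directly, by showing that $\Pi_{\mathcal{P}}$ coincides near $x^k$ with the affine projection onto $\{A_Ky=a_K,\,By=b\}$; the paper instead cites the strict-complementarity differentiability result of Facchinei and Pang. You should also delete the abandoned first formula for $x^k$.
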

As a direct consequence, if there are no inequality constraints in $\mathcal{P}$, we know that the HS-Jacobian and B-subdifferential of $\Pi_{\mathcal{P}}(\cdot)$ are identical.

\subsection{The conservativity of HS-Jacobian}

Next, we prove that the HS-Jacobian $\partial_{HS}\Pi_{\mathcal{P}}$ defined in \eqref{eq: HS-Jacobian-set} is a conservative mapping for the projection operator $\Pi_{\mathcal{P}}$. Our proof is inspired by the equivalence between semismoothness and conservativity for semi-algebraic mappings\footnote{See section \ref{sec: appendix_pre} for the definition of semi-algebraic mapping.} \cite{davis2022conservative}. To this end, we first show that both the projection operator $\Pi_{\mathcal{P}}$ and its HS-Jacobian $\partial_{HS}\Pi_{\mathcal{P}}$ are semi-algebraic mappings. We then prove the conservativity of $\partial_{HS}\Pi_{\mathcal{P}}$ by invoking this equivalence. These results are formalized in the following theorem. Detailed proofs are provided in section \ref{sec: appendix_thm_2}.

\begin{theorem}
\label{thm: semi_algebraic_Jacobian}
    The HS-Jacobian $\partial_{HS}\Pi_{\mathcal{P}}: \mathbb{R}^n \rightrightarrows \mathbb{R}^{n \times n}$ is a semi-algebraic mapping. Moreover, it is a conservative mapping for the projection operator $\Pi_{\mathcal{P}}: \mathbb{R}^n \to \mathbb{R}^n$.
\end{theorem}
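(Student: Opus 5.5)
The plan follows the route announced before the statement. First I show that $\Pi_{\mathcal{P}}$ and the set-valued map $\partial_{HS}\Pi_{\mathcal{P}}$ are semi-algebraic. Then I invoke the characterization of \cite{davis2022conservative}: for a locally Lipschitz semi-algebraic map $F$, a semi-algebraic set-valued map $D$ with nonempty compact values and closed graph is conservative for $F$ as soon as the following semismooth-type expansion holds at every $x$:
\begin{equation*}
F(y)-F(x)-V(y-x)=o(\|y-x\|) \quad \text{uniformly over } V\in D(y), \text{ as } y\to x.
\end{equation*}
If closedness of the graph fails, I will work with its graph closure. The closure remains semi-algebraic, and conservativity of the closure gives conservativity of the original map, since the original is contained in it and a conservative field only needs the chain rule along absolutely continuous curves for some selection.

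\textbf{Semi-algebraicity.} The graph of $\Pi_{\mathcal{P}}$ is
\begin{equation*}
\{(x,y) : y\in\mathcal{P},\ \langle x-y, z-y\rangle\le 0\ \forall z\in\mathcal{P}\}.
\end{equation*}
Equivalently, via the KKT system \eqref{eq: KKT_system}, it is the set of $(x,y)$ for which there exist $(\lambda,\mu)$ satisfying polynomial equalities and inequalities. By Tarski--Seidenberg this projection is semi-algebraic.

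For $\partial_{HS}\Pi_{\mathcal{P}}$, the key point is that there are only finitely many index sets $K\subseteq\{1,\dots,m\}$. So the graph is the finite union over $K$ with $[A_K^\top~B^\top]$ of full column rank of the sets
\begin{equation*}
\{(x,J_K): \exists(\lambda,\mu),\ (\Pi_{\mathcal{P}}(x),\lambda,\mu)\text{ solves }\eqref{eq: KKT_system},\ \lambda_i=0\ (i\notin K),\ A_K\Pi_{\mathcal{P}}(x)=a_K\}.
\end{equation*}
Each of these is the product of a semi-algebraic set in $x$ (again by quantifier elimination over $(\lambda,\mu)$) with the fixed matrix $J_K$. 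Values are finite, hence compact and nonempty.

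\textbf{Expansion estimate.} This is the main step. Fix $x$. The active set $I(\cdot)$ is locally upper semicontinuous: $I(y)\subseteq I(x)$ for $y$ near $x$, by continuity of $\Pi_{\mathcal{P}}$. Similarly, $\mathcal{E}(y)\subseteq\mathcal{E}(x)$ near $x$. To see this, note that multipliers in $M(y)$ can be taken near $M(x)$: by Hoffman's error bound the map $M$ is outer Lipschitz, and the polyhedral structure lets me pass from $\lambda\in M(y)$ with support in $K$ to some multiplier in $M(x)$ with support in $K$. I will argue this via a Hoffman bound on the polyhedron $\{(\lambda,\mu): \lambda\ge 0,\ \lambda_{K^c}=0,\ A_K^\top\lambda_K+B^\top\mu=x-\Pi_{\mathcal{P}}(x)\}$, whose right-hand side is perturbed continuously. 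Nonemptiness for $y$ near $x$ yields nonemptiness at $x$ by closedness.

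Then, for $K\in\mathcal{E}(y)$, write $u=\Pi_{\mathcal{P}}(y)$ and $v=\Pi_{\mathcal{P}}(x)$. Both satisfy $A_Ku=a_K=A_Kv$ and $Bu=b=Bv$, so $u-v\in\ker H_K^\top$. Moreover $u-y$ and $v-x$ lie in $\mathrm{range}\,H_K$, because the respective multipliers are supported in $K$ (for $x$, using the multiplier obtained above). Since $J_K$ is the orthogonal projector onto $\ker H_K^\top$,
\begin{equation*}
J_K(y-x)=J_K(u-v)+J_K\big((y-u)-(x-v)\big)=u-v.
\end{equation*}
This gives the exact identity $\Pi_{\mathcal{P}}(y)-\Pi_{\mathcal{P}}(x)=V(y-x)$ for all $V\in\partial_{HS}\Pi_{\mathcal{P}}(y)$ with $y$ near $x$, which is even stronger than semismoothness. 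The delicate point is producing a common-support multiplier at $x$. If the Hoffman argument is awkward, I will instead use a limiting argument: take $y_k\to x$ with fixed $K$, bounded multipliers (bounded because $H_K$ has full column rank), and pass to the limit.

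Combining the three steps with the result of \cite{davis2022conservative} yields conservativity.
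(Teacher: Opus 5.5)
Your route is essentially the paper's. The graph of $\partial_{HS}\Pi_{\mathcal{P}}$ is a finite union of sets $S_K\times\{J_K\}$, each $S_K$ semi-algebraic by Tarski--Seidenberg. Conservativity then comes from the exact identity $\Pi_{\mathcal{P}}(y)-\Pi_{\mathcal{P}}(x)=J(y-x)$ for $J\in\partial_{HS}\Pi_{\mathcal{P}}(y)$, $y$ near $x$, fed into Theorem 3.5 of \cite{davis2022conservative}. The only real difference is that you reprove Lemma 2.1 of \cite{han1997newton} instead of citing it. Your limiting version of that step is correct: full column rank of $H_K$ forces $(\lambda_K,\mu)=(H_K^\top H_K)^{-1}H_K^\top(y-\Pi_{\mathcal{P}}(y))$, which is continuous in $y$, and the projector computation $J_K(y-x)=u-v$ is right. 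Hoffman's bound is not the right tool there, since it presupposes feasibility at $x$. What you actually use is closedness of the cone $A_K^\top\mathbb{R}^{|K|}_+ + B^\top\mathbb{R}^l$.

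One point must be fixed. Closed graph is part of the definition of a conservative mapping (Definition~\ref{def: conservative_mapping}). The chain rule must also hold for every $V\in J_f(\gamma(t))$, not just for some selection. So passing to the graph closure cannot certify the original map: if its graph were not closed, it would simply not be conservative, whatever the closure does. You do not need this fallback. Your step already gives $\partial_{HS}\Pi_{\mathcal{P}}(y)\subseteq\partial_{HS}\Pi_{\mathcal{P}}(x)$ for $y$ near $x$. Since $\partial_{HS}\Pi_{\mathcal{P}}(x)$ is finite, any limit of $J_k\in\partial_{HS}\Pi_{\mathcal{P}}(x_k)$ with $x_k\to x$ lies in $\partial_{HS}\Pi_{\mathcal{P}}(x)$; this is exactly how the paper proves graph closedness.

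When you invoke \cite{davis2022conservative}, also record two hypotheses: directional differentiability of $\Pi_{\mathcal{P}}$, and the three conditions on $F(x,u)=\{Ju : J\in\partial_{HS}\Pi_{\mathcal{P}}(x)\}$. Both are immediate, since the values are finite and each $J_K$ is an orthogonal projector.
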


This theorem provides a theoretical foundation to use the element \eqref{eq: HS_Jacobian_computation} from HS-Jacobian for backpropagation. Next, we move to introduce the efficient and provable algorithms for training of linearly constrained DNNs.

\section{An HS-Jacobian based backpropagation algorithm for training linearly constrained DNNs}

In this section, we demonstrate that the Adam \cite{kingma2015adam} with the proposed HS-Jacobian based backpropagation is an efficient and provable algorithm for training DNNs with linear constraints.

\subsection{The decomposition of a DNN in terms of parameters}

For brevity, we denote all the trainable model parameters by 
\begin{equation*}
    \theta=(\theta_1,\theta_2,\cdots,\theta_p) \in \mathbb{R}^p,
\end{equation*}
and let $f_\theta: \mathbb{R}^{n_1} \to \mathbb{R}^{n_2}$ denote the DNN incorporating projection layers. Given an input $x \in \mathbb{R}^{n_1}$ and its corresponding ground truth $y \in \mathbb{R}^{n_2}$, we define function $\Phi: \mathbb{R}^p \to \mathbb{R}$ with respect to $\theta$ as 
\begin{equation*}
    \Phi(\theta;x,y) := \ell(y, f_\theta(x)),
\end{equation*}
where $\ell: \mathbb{R}^{n_2} \times \mathbb{R}^{n_2} \to \mathbb{R}$ is some given loss function.

Following the framework of \cite{bolte2021conservative}, we can express $\Phi(\theta;x,y)$ as a composition of elementary functions by unfolding the DNN via a directed acyclic computational graph. We sequentially define the auxiliary variables $\theta_{p+1}, \cdots, \theta_q \in \mathbb{R}$ along the forward pass of the DNN. Specifically, for the auxiliary variable $\theta_k$, it is defined by an elementary function $\phi_k$ in terms of a subset of previously defined variables, i.e.,
\begin{equation*}
    \theta_k:=\phi_k(\theta_{\mathrm{parents(k)}}), \quad \theta_{\mathrm{parents}(k)} := (\theta_i)_{i \in \mathrm{parents}(k)},
\end{equation*}
where $\mathrm{parents}(k) \subset \{1,2,\cdots,k-1\}$ is the set of indices on which $\theta_k$ directly depends and $k=p+1,\cdots,q$. The function $\phi_k$ corresponds to a coordinate of elementary mappings such as affine mappings, activation mappings, loss functions, or the projection operators\footnote{Although the projection operator $\Pi_{\mathcal{P}}$ may not have a closed formula, we still treat it as an elementary function here.}. We illustrate the corresponding computational graph by Fig.~\ref{fig: decomposition_illustration}. The definition procedure is shown in Algorithm \ref{alg: definition_program}. 

\begin{figure}[thbp]
    \centering
    \includegraphics[width=0.7\linewidth]{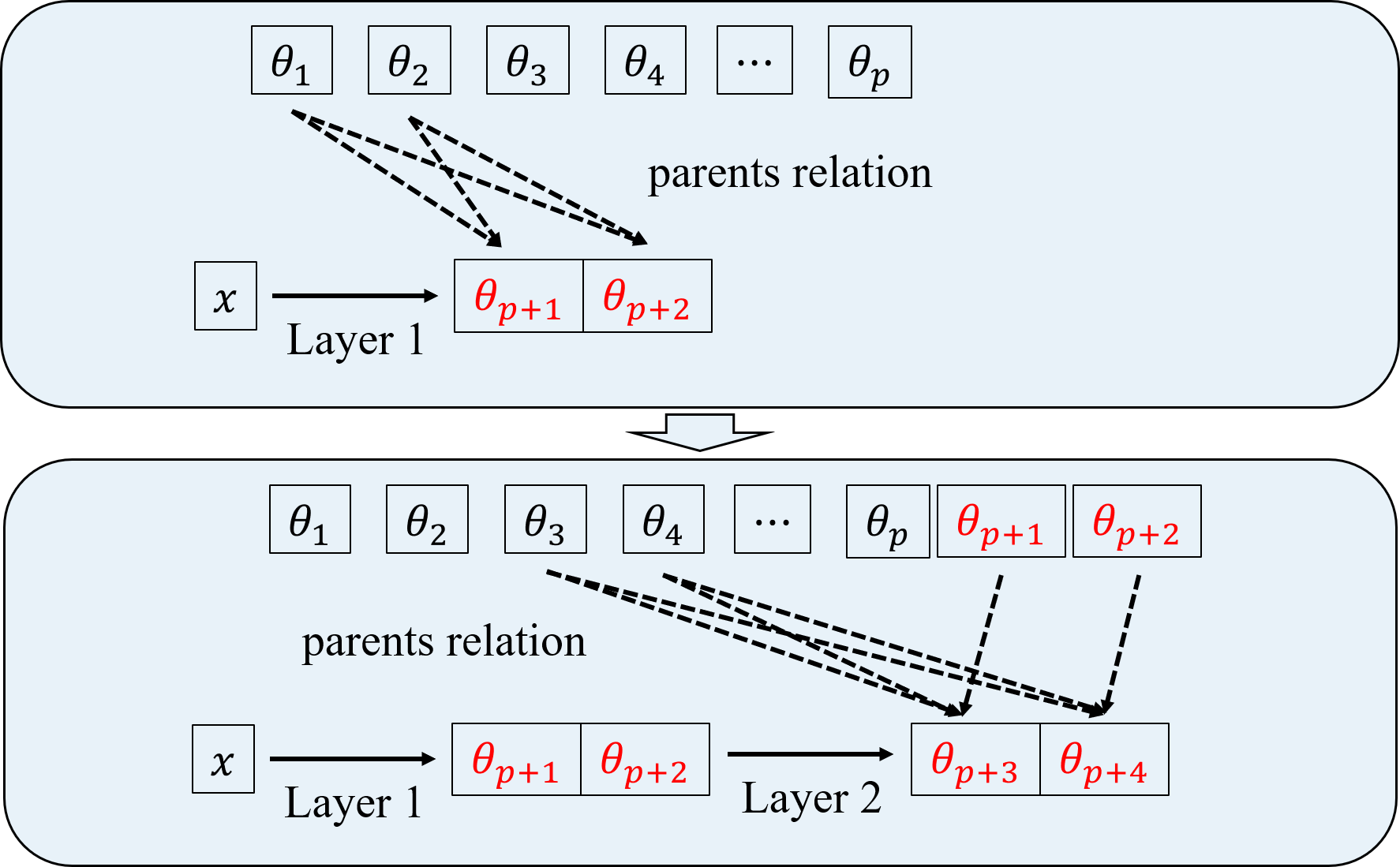}
    \caption{A simple illustration of the directed acyclic computational graph corresponding to the definition procedure. The model parameters $\theta_1,\cdots,\theta_p$ and the auxiliary variables $\theta_{p+1},\theta_{p+2}, \theta_{p+3}, \theta_{p+4}$ refer to the nodes. The dashed lines refer to the directed edges (corresponding to ``parents relation''). The auxiliary variables and their ``parents'' are determined sequentially along the forward pass. In this graph, $\mathrm{parents}(p+1)=\{1,2\}, \mathrm{parents}(p+2)=\{1,2\}, \mathrm{parents}(p+3)=\{3,4,p+1\}, \mathrm{parents}(p+4)=\{3,4,p+2\}$}
    \label{fig: decomposition_illustration}
\end{figure}

\begin{algorithm}[!h]
\caption{Definition program of $\Phi: \mathbb{R}^p \to \mathbb{R}$}
\label{alg: definition_program}
\renewcommand{\algorithmicrequire}{\textbf{Input:}}
\renewcommand{\algorithmicensure}{\textbf{Output:}}
\begin{algorithmic}[1]
\REQUIRE Initial parameters $\theta=(\theta_1,\theta_2,\cdots,\theta_p)$, given data $(x,y)$.
\FOR{$k = p+1,p+2,\cdots, q$}
    \STATE Specify $\theta_k=\phi_k(\theta_{\mathrm{parents}(k)}),$
    where $\theta_{\mathrm{parents}(k)} = (\theta_i)_{i \in \mathrm{parents}(k)}$.
\ENDFOR
\ENSURE $\theta_{p+1}, \cdots, \theta_q$ and $\phi_{p+1}, \cdots, \phi_q$.
\end{algorithmic}
\end{algorithm}

In applications, the elementary functions $\phi_k$ in Algorithm \ref{alg: definition_program} are determined automatically during the construction of the DNN $f_\theta$. To illustrate Algorithm \ref{alg: definition_program}, we present a simple example below.

\begin{example}
    Suppose input $x\in \mathbb{R}^2$, parameters $W\in \mathbb{R}^{2\times 2}, \beta\in \mathbb{R}^2$. Let
\begin{equation*}
    x=\begin{pmatrix}
        x_1 \\ x_2
    \end{pmatrix}, \quad W=\begin{pmatrix}
        w_{11} ~ w_{12} \\ w_{21} ~ w_{22}
    \end{pmatrix}, \quad \beta = \begin{pmatrix}
        \beta_1 \\ \beta_2
    \end{pmatrix}.
\end{equation*}
Suppose $\theta_1=w_{11}, \theta_2=w_{21}, \theta_3=w_{12}, \theta_4=w_{22}, \theta_5=\beta_1, \theta_6=\beta_2$ and $\theta=(\theta_1,\cdots, \theta_6)^\top \in \mathbb{R}^6$. Let
\begin{equation*}
    f_\theta(x) = (\Pi_{\mathcal{P}}(Wx+\beta))^\top \Pi_{\mathcal{P}}(Wx+\beta), \quad \ell(y_1,y_2) = |y_1 - y_2|,
\end{equation*}
where $\mathcal{P}=\{x\in \mathbb{R}^2 ~|~ x_1+x_2=1, x_1 \ge 0.3\}$. Then for input $x\in \mathbb{R}^2$ and ground truth $y\in \mathbb{R}$, the parameter function $\Phi(\theta;x,y)=\ell(y,f_\theta(x))$ can be recursively defined by
\begin{equation*}
    \begin{aligned}
        \begin{pmatrix}
        \theta_7 \\ \theta_8
    \end{pmatrix} = \begin{pmatrix}
        \theta_1 x_1 + \theta_3 x_2 + \theta_5 \\ \theta_2x_1+\theta_4 x_2 + \theta_6
    \end{pmatrix}, \quad \begin{pmatrix}
        \theta_9 \\ \theta_{10}
    \end{pmatrix} = \Pi_{\mathcal{P}}\left( \begin{pmatrix}
        \theta_7 \\ \theta_8
    \end{pmatrix} \right), 
    \end{aligned}
\end{equation*}
\begin{equation*}
    \theta_{11} = \theta_9^2 +\theta_{10}^2, \quad \theta_{12} = |\theta_{11} - y|.
\end{equation*}
In this case, $\mathrm{parents}(7)=\{1,3,5\}, \mathrm{parents}(8)=\{2,4,6\}, \mathrm{parents}(9)=\{7,8\}, \mathrm{parents}(10)=\{7,8\}$, $\mathrm{parents}(11)=\{9,10\}, \mathrm{parents}(12)=\{11\}$.
\end{example}

As revealed in \cite{bolte2021conservative, xiao2024adam}, definability in an $o$-minimal structure \cite{pillay1984definable, van1996geometric} plays a crucial role in the convergence analysis of DNN training. It is well known that the semi-algebraic mappings are definable in the $o$-minimal structure $\mathbb{R}_{\mathrm{alg}} = (\mathbb{R}, +,\cdot,<,0,1)$ \cite{bierstone1988semianalytic}. Moreover, as a direct consequence of Khovanskii's Theorem \cite{khovanskii1980class} and Wilkie's Theorem \cite{wilkie1996model}, the structure $\mathbb{R}_{\mathrm{exp}} = (\mathbb{R},+, \cdot, \mathrm{exp}, <, 0, 1)$ is also $o$-minimal \cite{marker1996model}, in which the semi-algebraic sets are definable as well. 

When constructing a DNN, a variety of activation functions may be employed, such as ReLU \cite{nair2010rectified}, Leaky-ReLU \cite{maas2013rectifier}, Sigmoid \cite{rumelhart1986learning}, hyperbolic tangent \cite{lecun1989backpropagation} and so on. It is known that the first two are semi-algebraic, while the latter two are definable in $\mathbb{R}_{\mathrm{exp}}$ \cite{bolte2021conservative}. In addition, the common loss functions such as $\ell_1$-loss, mean squared error (MSE) loss, hinge loss, logistic loss, and cross-entropy loss are also definable in $\mathbb{R}_{\mathrm{exp}}$ \cite{xiao2024adam}. Moreover, the projection layer is also definable in $\mathbb{R}_{\mathrm{exp}}$ since it is a semi-algebraic mapping (see section \ref{sec: appendix_thm_2}). For simplicity, ``definable'' will henceforth refer to ``definable in $\mathbb{R}_{\mathrm{exp}}$'' throughout the remainder of this paper. Based on the preceding discussion, we make the following mild assumption.

\begin{assumption}
\label{assum: block_definable}
    All elementary functions $\phi_k$ specified by Algorithm \ref{alg: definition_program} for defining  $\Phi$ are locally Lipschitz continuous and definable.
\end{assumption}

\subsection{A nonsmooth automatic differentiation algorithm for linearly constrained DNNs}

When training DNNs, automatic differentiation (AD) \cite{griewank2008evaluating, baydin2018automatic} plays a central role in computing parameter updates during backpropagation. The classical AD algorithm is designed for smooth functions. However, many activation functions and layers in modern DNNs are nonsmooth, in which case the classical procedure fails to produce valid gradients. To address this limitation, Bolte and Pauwels \cite{bolte2021conservative} proposed a nonsmooth AD framework, where the gradients of the elementary functions $\phi_k$ are replaced by their corresponding conservative fields. Under this framework, the reverse mode nonsmooth AD algorithm for computing the ``gradient'' of a DNN is summarized in Algorithm \ref{alg: reverse_mode_AD}.

\begin{algorithm}[!h]
\caption{Reverse mode of nonsmooth AD algorithm for $\Phi: \mathbb{R}^p \to \mathbb{R}$}
\label{alg: reverse_mode_AD}
\renewcommand{\algorithmicrequire}{\textbf{Input:}}
\renewcommand{\algorithmicensure}{\textbf{Output:}}
\begin{algorithmic}[1]
\REQUIRE All involving parameters $\theta=(\theta_1,\theta_2,\cdots,\theta_q)$, given data $(x,y)$, a conservative field $D_k: \mathbb{R}^{|\mathrm{parents}(k)|} \rightrightarrows \mathbb{R}^{|\mathrm{parents}(k)|}$ for each $\phi_k, k=p+1,\cdots,q$. 
\STATE Initialization: $g=(0,0,\cdots,0,1) \in \mathbb{R}^q$.
\FOR{$k = q,q-1,\cdots,p+1$}
\STATE $d_k=(d_{kj})_{j=1}^{\mathrm{parents}(k)} \in D_k(\theta_{\mathrm{parents}(k)})$.
\FOR{$j\in \mathrm{parents}(k)$}
    \STATE $g_j=g_j +g_k d_{kj}$.
\ENDFOR
\ENDFOR
\ENSURE $g=(g_1,g_2,\cdots,g_p) \in \mathbb{R}^p$.
\end{algorithmic}
\end{algorithm}
\begin{remark}
    The existence of conservative field $D_k$ for $\phi_k$ is based on Assumption \ref{assum: block_definable} and Proposition 2 of \cite{bolte2021conservative}.
\end{remark}

Since we have established that the HS-Jacobian is a conservative mapping for the projection operator, it can be seamlessly integrated into the nonsmooth AD algorithm. Moreover, given $(x,y)$, for each $(\theta_1,\cdots,\theta_p)$, the outputs of Algorithm \ref{alg: reverse_mode_AD} form a so-called AD field $\mathcal{D}_{\Phi(\cdot;x,y)}: \mathbb{R}^p \rightrightarrows \mathbb{R}^p$,
\begin{equation*}
\begin{aligned}
    \mathcal{D}_{\Phi(\cdot;x,y)}(\theta) := \{g\in \mathbb{R}^p ~|~ \text{all possible outputs} \\ 
    \text{ of Algorithm \ref{alg: reverse_mode_AD} given }(x,y) \}.
\end{aligned}
\end{equation*}
Without ambiguity, we abbreviate $\mathcal{D}_{\Phi(\cdot;x,y)}$ for $\mathcal{D}_{\Phi}(\cdot)$. By Theorem 8 of \cite{bolte2021conservative}, the AD field $\mathcal{D}_{\Phi}$ is a conservative field for the parameter function $\Phi$ defined by Algorithm \ref{alg: definition_program}. To analyze the convergence of Adam optimizer for linearly constrained DNNs \cite{kingma2015adam}, we further specify the selection of conservative mappings in Algorithm \ref{alg: reverse_mode_AD}. 

First, consider the elementary functions $\phi_k$ in Algorithm \ref{alg: definition_program} that correspond to the projection operator $\Pi_{\mathcal{P}}: \mathbb{R}^n \to \mathbb{R}^n$. Without loss of generality, suppose $\phi_{k+1}, \cdots, \phi_{k+n}$ are the functions associated with this operator. We set $D_{k+1}, \cdots, D_{k+n}$ to be the rows of the HS-Jacobian $\partial_{HS}\Pi_{\mathcal{P}}$, which is a conservative mapping by Theorem \ref{thm: semi_algebraic_Jacobian}. By Lemma 4 of \cite{bolte2021conservative}, each row $D_{k+i}$ remains a conservative field for the corresponding $\phi_{k+i}~(i=1,\cdots,n)$. Furthermore, by Theorem \ref{thm: semi_algebraic_Jacobian} and the Tarski--Seidenberg theorem (see Corollary \ref{corol: projection_semialgebraic} in section \ref{sec: appendix_pre}), the conservative fields $D_{k+1},\cdots, D_{k+n}$ are semi-algebraic.

For other elementary functions, we adopt the following assumption on the choice of their conservative fields $D_k$, which is also adopted in \cite{bolte2021conservative, xiao2024adam}.

\begin{assumption}
\label{assum: mapping_is_clarke_subdifferential}
    The conservative field $D_k$ for function $\phi_k$ is taken to be its Clarke subdifferential whenever $\phi_k$ is not a coordinate of a projection operator.
\end{assumption}

The reasons for choosing the Clarke subdifferential of $\phi_k$ as its conservative mapping $D_k$ are multi-fold. First, by Proposition 4 of \ref{sec: appendix_pre}, if a function admits a conservative field, then its Clarke subdifferential would be its conservative mapping. The second reason is that the Clarke subdifferential of a definable function remains definable \cite{bolte2007clarke}. Moreover, in contrast to the difficulty of finding an element in the Clarke Jacobian of $\Pi_{\mathcal{P}}(\cdot)$, it is computationally efficient to find an element in the Clarke Jacobian of other elementary functions in DNNs. With this selection rule, we can ensure all the conservative fields $D_k$ in Algorithm \ref{alg: reverse_mode_AD} are definable.

A fundamental result in model theory \cite{marker2002model} states that definability is preserved under finite composition \cite[Lemma 6.1 and Remark 6.7]{denkowski2017definable}, multiplication, and summation \cite{xiao2024adam}. Thus, under Assumption \ref{assum: block_definable}, the function $\Phi(\theta;x,y)$ defined by Algorithm \ref{alg: definition_program} is definable, as it is a finite composition of definable functions \cite{davis2020stochastic}. Moreover, since all selected $D_k$ related to $\phi_k$ are definable, the AD field $\mathcal{D}_{\Phi}$ generated via summation and multiplication according to Algorithm \ref{alg: reverse_mode_AD} is also definable \cite{bolte2021conservative, xiao2024adam}.

\subsection{The convergence of training DNNs by HS-Jacobian based backpropagation}

We consider training a DNN equipped with projection layers on a dataset $\Omega=\{(x^{(t)}, y^{(t)})\}_{t=1}^N$. For each data point $(x^{(t)}, y^{(t)}) \in \Omega$, define $\varphi_t(\theta)=\Phi(\theta;x^{(t)}, y^{(t)})$. The optimization problem for training the DNN can be expressed as 
\begin{equation}
\label{eq: empirical_objective_function}
    \min_{\theta \in \mathbb{R}^p} \quad \varphi(\theta) :=\frac{1}{N} \sum_{t=1}^N \varphi_t(\theta).
\end{equation}
This corresponds to minimizing the expectation over the probability space $(\Omega, \mathcal{F}, \mathbb{P})$, where $\mathcal{F}=2^{\Omega}$ is the $\sigma$-algebra and $\mathbb{P}$ is the discrete uniform probability. Based on the AD field $\mathcal{D}_{\Phi}$, define a set-valued mapping
\begin{equation*}
    \mathcal{D}: \mathbb{R}^p \times \Omega \rightrightarrows \mathbb{R}^p, \quad (\theta,x,y) \mapsto \mathcal{D}_{\Phi(\cdot;x,y)}(\theta).
\end{equation*}

We further impose the following mild assumptions commonly used in the convergence analysis of Adam-type algorithms \cite{xiao2024adam}.

\begin{assumption}
\label{assum: conservative_mapping_setting}
    \begin{enumerate}
        \item There exists a measurable selection $\chi: \mathbb{R}^p \times \Omega \to \mathbb{R}^p$ such that $\chi(\theta,x,y) \in \mathcal{D}(\theta,x,y)$ for all $\theta \in \mathbb{R}^p$ and $(x,y) \in \Omega$.
        \item There exists a constant $M_{\Omega} > 0$ such that 
        \begin{equation*}
            \sup_{d \in \mathcal{D}(\theta,x,y)} \| d\| \le M_{\Omega}
        \end{equation*}
        for all $\theta \in \mathbb{R}^p$ and $(x,y) \in \Omega$.
    \end{enumerate}
\end{assumption}

The details of the Adam optimizer for \eqref{eq: empirical_objective_function} are described by the following algorithm. Note that the vector exponentiation and $\odot$ are element-wise operators in this algorithm.

\begin{algorithm}[!h]
\caption{Adam optimizer for \eqref{eq: empirical_objective_function}}
\label{alg: Adam_optimizer}
\renewcommand{\algorithmicrequire}{\textbf{Input:}}
\renewcommand{\algorithmicensure}{\textbf{Output:}}
\begin{algorithmic}[1]
\REQUIRE Initial points $\theta_0\in \mathbb{R}^p, m_0\in \mathbb{R}^p, v_0\in \mathbb{R}^p$, parameters $\tau_1,\tau_2, \varepsilon >0$. 
\STATE Set $t=0$.
\WHILE{not satisfied termination condition}
\STATE Independently choose $(x^{(t)}, y^{(t)}) \in \Omega$.
\STATE Compute $g_t=\chi(\theta_t, x^{(t)}, y^{(t)}) \in \mathcal{D}(\theta_t,x^{(t)}, y^{(t)})$.
\STATE Choose the stepsize $\eta_t$.
\STATE $m_{t+1} = (1-\tau_1\eta_t)m_t + \tau_1 \eta_t g_t.$
\STATE $v_{t+1} = (1-\tau_2\eta_t) v_t + \tau_2\eta_t g_t\odot g_t.$
\STATE Compute the scaling parameters $\rho_{m,t+1}$ and $\rho_{v,t+1}$.
\STATE $\theta_{t+1} = \theta_t-\eta_t (\rho_{v,t+1} |v_{t+1}| +\varepsilon) ^{-\frac{1}{2}} \odot \rho_{m,t+1} m_{t+1}.$
\STATE $t=t+1$.
\ENDWHILE
\ENSURE $\theta_t \in \mathbb{R}^p$.
\end{algorithmic}
\end{algorithm}

In the convergence analysis of Algorithm \ref{alg: Adam_optimizer}, we impose the following mild assumptions on the hyperparameter settings and the sequence boundedness. These assumptions are standard in the literature and have also been adopted in \cite{xiao2024adam}.

\begin{assumption}
\label{assum: adam_algorithm_setting}
    \begin{enumerate}
        \item The sequence $\{\theta_t\}$ is almost surely bounded.
        \item The sequence of step sizes $\{\eta_t\}$ satisfies
        \begin{equation*}
            \eta_t >0, \quad \sum_{t=0}^{+\infty} \eta_t=+\infty, \quad \lim_{t \to +\infty} \eta_t \log(t) = 0.
        \end{equation*}
        \item The scaling parameters $\{\rho_{m,t}\}$ and $\{\rho_{v,t}\}$ satisfy
        \begin{equation*}
            \lim_{t \to +\infty} \rho_{m,t} = 1, \quad \lim_{t \to +\infty } \rho_{v,t} = 1.
        \end{equation*}
    \end{enumerate}
\end{assumption}

Based on Corollary 1 of \cite{xiao2024adam}, we have the convergence of Algorithm \ref{alg: Adam_optimizer}.
\begin{theorem}
\label{thm: adam_convergence}
    For any sequence $\{\theta_t\}$ generated by Algorithm \ref{alg: Adam_optimizer} with parameters satisfying $\tau_2 \le 4\tau_1$, if Assumption \ref{assum: block_definable}, \ref{assum: mapping_is_clarke_subdifferential}, \ref{assum: conservative_mapping_setting}, \ref{assum: adam_algorithm_setting} hold, then almost surely, every cluster point of $\{\theta_t\}$ is a $\mathcal{D}_{\varphi}$-critical point of the loss function $\varphi$ \eqref{eq: empirical_objective_function} and the sequence $\{\varphi(\theta_t)\}$ converges, where $\mathcal{D}_{\varphi}$ is the  convex conservative mapping for $\varphi$
    \begin{equation}
    \label{eq: conv_of_conservative_for_loss}
        \mathcal{D}_{\varphi}: \mathbb{R}^p \rightrightarrows \mathbb{R}^p, \quad \theta \mapsto \frac{1}{N} \mathrm{conv}\left(\sum_{t=1}^N \mathcal{D}_{\varphi_t}(\theta) \right).
    \end{equation}
\end{theorem}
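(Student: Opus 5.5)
The plan is to reduce Theorem \ref{thm: adam_convergence} to Corollary 1 of \cite{xiao2024adam}, which, under the same step-size and scaling assumptions, the same condition $\tau_2 \le 4\tau_1$, and the same almost-sure boundedness of the iterates, yields the conclusion for any objective $\varphi = \frac{1}{N}\sum_t \varphi_t$ that satisfies three conditions. First, each $\varphi_t$ must be locally Lipschitz and definable. Second, each $\varphi_t$ must admit a definable conservative field $\mathcal{D}_{\varphi_t}$ from which the stochastic oracle draws through a measurable, uniformly bounded selection. Third, the conservative fields must have the graph regularity that \cite{xiao2024adam} asks of them. So the proof consists of checking these hypotheses for our setting, with the only new ingredient being the projection layer.

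Step one is regularity of $\varphi_t$. Under Assumption \ref{assum: block_definable}, every $\phi_k$ produced by Algorithm \ref{alg: definition_program} is locally Lipschitz and definable. Finite composition preserves both properties, so each $\varphi_t = \Phi(\cdot;x^{(t)},y^{(t)})$ is locally Lipschitz and definable, and hence so is the finite average $\varphi$.

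Step two is conservativity and definability of the AD field. For the coordinates of $\Pi_{\mathcal{P}}$, we use the rows of $\partial_{HS}\Pi_{\mathcal{P}}$. Theorem \ref{thm: semi_algebraic_Jacobian} gives that these rows are conservative (via Lemma 4 of \cite{bolte2021conservative}) and semi-algebraic (via Tarski--Seidenberg), so they are definable in $\mathbb{R}_{\mathrm{exp}}$. By Assumption \ref{assum: mapping_is_clarke_subdifferential}, the field of every other $\phi_k$ is its Clarke subdifferential. That subdifferential is conservative by Proposition 4 together with the existence of a conservative field (Proposition 2 of \cite{bolte2021conservative}), and it is definable by \cite{bolte2007clarke}. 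Theorem 8 of \cite{bolte2021conservative} then shows that the AD field $\mathcal{D}_{\varphi_t}$ output by Algorithm \ref{alg: reverse_mode_AD} is conservative for $\varphi_t$. It is definable because it is built from definable maps by finitely many sums and products.

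The remaining hypotheses come from Assumption \ref{assum: conservative_mapping_setting}, which supplies measurability of the selection and the uniform bound $M_\Omega$. Since $\Omega$ is finite with uniform probability, the expectation reduces to a finite average. Because sums of conservative fields are conservative for the sum, and taking the convex hull preserves both conservativity and definability, $\mathcal{D}_\varphi$ in \eqref{eq: conv_of_conservative_for_loss} is a convex-valued conservative field for $\varphi$. Applying Corollary 1 of \cite{xiao2024adam} then gives the theorem: almost surely every cluster point is $\mathcal{D}_\varphi$-critical and $\{\varphi(\theta_t)\}$ converges. The nonsmooth-descent argument behind this (a definable Sard-type result showing the critical values of $\varphi$ with respect to $\mathcal{D}_\varphi$ form a finite set, so $\varphi$ is a Lyapunov function for the limiting differential inclusion) is inherited from that work.

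The main obstacle I expect is verifying that the fields have closed graphs and nonempty compact values, which stochastic approximation arguments usually require. The HS-Jacobian, as a finite union of matrices $J_K$, need not have a closed graph a priori. My first attempt is to show graph-closedness directly. Suppose $x^j \to x$ with $K \in \mathcal{E}(x^j)$ held fixed along a subsequence (possible since there are finitely many $K$). Then $K \subseteq I(x)$ by continuity of $\Pi_{\mathcal{P}}$. The multipliers supported on $K$ are unique, because $H_K$ has full column rank, and they depend continuously on $x$, so they converge to a multiplier in $M(x)$ with support contained in $K$. Hence $K \in \mathcal{E}(x)$. If this turns out not to be needed, we instead rely on Corollary 1 of \cite{xiao2024adam} requiring only definability and conservativity, since any conservative field can be replaced by its graph closure without losing either property.
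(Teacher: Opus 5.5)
Your proposal is correct and follows essentially the same route as the paper: reduce to Corollary 1 of \cite{xiao2024adam} by checking that each $\varphi_t$ and its AD field $\mathcal{D}_{\varphi_t}$ are definable and conservative, that the convexified average $\mathcal{D}_{\varphi}$ inherits both properties, and that Assumptions \ref{assum: conservative_mapping_setting} and \ref{assum: adam_algorithm_setting} supply the remaining hypotheses. The paper also explicitly verifies the measurability of $\mathcal{D}$ and $\Psi$ (via closed graphs, continuity, and finiteness of $\Omega$) and the empty-interior critical-value condition (via Lemma \ref{lemma: nonsmooth_Morse_Sard}), and the graph-closedness you flag as the main obstacle is already part of Theorem \ref{thm: semi_algebraic_Jacobian}, proved there through Lemma \ref{prop: HS_Jacobian_semismooth}; your multiplier-continuity argument is a valid direct alternative to that step.
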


Detailed proofs are provided in section \ref{sec: appendix_thm_3}. This result demonstrates that the proposed HS-Jacobian based backpropagation method preserves the convergence properties of Adam when training DNNs with linear constraints, thereby providing a rigorous theoretical guarantee for its deployment in applications.

\section{Numerical experiments}

In this section, we test the performance of the proposed algorithms for training linearly constrained DNNs through three real applications in finance, computer vision, and network architecture design. The numerical results will demonstrate our proposed HS-Jacobian based algorithm can achieve superior performance for training linearly constrained DNNs in terms of feasibility, training loss, model performance, and memory cost, compared to other popular methods \cite{wang2023linsatnet, xie2025mhc}.

\subsection{Portfolio allocation}

Portfolio allocation is one of the most important problems in finance. Some basic constraints in asset allocation include that each asset is assigned a nonnegative weight and the sum of all weights equals one \cite{jagannathan2003risk}. In practice, it is often beneficial to introduce expert preference to ensure sufficient holdings in promising assets since those assets may yield high long-term returns. For a predicted allocation weight vector $w=(w_1,\cdots,w_n)^\top\in \mathbb{R}^n$, the constraints on $w$ can then be formulated as
\begin{equation}
\label{eq: cons_portfolio}
    \sum_{i=1}^n w_i = 1, \quad \sum_{i \in \mathcal{C}}w_i \ge \delta, \quad w\ge 0,
\end{equation}
where $\mathcal{C}$ is a collection of assets chosen by experts and $\delta$ is the minimum total weight allocated to those assets. 

The StemGNN \cite{Cao2020spectral} is widely adopted as the backbone network to jointly predict future asset returns and portfolio weights in DNN based portfolio allocation, where the linear constraints \eqref{eq: cons_portfolio} are imposed on the predicted portfolio weights. The loss function is defined as a weighted sum of the prediction error on future asset returns and the Sharp ratio \cite{sharpe1966mutual, sharpe1994sharpe}. We compare our algorithm, which trains this linearly constrained DNN model directly, to a representative unrolling-based method LinSATNet \cite{wang2023linsatnet}.  The dataset consists of daily prices of 493 assets from S\&P 500 index (i.e. $n=493$).

\begin{figure}[htbp]
    \centering
    \includegraphics[width=0.8\linewidth]{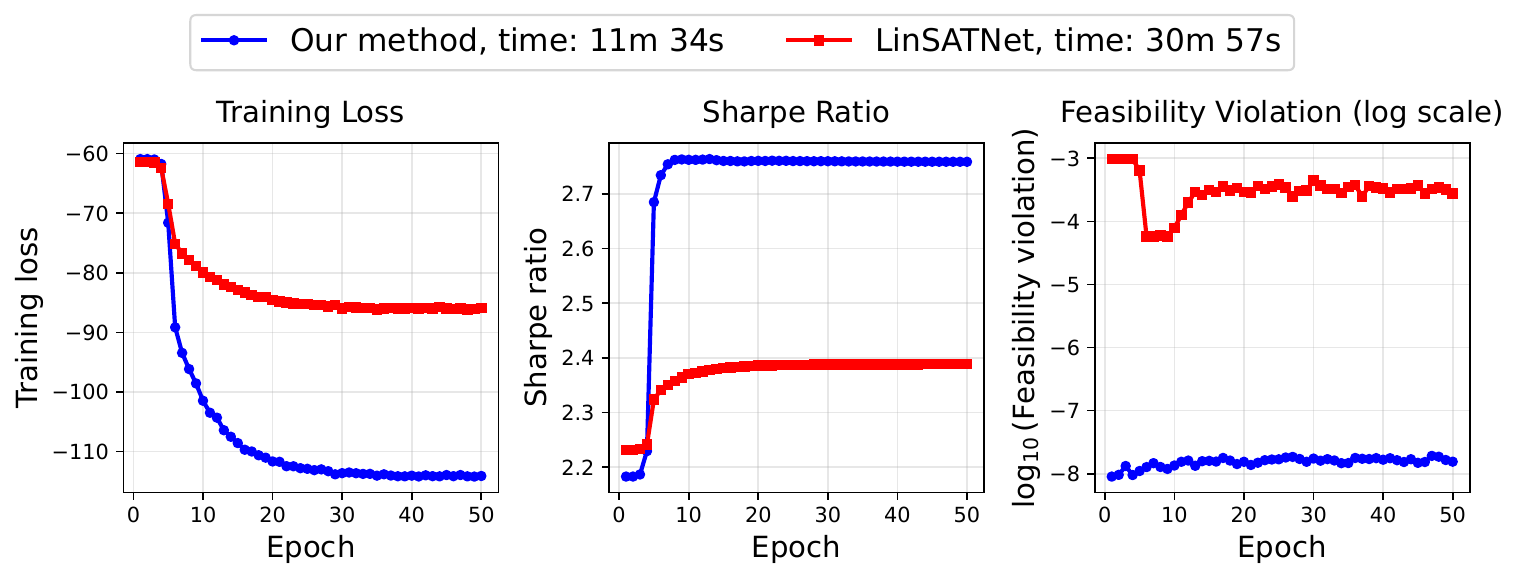}
    \caption{The comparison of our method and LinSATNet in portfolio allocation. The left figure shows the average training loss over each epoch. The middle figure shows the average Sharpe ratio on the test set. The right figure shows the average base-10 logarithm of feasibility violation during training.}
    \label{fig: portfolio_all}
\end{figure}

The numerical results are presented by Fig.~\ref{fig: portfolio_all}. These results show that our method trained with HS-Jacobian based backpropagation consistently achieves superior performance, lower training loss, and significantly smaller feasibility violations compared with LinSATNet. Besides, it also requires less training and evaluation time. More experimental settings are detailed in section \ref{sec: appendix_portfolio}. 

\subsection{Partial graph matching}

Partial graph matching considers the scenario where only a subset of nodes in one graph can be matched to a subset of nodes in another graph \cite{wang2021neural}. Let $\mathcal{G}_1=(\mathcal{V}_1,\mathcal{E}_1), \mathcal{G}_2=(\mathcal{V}_2, \mathcal{E}_2)$ be two graphs with $|\mathcal{V}_1|=d_1, |\mathcal{V}_2|=d_2$. Denote the matching score between node $i$ in $\mathcal{G}_1$ and node $j$ in $\mathcal{G}_2$ by $X_{ij} \in [0,1]$. Then the resulting matching score matrix $X=(X_{ij})_{d_1 \times d_2}$ needs to satisfy the following constraints
\begin{equation}
\label{eq: cons_graph}
    \begin{aligned}
        & \sum_{j=1}^{d_2} X_{ij} \le 1, \forall i, \quad \sum_{i=1}^{d_1} X_{ij} \le 1, \forall j, \\
        & \sum_{i=1}^{d_1} \sum_{j=1}^{d_2} X_{ij} \le \alpha, \quad X \ge 0,
    \end{aligned}
\end{equation}
where $\alpha $ is a prescribed upper bound on the number of matched node pairs.

In the experiments, we employ the network used in \cite{wang2023linsatnet} to predict the matching scores of two graphs with the linear constraints specified in \eqref{eq: cons_graph}. The training loss is the binary cross-entropy between the ground truth permutation matrix and $X$. Again, we compare our algorithm, which trains this linearly constrained DNN model directly, to a representative unrolling-based method LinSATNet \cite{wang2023linsatnet}. The experiments are conducted on the Pascal VOC Keypoint dataset \cite{everingham2010pascal} with Berkeley annotations \cite{bourdev2009poselets}, following the same setting as in \cite{rolinek2020deep, wang2023linsatnet}. We train the whole model from scratch. During training, we set $\alpha$ equal to the ground truth number of matched nodes. When evaluating the model performance, the Hungarian algorithm \cite{kuhn1955hungarian} is applied to the predicted matching score matrix $X$ to obtain a permutation matrix, from which the F1 score is computed based on the ground truth permutation matrix.

\begin{figure}[htbp]
    \centering
    \includegraphics[width=0.8\linewidth]{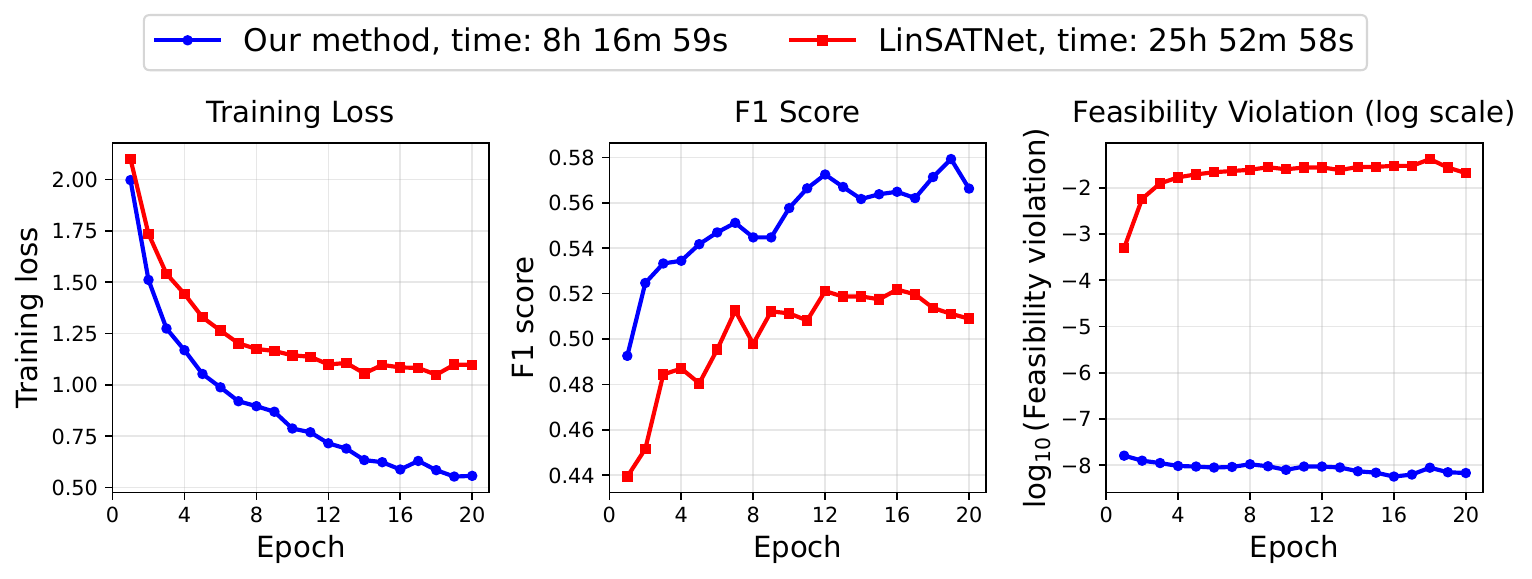}
    \caption{The comparison of our method (trained with the HS-Jacobian based backpropagation) and LinSATNet in graph matching. The left figure represents the average training loss over each epoch. The middle figure represents the average F1 score on test set. The right figure represents the average base-10 logarithm of feasibility violation during training. We also display the total training and evaluation time on the top.}
    \label{fig: graph_all}
\end{figure}

\begin{figure*}[tbp]
    \centering
    \includegraphics[width=1\textwidth]{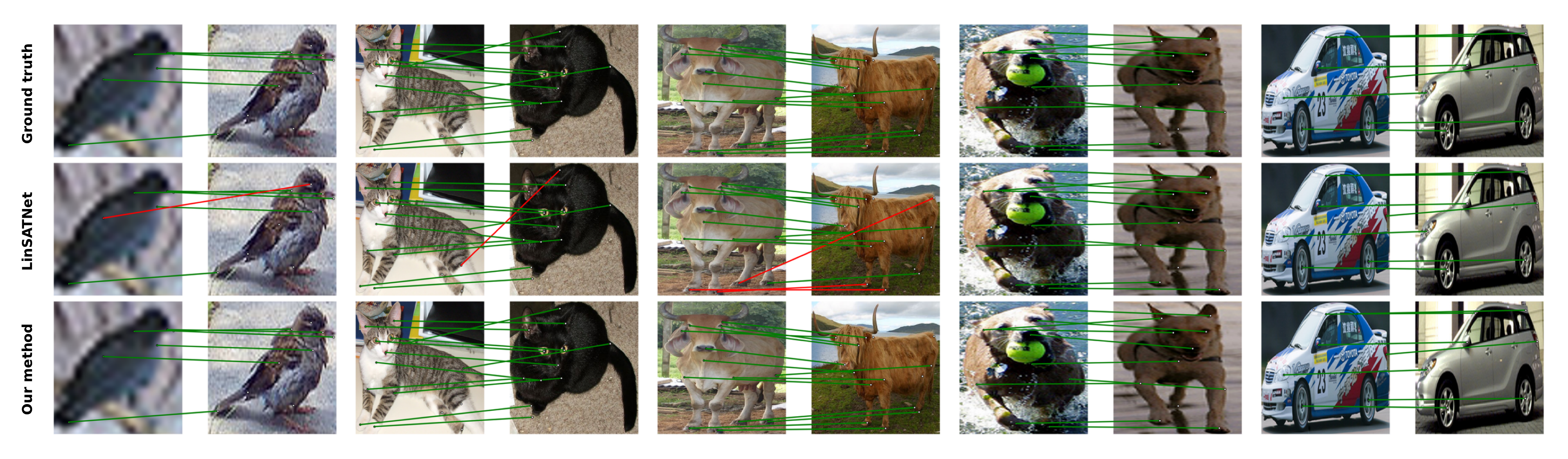}
    \caption{Visualization of some graph matching results by LinSATNet and our method on Pascal VOC Keypoint dataset. The green lines represent correct matching. The red lines represent incorrect matching. The images in the same column are chosen from the same category. The categories are bird, cat, cow, dog, and car.}
    \label{fig: graph_visual}
\end{figure*}

As shown in Fig.~\ref{fig: graph_all}, compared to LinSATNet, our proposed method can achieve a faster loss decrease with much less training time. It also delivers superior model performance and maintains significantly higher feasibility. Some selected graph matching results are visualized in Fig.~\ref{fig: graph_visual}. More experimental settings and visualization results are detailed in section \ref{sec: appendix_graph}. 

\subsection{Manifold-constrained hyper-connection}

The recently introduced Hyper-Connections (HC) architecture~\cite{zhu2024hyper} generalizes the deep residual network~\cite{he2016deep} and achieves success in many applications. Specifically, HC employs a hyper-parameter $c$ to expand the input $x \in \mathbb{R}^{d}$ into a multi-path representation $X = \mathbf{1}_c x^\top \in \mathbb{R}^{c \times d}$. The HC then modulate the information flow across these $c$ paths to obtain the output $y \in \mathbb{R}^{d}$ as
\begin{equation*}
\label{eq:hyperconn}
y^\top = \mathbf{1}^{\top}_{c} \left( H^{\mathrm{res}} X + H^{\mathrm{post}} \mathcal{F} \left( \left( H^{\mathrm{pre}} X\right)^{\top}, \theta \right)^\top \right),
\end{equation*}
where $H^{\mathrm{res}} \in \mathbb{R}^{c \times c}$, $H^{\mathrm{pre}} \in \mathbb{R}^{1 \times c}$, and $H^{\mathrm{post}} \in \mathbb{R}^{c \times 1}$ are learnable weights. 

Despite its flexibility, it has been observed to be unstable during the training of Large Language Models with the HC architecture~\cite{xie2025mhc}. To address this, Xie et al.~\cite{xie2025mhc} proposed the Manifold-constrained HC (mHC) by constraining $H^{\mathrm{res}}$ in the Birkhoff polytope
\begin{equation*}
\mathcal{P}_B=\{H\in \mathbb{R}^{c \times c}~|~ H\mathbf{1}_{c} = \mathbf{1}_{c}, H^{\top}\mathbf{1}_{c} = \mathbf{1}_{c}, H \ge 0\}.
\end{equation*}
The mHC architecture has been used in the very recent released DeepSeek V4 model~\cite{deepseekai2026deepseekv4}, which has achieved comparable performance to the GPT 5.4 model across multiple benchmark tests. 

To evaluate the effectiveness of our method relative to the Sinkhorn algorithm used in mHC~\cite{xie2025mhc}, we conduct image classification experiments using a Vision Transformer (ViT)~\cite{dosovitskiy2021an} with $c=8$ on the ImageNet dataset~\cite{deng2009imagenet}. As illustrated in Fig.~\ref{fig: vit_train_loss}, our method consistently maintains a slightly lower loss trajectory. We attribute the subtle nature of this gap to the fact that both methods successfully stabilize gradient flows by approximating the Birkhoff polytope, which is the primary driver of training stability. Nevertheless, the persistent improvement achieved by our method highlights the benefits of eliminating approximation errors, ensuring a more precise adherence to the manifold throughout the optimization process. This improvement is further evidenced by the Top-1 validation accuracy, as illustrated in Fig.~\ref{fig: vit_valid_acc}. Although the absolute performance margin is constrained by the model's capacity and the condensed training schedule, our proposed method consistently establishes a superior performance compared to the Sinkhorn baseline.

Beyond optimization precision, our method offers significant memory efficiency as demonstrated in Table~\ref{tab:memory}. The memory surge in the Sinkhorn approach is primarily attributed to the necessity of caching intermediate variables across multiple iterations to facilitate backpropagation. In contrast, our method circumvents this overhead, allowing for larger batch sizes without resorting to gradient checkpointing~\cite{chen2016training}, a technique that typically compromises training speed to save memory. This efficiency enables our method to handle larger-scale data within the same hardware constraints. 
\begin{table}[t]
    \caption{Peak memory consumption of different methods.}
    \centering
    \begin{tabular}{cccc}
    \toprule
    Train Batchsize & Method & Sinkhorn Iteration & Memory (MB) \\
    \midrule
    \multirow{3}{*}{128} & Our Method &  $\backslash$   &  64,190 \\
    \cline{2-4}
    & \multirow{2}{*}{Sinkhorn} & 20 &  72,246   \\ 
    &                           & 30 &  75,786   \\
    \bottomrule
    \end{tabular}
    \label{tab:memory}
\end{table}

Comprehensive hyperparameters for the ViT model configuration and training are detailed in section \ref{sec: appendix_mhc}. For the results shown in Fig.~\ref{fig: vit_train_loss} and \ref{fig: vit_valid_acc}, we set the Sinkhorn iteration to 20, consistent with the mHC setup~\cite{xie2025mhc}. Notably, our proposed method outperforms mHC even when the latter is configured with 30 Sinkhorn iterations, as further demonstrated in section \ref{sec: appendix_mhc}.

\begin{figure}[t]
    \centering
    \begin{subfigure}[b]{0.45\textwidth}
        \centering
        \includegraphics[width=\textwidth]{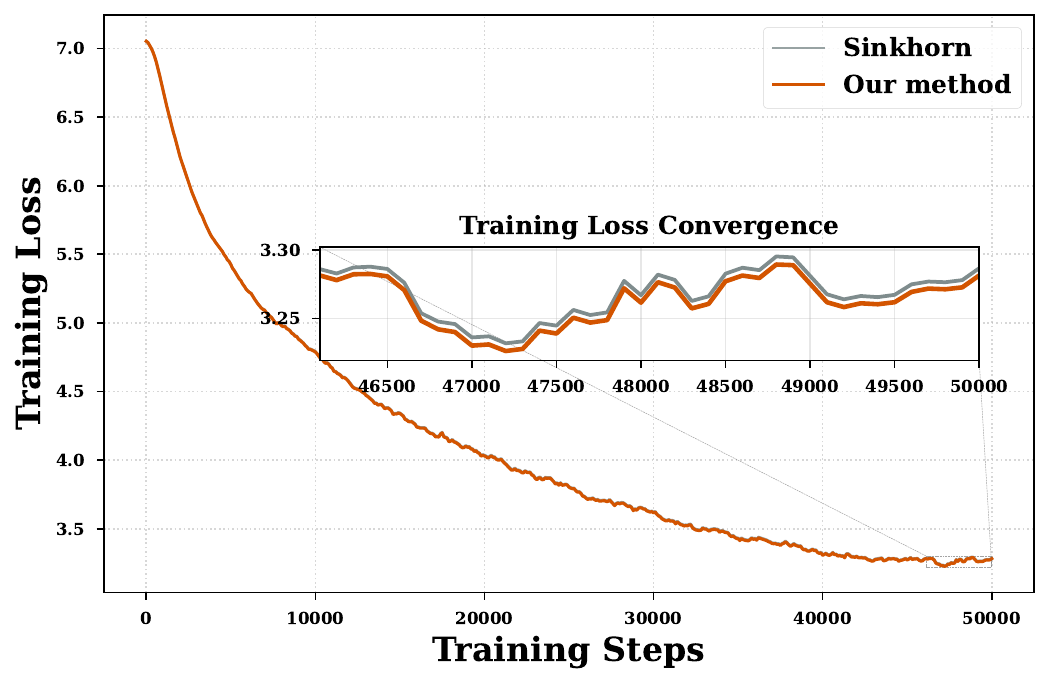}
        \caption{Comparison of training loss trajectories.}
        \label{fig: vit_train_loss}
    \end{subfigure}
    \hfill
    \begin{subfigure}[b]{0.45\textwidth}
        \centering
        \includegraphics[width=\textwidth]{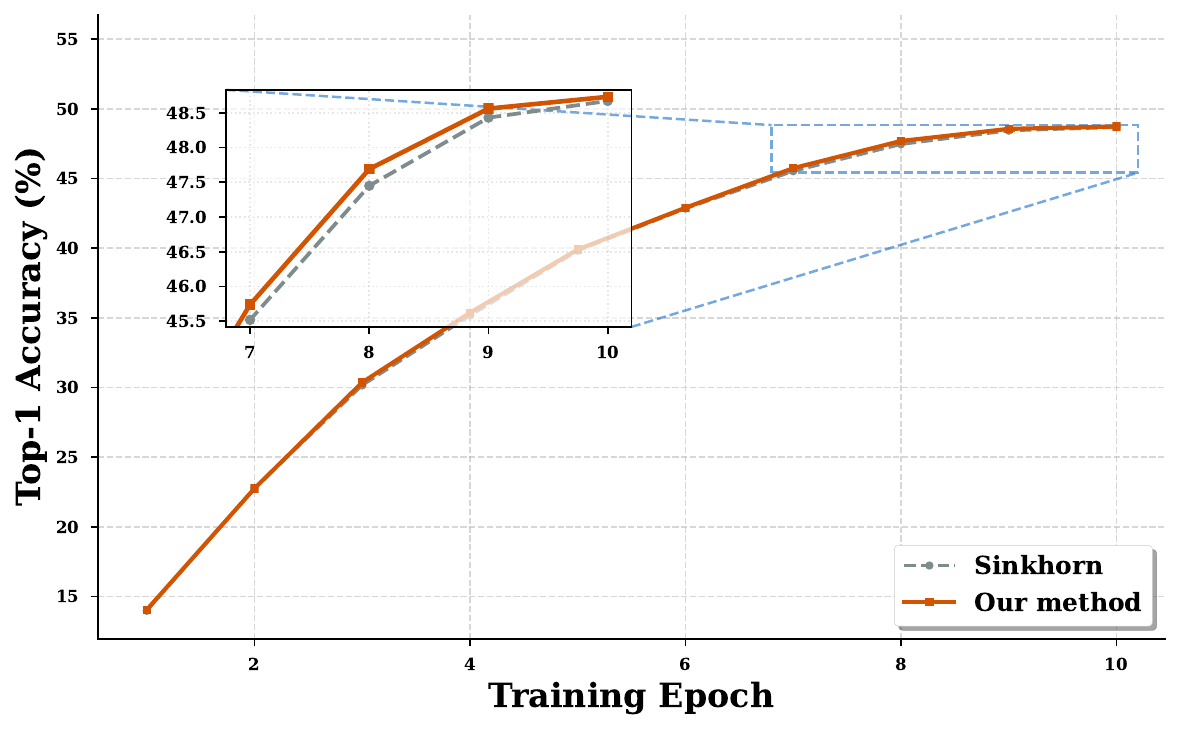}
        \caption{Comparison of Top-1 validation accuracy.}
        \label{fig: vit_valid_acc}
    \end{subfigure}
    \caption{Training loss and validation accuracy comparison on ImageNet.}
    \label{fig: vit_main}
\end{figure}

\section{Conclusion and discussion}

In this paper, we propose the HS-Jacobian based backpropagation method for training DNNs with linear constraints. The forward pass of this layer can be computed by well-developed QP solvers, while the backpropagation relies on the HS-Jacobian of the projection operator. We prove that the HS-Jacobian is a conservative mapping for the projection operator over the polytope. Based on the established conservativity of HS-Jacobian, we further prove the convergence of the HS-Jacobian based Adam algorithm for training linearly constrained DNNs. Extensive numerical experiment results demonstrate the superior performance of the proposed method, compared to the popular unrolling methods. For a potential future research topic, we will focus on the theory and efficient backpropagation implementations for training DNNs with more general constraints. 

\section*{Acknowledgments}

The authors thank Professor Defeng Sun at The Hong Kong Polytechnic University for insightful discussions.


\bibliography{references}{}
\bibliographystyle{siam}

\appendix

\newpage

\section{Preliminaries}
\label{sec: appendix_pre}

\subsection{Semi-algebraic mapping}

\begin{definition}[Semi-algebraic set]
\label{def: semi_algebraic_set}
    A set is said to be semi-algebraic if it is a finite union of sets with the form
    \begin{equation*}
        \bigcap_{i=1}^s ~ \{x\in \mathbb{R}^n ~|~ p_i(x)<0, q_i(x)=0 \},
    \end{equation*}
    where the functions $p_i, q_i: \mathbb{R}^n \to \mathbb{R}$ are real polynomials and $s\ge1$.
\end{definition}

\begin{definition}[Semi-algebraic mapping]
    Let $X\subset \mathbb{R}^n$ and $Y \subset \mathbb{R}^m$ be two semi-algebraic sets, a set-valued mapping $f:X \rightrightarrows Y$ is said to be semi-algebraic if its graph
    \begin{equation*}
        \mathrm{graph}(f) := \{(x,y) ~|~ x\in \mathbb{R}^n, y\in f(x)\}
    \end{equation*}
    is a semi-algebraic set of $\mathbb{R}^n \times \mathbb{R}^m$.
\end{definition}

A useful tool to prove a set is semi-algebraic is the following Tarski-Seidenberg Theorem \cite{bierstone1988semianalytic}, which was summarized from \cite{tarski1952decision, seidenberg1954new}

\begin{lemma}[Theorem 1.5 of \cite{bierstone1988semianalytic}]
\label{lemma: Seidenberg_Tarski}
    Let $X \subseteq \mathbb{R}^{n+1}$ be a semi-algebraic set and $\pi: \mathbb{R}^{n+1} \to \mathbb{R}^n$, the projection on the first $n$ coordinates. Then $\pi(X)$ is a semi-algebraic subset of $\mathbb{R}^n$.
\end{lemma}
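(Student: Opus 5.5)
Since projection commutes with finite unions, and every semi-algebraic set is a finite union of basic sets $\{p_1<0,\dots,p_s<0,\ q_1=0,\dots,q_s=0\}$, I will first reduce to the case where $X$ is a single basic set. Writing a point of $\mathbb{R}^{n+1}$ as $(x,t)$ with $x\in\mathbb{R}^n$ and $t\in\mathbb{R}$, each $p_i,q_i$ is a polynomial in $t$ whose coefficients are polynomials in $x$. Then
\begin{equation*}
\pi(X)=\{x\in\mathbb{R}^n ~|~ \exists\, t\in\mathbb{R}:\ \mathrm{sign}\,p_i(x,t)=-1,\ \mathrm{sign}\,q_i(x,t)=0\ \forall i\}.
\end{equation*}
The goal is to show that whether a prescribed sign condition on a finite family $\mathcal{F}=\{f_1,\dots,f_r\}\subset\mathbb{R}[x][t]$ is realized by some $t$ is determined by the signs, at $x$, of finitely many polynomials in $x$ alone. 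That is exactly a quantifier-free description of $\pi(X)$, and hence proves the lemma.

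The tool is the Cohen--H\"ormander ``sign table'' argument, run uniformly in the parameter $x$. I partition $\mathbb{R}^n$ into finitely many semi-algebraic pieces according to the signs of the coefficients of the $f_j$. On each piece the $t$-degrees of the $f_j$ are constant, so the leading coefficients are nonvanishing there. Next I enlarge $\mathcal{F}$ to a family $\mathcal{F}^\ast$ that is closed under taking $t$-derivatives and under pseudo-remainders $\mathrm{lc}(g)^k f \bmod g$ with respect to members $g$ of lower degree. This closure is finite, by induction on the maximal degree. I then prove the key combinatorial claim by induction on the size of the family and on the maximal degree. For $x$ fixed in a piece, the ordered list of real roots of all members of $\mathcal{F}^\ast(x,\cdot)$ is determined by the signs at $x$ of the degree-zero members (polynomials in $x$ only). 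The same holds for the sign vector of $\mathcal{F}^\ast$ at each root and on each open interval between consecutive roots. The inductive step removes a polynomial $f$ of top degree. The signs of $f$ at the roots of its derivative $f'$ and of the other members follow from the remainder identities $\mathrm{lc}(g)^k f = Qg + R$, which give $\mathrm{sign} f = \mathrm{sign} R$ up to the known sign of $\mathrm{lc}(g)$ at a root of $g$. The signs at $\pm\infty$ follow from the leading coefficient. By monotonicity of $f$ between consecutive roots of $f'$ (intermediate value theorem), this data locates the roots of $f$ and fills in the table.

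Once this claim is established, the condition ``some $t$ realizes the sign condition'' becomes a finite Boolean combination of sign conditions on polynomials in $x$. So $\pi(X)$ is a finite union of basic semi-algebraic sets intersected with the pieces of the partition, which is semi-algebraic. Alternatively, one could invoke a cylindrical algebraic decomposition adapted to $\{p_i,q_i\}$. The projection of a cylinder cell is a cell of the base decomposition, so $\pi(X)$ is a union of semi-algebraic cells. Constructing that decomposition, however, requires the same uniform root-counting machinery (subresultants or Sturm sequences with parameters).

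I expect the main obstacle to be the inductive sign-table step. The difficulty is keeping everything uniform in $x$: the degrees of pseudo-remainders can drop when leading coefficients vanish, which is why the preliminary partition by coefficient signs and the use of pseudo-remainders instead of true division are essential. I would first set up the induction carefully on the pair (maximal degree, number of polynomials of maximal degree), check that both the derivative operation and the remainder operation strictly decrease this pair, and only then handle the bookkeeping that recovers the sign of $f$ on the intervals.
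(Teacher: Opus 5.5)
The paper does not prove this lemma. It quotes the Tarski--Seidenberg theorem from Bierstone--Milman and uses it only as a black box, through the corollary for projections $\mathbb{R}^{n+k}\to\mathbb{R}^n$ (which is just $k$ applications of the lemma), in the semi-algebraicity arguments for $\Pi_{\mathcal{P}}$ and the HS-Jacobian. You instead give a self-contained proof by the classical Cohen--H\"ormander elimination. You reduce to a single basic set and show that the full sign table of a family closed under $t$-derivatives and pseudo-remainders is a function of the signs of finitely many polynomials in $x$. You then read off $\pi(X)$ as a finite union of sign-condition sets. This route is correct and yields more than the citation does: an explicit, computable quantifier-free description of $\pi(X)$. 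The cost is bookkeeping that the paper sensibly avoids.

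The bookkeeping you flag needs one more ingredient than you state. Partitioning $\mathbb{R}^n$ by the signs of the coefficients of the original $f_j$ does not fix the $t$-degrees of the derived members of $\mathcal{F}^\ast$. Where the formal leading coefficient of a divisor $g$ vanishes, the identity $\mathrm{lc}(g)^k f = Qg + R$ forces $R=0$ at the roots of $g$ and says nothing about $f$ there. The fix is to close $\mathcal{F}^\ast$ under truncation (dropping the top term) as well:
\begin{itemize}
\item The closure stays finite, because derivatives, truncations and pseudo-remainders all lower formal degree.
\item Repeated differentiation of truncations produces every coefficient of every member, up to a positive factor, as a degree-zero member. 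So the signs of the degree-zero members fix all actual degrees.
\item On each piece you then divide by the truncation of $g$ of the correct degree.
\end{itemize}
Equivalently, you can put the case split on vanishing leading coefficients inside the inductive statement, as H\"ormander does.

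You also need pseudo-remainders of $f$ modulo members of degree $\le \deg f$, not only strictly lower degree, because the other top-degree members stay in the family when $f$ is removed. With that, the induction measure from your last paragraph, (maximal degree, number of top-degree members), is the right one. Induction on family size, as in your earlier phrasing, would fail because the reduction step enlarges the family.
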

The consequence of this theorem can be easily generalized to the following corollary.
\begin{corollary}[Corollary 2.4 of \cite{coste2002introduction}]
\label{corol: projection_semialgebraic}
    If $X \subseteq \mathbb{R}^{n+k}$ is a semi-algebraic set, its image of the projection on the space of the first $n$ coordinates is a semi-algebraic set of $\mathbb{R}^n$.
\end{corollary}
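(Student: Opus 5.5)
The statement to prove is Corollary~\ref{corol: projection_semialgebraic}: the image of a semi-algebraic $X\subseteq\mathbb{R}^{n+k}$ under the coordinate projection $\pi_{n}:\mathbb{R}^{n+k}\to\mathbb{R}^n$ is semi-algebraic. I will argue by induction on $k$, using Lemma~\ref{lemma: Seidenberg_Tarski} (the case of dropping exactly one coordinate) as the engine.

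\textbf{Base case and factorization.} For $k=0$ the projection is the identity, so there is nothing to prove. For $k=1$ the claim is exactly Lemma~\ref{lemma: Seidenberg_Tarski}.

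\textbf{Inductive step.} Assume the statement holds for $k-1$, where $k\ge 2$. Write $\pi_{n}:\mathbb{R}^{n+k}\to\mathbb{R}^n$ as the composition
\[
\pi_{n}=\pi'\circ\pi'' ,
\]
where $\pi'':\mathbb{R}^{n+k}\to\mathbb{R}^{n+k-1}$ forgets the last coordinate and $\pi':\mathbb{R}^{n+k-1}\to\mathbb{R}^n$ forgets the last $k-1$ coordinates. Then:
\begin{enumerate}
\item Applying Lemma~\ref{lemma: Seidenberg_Tarski} with ambient dimension $(n+k-1)+1$ shows that $\pi''(X)\subseteq\mathbb{R}^{n+k-1}$ is semi-algebraic.
\item The induction hypothesis, applied to $\pi''(X)$, shows that $\pi'(\pi''(X))=\pi_{n}(X)$ is semi-algebraic.
\end{enumerate}

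\textbf{Bookkeeping.} The only point needing care is that Lemma~\ref{lemma: Seidenberg_Tarski} is stated for projection onto the first coordinates, and here we always discard trailing coordinates, so it applies verbatim. If one instead wanted to project onto an arbitrary set of coordinates, one would first permute coordinates. A coordinate permutation is a linear bijection, and composing polynomials with it yields polynomials. Hence permutations preserve Definition~\ref{def: semi_algebraic_set}, and the general case reduces to the one above.

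There is no real obstacle beyond this. All the substance lies in Lemma~\ref{lemma: Seidenberg_Tarski}, which we take as given.
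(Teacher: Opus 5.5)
Your proposal is correct and matches the paper's approach. The paper gives no written proof and simply cites Coste, noting that the corollary follows easily from Lemma~\ref{lemma: Seidenberg_Tarski}; your induction on $k$, discarding one trailing coordinate at a time with that lemma, is exactly that iteration. The permutation remark is harmless but not needed.
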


\subsection{Definability}

For the structure $\mathbb{R}_{\mathrm{exp}}$ with language $\mathcal{L}=\{+,\cdot,\mathrm{exp}, <, 0, 1\}$, the set of all $\mathcal{L}$-terms includes constants, variables, and the functions in terms of constants and variables. Given $\mathcal{L}$-terms $t_1,t_2$, the atomic $\mathcal{L}$-formula is of the form $t_1=t_2$ or $t_1<t_2$. The set of all $\mathcal{L}$-formulas is obtained by taking all atomic $\mathcal{L}$-formulas and closing under the Boolean connections $\land, \lor, \lnot$ and quantifiers $\forall, \exists$ \cite{marker1996model, marker2002model}. If a variable is not inside the $\forall, \exists$ quantifiers, then it is a free variable. A set $Z\subset \mathbb{R}^n$ is definable in $\mathbb{R}_{\mathrm{exp}}$ if there is an $\mathcal{L}$-formula $\psi(x_1,\cdots,x_n,y_1,\cdots,y_m)$ with $x_1,\cdots,x_n,y_1,\cdots,y_m$ being free variables and $c_1,\cdots,c_m \in \mathbb{R}$ such that
\begin{equation*}
    Z=\{(z_1,\cdots,z_n) \in \mathbb{R}^n ~|~ \psi(z_1,\cdots,z_n,c_1,\cdots,c_m) \text{ holds}\}.
\end{equation*}
A mapping is definable in $\mathbb{R}_{\mathrm{exp}}$ if its graph is definable in $\mathbb{R}_{\mathrm{exp}}$. One can refer to \cite{coste1999introduction, marker2002model} for more information about definability and $o$-minimal structure. 

\subsection{Conservative mapping}

\begin{definition}[Conservative mappings, \cite{bolte2021conservative}]
\label{def: conservative_mapping}
    Let $f: \mathbb{R}^n \to \mathbb{R}^m$ be a locally Lipschitz function. A nonempty compact set-valued mapping $J_f: \mathbb{R}^n \rightrightarrows \mathbb{R}^{m \times n}$ with closed graph is called a conservative mapping for $f$, if for any absolutely continuous curve $\gamma:[0,1] \to \mathbb{R}^n$, the function $t \mapsto f(\gamma(t))$ satisfies
    \begin{equation*}
        \frac{d}{dt} f(\gamma(t)) = V\dot{\gamma}(t), \quad \forall V\in J_f(\gamma(t)), 
    \end{equation*}
    for almost all $t\in [0,1]$. When $f: \mathbb{R}^n \to \mathbb{R}$ is a scalar-valued function, the corresponding conservative mapping $J_f: \mathbb{R}^n \rightrightarrows \mathbb{R}^n$ is also called a conservative field.
\end{definition}

Based on this definition and Remark 3(e) of \cite{bolte2021conservative}, we can get the following propositions.

\begin{proposition}
\label{prop: subset_mapping_conservative}
    If $J_1,J_2$ are two graph closed set-valued mappings with nonempty compact values, then $J_1 \subset J_2$ and $J_2$ being a conservative mapping implies that $J_1$ is a conservative mapping as well.
\end{proposition}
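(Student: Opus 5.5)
The plan is to verify Definition~\ref{def: conservative_mapping} for $J_1$ directly, taking the three defining requirements in turn.

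First, the structural requirements. We need $J_1$ to have nonempty compact values and a closed graph. These are given by hypothesis, so nothing needs to be checked. The function $f$ for which $J_2$ is conservative is locally Lipschitz by the definition of conservativity applied to $J_2$, so $f$ is also a legitimate function for $J_1$.

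Second, the chain-rule requirement. Fix an absolutely continuous curve $\gamma:[0,1]\to\mathbb{R}^n$. Since $J_2$ is conservative for $f$, there is a set $T\subseteq[0,1]$ of full measure with the following property: for every $t\in T$, the derivative $\frac{d}{dt}f(\gamma(t))$ exists and equals $V\dot\gamma(t)$ for every $V\in J_2(\gamma(t))$. Now take any $t\in T$ and any $V\in J_1(\gamma(t))$. The inclusion $J_1\subset J_2$ means $J_1(z)\subseteq J_2(z)$ for all $z$, so $V\in J_2(\gamma(t))$. Hence $\frac{d}{dt}f(\gamma(t))=V\dot\gamma(t)$.

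The key point is that the exceptional null set depends only on $\gamma$ and $J_2$. It does not depend on the choice of $V$, so the same $T$ serves for $J_1$, and we get the identity for almost all $t$ uniformly over $V\in J_1(\gamma(t))$. This is the only place requiring care, namely reading the quantifier order in Definition~\ref{def: conservative_mapping} as "for almost all $t$, for all $V$". Once that is noted, the argument is immediate, and it matches Remark 3(e) of \cite{bolte2021conservative}.
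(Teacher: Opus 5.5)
Your proposal is correct and takes the same approach as the paper, which simply says the proposition follows from the definition of a conservative mapping together with Remark 3(e) of \cite{bolte2021conservative}. Your argument spells this out: the structural properties are hypotheses, and the full-measure set of $t$ obtained from $J_2$ also works for $J_1$ because $J_1(\gamma(t))\subseteq J_2(\gamma(t))$.
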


\begin{proposition}
\label{prop: conv_mapping_conservative}
    If $J: \mathbb{R}^n \rightrightarrows \mathbb{R}^n$ is a conservative mapping, then the mapping
    \begin{equation*}
        \mathrm{conv}(J): \mathbb{R}^n \rightrightarrows \mathbb{R}^n, \quad x\mapsto \mathrm{conv}(J(x))
    \end{equation*}
    is also conservative.
\end{proposition}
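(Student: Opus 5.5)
We need to prove Proposition \ref{prop: conv_mapping_conservative}: if $J$ is conservative, then so is $\mathrm{conv}(J)$.

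The plan is to check the two parts of Definition \ref{def: conservative_mapping} separately. First the structural requirements (nonempty compact values and closed graph), then the chain-rule identity along absolutely continuous curves, which passes to convex combinations because it is linear in $V$.

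\textbf{Structural requirements.} Since $J(x)$ is nonempty and compact in a finite-dimensional space, $\mathrm{conv}(J(x))$ is nonempty and compact. By Carathéodory's theorem it is the image of the compact set $\Delta_{N}\times J(x)^{N}$ under a continuous map, where $N = n^2+1$ (or $n+1$ in the field case) and $\Delta_N$ is the unit simplex.

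For the closed graph, take $x_k\to x$ and $W_k\in \mathrm{conv}(J(x_k))$ with $W_k\to W$. Write
\[
W_k=\sum_{i=1}^{N}\alpha_{k,i}V_{k,i}, \qquad V_{k,i}\in J(x_k), \quad \alpha_k\in\Delta_N.
\]
To extract convergent subsequences we need the $V_{k,i}$ to be bounded. This follows because a closed-graph map with compact values is locally bounded (the standard fact used in \cite{bolte2021conservative}; it can also be seen as part of their framework, where conservative mappings are locally bounded). After passing to a subsequence, $\alpha_{k,i}\to\alpha_i$ and $V_{k,i}\to V_i$. Closedness of the graph of $J$ gives $V_i\in J(x)$, and hence $W=\sum_i\alpha_iV_i\in\mathrm{conv}(J(x))$.

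\textbf{Chain rule.} Fix an absolutely continuous curve $\gamma:[0,1]\to\mathbb{R}^n$. Conservativity of $J$ gives a full-measure set $T\subset[0,1]$ on which $\frac{d}{dt}f(\gamma(t))$ exists and equals $V\dot\gamma(t)$ for every $V\in J(\gamma(t))$. For $t\in T$ and any $W=\sum_i\alpha_iV_i\in\mathrm{conv}(J(\gamma(t)))$, linearity gives
\[
W\dot\gamma(t)=\sum_i\alpha_iV_i\dot\gamma(t)=\Big(\sum_i\alpha_i\Big)\frac{d}{dt}f(\gamma(t))=\frac{d}{dt}f(\gamma(t)).
\]
Since $T$ does not depend on $W$, the identity holds for all $W$ simultaneously, which is exactly what the definition requires.

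\textbf{Main obstacle.} The only delicate point is local boundedness in the closed-graph step. If it is not taken as part of the framework, it has to be derived. The first route to try is the closed-graph property together with the fact, shown in \cite{bolte2021conservative}, that any conservative mapping is contained in a locally bounded map. An alternative is to use Remark 3(e) of \cite{bolte2021conservative} directly, which is the source the paper cites for this proposition.
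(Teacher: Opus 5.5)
Your compactness step and your chain-rule step are correct. The full-measure set $T$ comes from the definition and does not depend on $V$, so linearity carries the identity over to every convex combination.

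The gap is in the closed-graph step. The ``standard fact'' you invoke is false: a map with closed graph and compact values need not be locally bounded. For example, $x\mapsto\{1/x\}$ for $x\neq 0$ and $0\mapsto\{0\}$ has graph $\{xy=1\}\cup\{(0,0)\}$, which is closed, yet the values blow up near $0$. Your fallback, that every conservative mapping sits inside a locally bounded one, is just local boundedness again. It is not available either: Definition~\ref{def: conservative_mapping} asks only for nonempty compact values, a closed graph and the chain rule.

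This cannot be repaired within the paper's definition, because the statement itself fails there. Take $n=1$, $f\equiv 0$, $J(1/k)=\{0,k\}$ for $k\in\mathbb{N}$, and $J(x)=\{0\}$ otherwise.
\begin{itemize}
\item The graph $(\mathbb{R}\times\{0\})\cup\{(1/k,k):k\in\mathbb{N}\}$ is closed.
\item $J$ is conservative for $f$. For an absolutely continuous $\gamma$, almost every $t\in\gamma^{-1}(1/k)$ is a non-isolated point of that set at which $\gamma$ is differentiable. Hence $\dot\gamma(t)=0$, and $k\dot\gamma(t)=0=\frac{d}{dt}f(\gamma(t))$.
\item Yet $(1/k,1)$ lies in the graph of $\mathrm{conv}(J)$ for every $k$, while the limit $(0,1)$ does not, since $\mathrm{conv}(J)(0)=\{0\}$.
\end{itemize}

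So what is missing is a hypothesis rather than a lemma: local boundedness has to be part of the notion of conservative mapping. Many authors build it into the definition; the paper itself offers no argument beyond citing Remark 3(e) of \cite{bolte2021conservative}. Once local boundedness is assumed, your subsequence extraction is legitimate and your proof is complete. It is then more explicit than the paper's bare citation.

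In the paper's own use of the proposition (the proof of Theorem~\ref{thm: adam_convergence}) no harm is done. There the AD fields are uniformly bounded by Assumption~\ref{assum: conservative_mapping_setting}, so your argument applies.
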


Moreover, a nice property of conservative mapping is the product of conservative mappings is still conservative. 
\begin{proposition}[Lemma 5 of \cite{bolte2021conservative}]
    \label{prop: product_conservative_mapping}
    For locally Lipschitz continuous mappings $f_1:\mathbb{R}^p \to \mathbb{R}^m, f_2: \mathbb{R}^m \to \mathbb{R}^l$ and their associated conservative mappings $J_1:\mathbb{R}^p \rightrightarrows \mathbb{R}^{m \times p}, J_2: \mathbb{R}^m \rightrightarrows \mathbb{R}^{l \times m}$, the product mapping $J_2J_1$ is a conservative mapping for $f_2\circ f_1$.
\end{proposition}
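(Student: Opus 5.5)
The plan is to check the two parts of Definition~\ref{def: conservative_mapping} for $J_2J_1$ separately. By $J_2J_1$ we mean the set-valued mapping $x \mapsto \{V_2V_1 ~|~ V_1 \in J_1(x),\ V_2 \in J_2(f_1(x))\}$. First comes the structural part: nonempty compact values and a closed graph. Then comes the chain-rule identity along absolutely continuous curves, which is the real content. Since $f_1$ and $f_2$ are locally Lipschitz, $f_2\circ f_1$ is locally Lipschitz, so the definition applies.

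For the structural part, nonemptiness is immediate. Compactness of $J_2(f_1(x))J_1(x)$ follows because it is the image of the compact set $J_2(f_1(x))\times J_1(x)$ under the continuous map $(V_2,V_1)\mapsto V_2V_1$.

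For the closed graph, take $x_k\to x$ and $W_k=V_2^kV_1^k\to W$ with $V_1^k\in J_1(x_k)$ and $V_2^k\in J_2(f_1(x_k))$. We need the sequences $\{V_1^k\}$ and $\{V_2^k\}$ to be bounded. For this I plan to use that a conservative mapping is locally bounded, which is \cite[Remark~3]{bolte2021conservative}. If one does not want to rely on that, the hypothesis can be read as part of the definition, as in \cite{bolte2021conservative}.

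This step is the main subtlety. Closed graph together with compact values does not by itself give boundedness along a convergent sequence, so the local boundedness has to be invoked explicitly. With it, $V_1^k$ is bounded near $x$, and $V_2^k$ is bounded because $f_1(x_k)\to f_1(x)$ by continuity. Passing to subsequences, $V_1^k\to V_1\in J_1(x)$ and $V_2^k\to V_2\in J_2(f_1(x))$ by the closed graphs of $J_1$ and $J_2$. Hence $W=V_2V_1\in J_2(f_1(x))J_1(x)$.

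For the chain rule, fix an absolutely continuous curve $\gamma:[0,1]\to\mathbb{R}^n$. The image $\gamma([0,1])$ is compact and $f_1$ is Lipschitz on a neighbourhood of it, so $\sigma:=f_1\circ\gamma$ is again absolutely continuous. By conservativity of $J_1$, there is a full-measure set $T_1\subseteq[0,1]$ on which $\dot\sigma(t)=V_1\dot\gamma(t)$ for every $V_1\in J_1(\gamma(t))$. Applying conservativity of $J_2$ to the absolutely continuous curve $\sigma$ gives a full-measure set $T_2$ on which $\frac{d}{dt}f_2(\sigma(t))=V_2\dot\sigma(t)$ for every $V_2\in J_2(\sigma(t))$. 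On $T_1\cap T_2$, which still has full measure, substituting the first identity into the second gives
\[
\frac{d}{dt}(f_2\circ f_1)(\gamma(t)) = V_2V_1\dot\gamma(t)
\]
for every $V_2V_1\in J_2(f_1(\gamma(t)))J_1(\gamma(t))$. This is exactly the required property, so $J_2J_1$ is conservative for $f_2\circ f_1$.
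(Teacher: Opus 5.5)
Your proposal is correct and follows the standard argument for Lemma~5 of \cite{bolte2021conservative}, which the paper cites without giving a proof of its own: $f_1\circ\gamma$ is absolutely continuous, and the two curve identities are chained on the intersection of two full-measure sets. Your caution about the closed graph is justified, and it should be settled by taking local boundedness as part of the definition rather than deriving it, because it does not follow from the paper's Definition~\ref{def: conservative_mapping} and the statement genuinely fails without it: take $f_1(x)=x^2/2$, $J_1(x)=\{x\}$, $f_2\equiv 0$, and $J_2(y)=\{0,k\}$ if $y=1/(2k^2)$ for some $k\in\mathbb{N}$, $J_2(y)=\{0\}$ otherwise; both maps satisfy that definition (an absolutely continuous curve has zero derivative almost everywhere on the preimage of a countable set), yet $(1/k,1)\in\mathrm{graph}(J_2J_1)$ for every $k$ while $J_2(f_1(0))J_1(0)=\{0\}$.
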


Conservative mapping has a very close relation to Clarke subdifferential \cite{clarke1975generalized, clarke1990optimization} as revealed by the following proposition.

\begin{proposition}[Corollary 1 of \cite{bolte2021conservative}]
    \label{prop: clarke_conservative}
    Let $J_f: \mathbb{R}^n \rightrightarrows \mathbb{R}^n$ be a conservative mapping for $f:\mathbb{R}^n \to \mathbb{R}$. Then the Clarke subdifferential $\partial_C f$ is a conservative mapping for $f$ and for all $x \in \mathbb{R}^n$,
    \begin{equation*}
        \partial_C f(x) \subset \mathrm{conv} (J_f(x)).
    \end{equation*}
\end{proposition}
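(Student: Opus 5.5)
The plan is to derive both claims from one inclusion. The target is $\partial_C f(x) \subset \mathrm{conv}(J_f(x))$ for every $x$. Once this holds, conservativity of $\partial_C f$ follows quickly:
\begin{itemize}
\item By Proposition \ref{prop: conv_mapping_conservative}, the mapping $\mathrm{conv}(J_f)$ is conservative for $f$.
\item $\partial_C f$ has nonempty compact convex values and a closed graph, since $f$ is locally Lipschitz.
\item Proposition \ref{prop: subset_mapping_conservative}, applied with $J_1 = \partial_C f$ and $J_2 = \mathrm{conv}(J_f)$, then gives that $\partial_C f$ is conservative.
\end{itemize}
So everything reduces to proving the inclusion.

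\textbf{Step 1: $J_f(x) = \{\nabla f(x)\}$ for almost every $x$.} Fix a countable dense set of unit directions $\{v_j\}$. For each $j$ and each $y$ in the hyperplane $v_j^\perp$, apply Definition \ref{def: conservative_mapping} to the line $\gamma(t) = y + t v_j$. For a.e. $t$ this gives
\begin{equation*}
\tfrac{d}{dt} f(y+tv_j) = \langle g, v_j\rangle \quad \text{for all } g \in J_f(y+tv_j).
\end{equation*}
Curves are parametrized on $[0,1]$, so one covers each line by countably many unit-length segments. By Fubini, for each $j$ the exceptional set of $x$ is Lebesgue-null. Take the countable union of these null sets together with the non-differentiability set of $f$, which is null by Rademacher. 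Call this null set $N$.

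For $x \notin N$, every $g \in J_f(x)$ satisfies $\langle g - \nabla f(x), v_j\rangle = 0$ for all $j$. Density of $\{v_j\}$ then forces $g = \nabla f(x)$. Hence $J_f(x) = \{\nabla f(x)\}$ off $N$.

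\textbf{Step 2: the inclusion.} Use Clarke's gradient characterization: for any null set $N$,
\begin{equation*}
\partial_C f(x) = \mathrm{conv}\{\lim_k \nabla f(x_k) : x_k \to x,\ x_k \notin N\}.
\end{equation*}
Take a sequence $x_k \to x$ with $x_k \notin N$ and $\nabla f(x_k) \to g$. By Step 1, $\nabla f(x_k) \in J_f(x_k)$. Since the graph of $J_f$ is closed, the limit satisfies $g \in J_f(x)$. Taking convex hulls gives $\partial_C f(x) \subset \mathrm{conv}(J_f(x))$.

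\textbf{Main obstacle.} The hardest part is Step 1: it requires a single null set $N$ that works for all directions at once. The countable dense family of directions together with Fubini is what delivers this. A smaller technical point arises in Step 2: closed graph alone does not give local boundedness of $J_f$. This is harmless for the argument, because the sequences $\nabla f(x_k)$ are bounded by the Lipschitz constant of $f$, so the closed-graph argument only ever handles convergent bounded sequences.
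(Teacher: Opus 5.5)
Your proof is correct, and it follows the same route as the cited Corollary~1 of Bolte--Pauwels, which the paper invokes without proving: first show that a conservative field reduces to $\{\nabla f\}$ almost everywhere, then combine Clarke's null-set-avoiding gradient formula with the closed graph of $J_f$. Two details you leave implicit deserve a line each: Fubini needs the exceptional set to be measurable, which holds because the closed graph of $J_f$ makes $x\mapsto\max_{g\in J_f(x)}\langle g,v\rangle$ Borel (its superlevel sets are projections of closed sets, hence $F_\sigma$); and in the final step you need not rely on $\mathrm{conv}(J_f)$ having a closed graph (not automatic without local boundedness), since the chain-rule identity is linear in $g$ and so passes directly from $J_f(\gamma(t))$ to $\mathrm{conv}(J_f(\gamma(t)))\supset\partial_C f(\gamma(t))$.
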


\begin{definition}[Critical point]
    Let $J_f: \mathbb{R}^n \rightrightarrows \mathbb{R}^n$ be a conservative mapping for $f:\mathbb{R}^n \to \mathbb{R}$. Then $x$ is a $J_f$-critical point of $f$ if $0 \in J_f(x)$.
\end{definition}

\begin{proposition}[Proposition 1 of \cite{bolte2021conservative}]
    Let $J_f: \mathbb{R}^n \rightrightarrows \mathbb{R}^n$ be a conservative mapping for $f:\mathbb{R}^n \to \mathbb{R}$ and $x \in \mathbb{R}^n$ be a local minimum or local maximum of $f$. Then $0 \in \mathrm{conv}(J_f(x))$, or equivalently, $x$ is a $\mathrm{conv}(J_f)$-critical point of $f$.
\end{proposition}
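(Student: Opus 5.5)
The shortest route goes through Proposition \ref{prop: clarke_conservative}. That result gives the inclusion $\partial_C f(x) \subset \mathrm{conv}(J_f(x))$ for every $x$. So it suffices to show $0 \in \partial_C f(x)$ whenever $x$ is a local minimum or local maximum of the locally Lipschitz function $f$.

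\begin{itemize}
\item For a local minimum, this is the Clarke Fermat rule: $f^\circ(x;d) \ge \limsup_{t\downarrow 0} (f(x+td)-f(x))/t \ge 0$ for every direction $d$. Since $f^\circ(x;\cdot)$ is the support function of $\partial_C f(x)$, this forces $0 \in \partial_C f(x)$.
\item For a local maximum, apply the same argument to $-f$ and use $\partial_C(-f)(x) = -\partial_C f(x)$.
\end{itemize}

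Combining $0 \in \partial_C f(x)$ with the inclusion above gives $0 \in \mathrm{conv}(J_f(x))$, i.e., $x$ is a $\mathrm{conv}(J_f)$-critical point.

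To avoid depending on Clarke calculus, I would also give a direct argument from Definition \ref{def: conservative_mapping}. Suppose $0 \notin \mathrm{conv}(J_f(x))$. This set is convex and compact, so strict separation gives $d \in \mathbb{R}^n$ and $\delta>0$ with $\langle v,d\rangle \le -\delta$ for all $v \in J_f(x)$.

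Next I would use that $J_f$ has closed graph and compact values, together with local boundedness, to get outer semicontinuity at $x$. Concretely, there is $r>0$ such that $\langle v,d\rangle \le -\delta/2$ for all $v\in J_f(y)$ with $\|y-x\|\le r$. This follows by contradiction: otherwise a sequence $y_k \to x$ with bad $v_k$ would have a cluster point in $J_f(x)$ violating the separation.

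Then apply the conservativity identity along the absolutely continuous curve $\gamma(t)=x+td$. For small $s>0$ this gives
\[
f(x+sd)-f(x)=\int_0^s \langle v(t),d\rangle\,dt \le -\tfrac{\delta}{2}s<0,
\]
and similarly $f(x)-f(x-sd)\le -\tfrac{\delta}{2}s$. Hence $f(x-sd)>f(x)>f(x+sd)$ for all small $s>0$. This contradicts $x$ being either a local minimum or a local maximum.

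The main obstacle in the direct argument is the uniform separation near $x$. Closed graph plus compact values yields outer semicontinuity only if $J_f$ is locally bounded. I would take local boundedness from the convention in \cite{bolte2021conservative}, where conservative mappings are assumed locally bounded. If that assumption is unavailable, I would fall back on the Clarke route through Proposition \ref{prop: clarke_conservative}, which needs nothing beyond the standard Fermat rule for $\partial_C$.
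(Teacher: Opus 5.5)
Your Clarke route is correct and is essentially the paper's approach: the paper gives no proof of its own and simply cites Proposition 1 of \cite{bolte2021conservative}, which follows exactly from the inclusion $\partial_C f(x)\subset\mathrm{conv}(J_f(x))$ of Proposition \ref{prop: clarke_conservative} combined with Clarke's Fermat rule, as you argue. You are right to be cautious about the direct argument: Definition \ref{def: conservative_mapping} does not assume local boundedness and it does not follow from the other requirements, so the uniform separation step is unjustified there, and the Clarke route is the one to present.
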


\begin{lemma}[Theorem 5 of \cite{bolte2021conservative}]
\label{lemma: nonsmooth_Morse_Sard}
    Let $J_f: \mathbb{R}^n \rightrightarrows \mathbb{R}^n$ be a definable conservative mapping for a definable function $f: \mathbb{R}^n \to \mathbb{R}$. Then the set of $J_f$-critical values $\{f(x) ~|~ 0 \in J_f(x)\}$ is finite.
\end{lemma}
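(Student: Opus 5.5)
The plan is to reduce the statement to the classical definable Morse--Sard theorem on smooth strata. The bridge is a \emph{projection formula}: on each stratum of a suitable stratification, the tangential part of every element of $J_f$ equals the Riemannian gradient of $f$ restricted to that stratum. Once this holds, a $J_f$-critical point is a classical critical point of a $C^1$ definable function on a manifold. Finiteness of critical values then follows from Sard's theorem together with $o$-minimality.

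\textbf{Step 1 (stratification).} Since $f$ and $\mathrm{graph}(J_f)$ are definable, I take a finite definable $C^1$ Whitney stratification $\{M_1,\dots,M_r\}$ of $\mathbb{R}^n$ such that each restriction $f|_{M_i}$ is $C^1$. Existence of such stratifications is standard in $o$-minimal geometry.

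\textbf{Step 2 (projection formula).} I would show that, after refining the stratification, for every $x \in M_i$ and every $v \in J_f(x)$,
\begin{equation*}
P_{T_{M_i}(x)}\, v = \nabla (f|_{M_i})(x),
\end{equation*}
where $P_{T_{M_i}(x)}$ is the orthogonal projection onto the tangent space.

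\begin{itemize}
\item Fix a stratum $M$ of dimension $d$. Let $B\subset M$ be the set of points where the identity fails for some $v\in J_f(x)$. This set is definable, because it is given by a first-order formula in $\mathrm{graph}(J_f)$, $f$, and the definable tangent bundle of $M$.
\item Suppose $\dim B = d$. Then $B$ contains a nonempty open subset $U$ of $M$. By definable choice, I pick a definable selection $v(x)$ of the bad elements that is continuous on a smaller open set. There I have a unit tangent direction $w(x)$ with $\langle v(x) - \nabla(f|_M)(x), w(x)\rangle \ne 0$.
\item Following a $C^1$ integral curve $\gamma$ of $w$ inside $U$ gives, for all $t$ in an interval,
\begin{equation*}
\tfrac{d}{dt} f(\gamma(t)) = \langle \nabla (f|_M)(\gamma(t)), \dot\gamma(t)\rangle \ne \langle v(\gamma(t)), \dot\gamma(t)\rangle.
\end{equation*}
This holds on a set of positive measure, contradicting Definition \ref{def: conservative_mapping}.
\item Hence $\dim B < d$. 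I then re-stratify $B$ and $M\setminus B$ into lower-dimensional pieces and proceed by downward induction on dimension. The process terminates after finitely many refinements.
\end{itemize}

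\textbf{Step 3 (conclusion).} If $0 \in J_f(x)$ with $x \in M_i$, the projection formula gives $\nabla(f|_{M_i})(x) = 0$. So the set of $J_f$-critical values is contained in $\bigcup_i \mathrm{crit\,val}(f|_{M_i})$. Each set $\mathrm{crit\,val}(f|_{M_i})$ is definable, since it is the image of a definable set under $f$. By the definable Sard theorem, each such set has measure zero. A definable subset of $\mathbb{R}$ is a finite union of points and intervals, so a measure-zero definable subset of $\mathbb{R}$ is finite. A finite union over strata is therefore finite.

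I expect Step 2 to be the main obstacle. The delicate points are making the bad-set argument rigorous: producing a curve with a definable, hence piecewise continuous, selection of bad elements along it, and handling the fact that $J_f$ is set-valued, so failure may occur only for some elements of $J_f(x)$. To control this, I would use definable choice together with the closed graph and local boundedness of $J_f$.
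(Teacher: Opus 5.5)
Your proposal is correct and follows essentially the standard argument. The paper gives no proof of its own; it imports Theorem 5 of \cite{bolte2021conservative}, which is derived as you do: a projection formula (variational stratification) for definable conservative fields, then Sard's theorem on each stratum, then $o$-minimal finiteness of measure-zero definable subsets of $\mathbb{R}$.

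One small point concerns Step 3. With merely $C^1$ strata you genuinely need the $o$-minimal (definable) Sard theorem, because the classical version requires $C^d$ smoothness on a $d$-dimensional stratum. Alternatively, take a $C^k$ stratification with $k \ge n$ from the start so that the classical Sard theorem applies.
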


\subsection{Semismoothness}

We present the semismoothness of HS-Jacobian and the equivalence between conservativity and semismoothness in this subsection. The first lemma is an important property of HS-Jacobian.

\begin{lemma}[Lemma 2.1 of \cite{han1997newton}]
\label{prop: HS_Jacobian_semismooth}
    For any $x \in \mathbb{R}^n$, there always exists a neighborhood $U$ of $x$ such that
    \begin{equation*}
        \mathcal{E}(y) \subseteq \mathcal{E}(x), \quad \partial_{HS}\Pi_{\mathcal{P}}(y) \subseteq \partial_{HS}\Pi_{\mathcal{P}}(x), \quad \forall y \in U.
    \end{equation*}
    Moreover, for $\mathcal{E}(y) \subseteq \mathcal{E}(x)$, it holds that
    \begin{equation*}
        \Pi_{\mathcal{P}}(y) = \Pi_{\mathcal{P}}(x) + J (y-x), \quad \forall J \in \partial_{HS}\Pi_{\mathcal{P}}(y).
    \end{equation*}
\end{lemma}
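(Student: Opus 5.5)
The plan is to prove the inclusion $\mathcal{E}(y)\subseteq\mathcal{E}(x)$ for $y$ near $x$ by a finiteness-plus-contradiction argument. The inclusion of HS-Jacobians then follows at once, because each $J_K$ depends only on the index set $K$. The linear identity will come from the geometry of the projection $J_K$ onto $\ker H_K^\top$.

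\emph{Step 1 (active sets).} Since $\Pi_{\mathcal{P}}$ is (1-Lipschitz) continuous, every constraint that is strictly inactive at $\Pi_{\mathcal{P}}(x)$ stays inactive at $\Pi_{\mathcal{P}}(y)$ for $y$ close to $x$. Hence $I(y)\subseteq I(x)$ on some neighborhood $U_0$ of $x$.

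\emph{Step 2 (index sets).} There are only finitely many subsets $K\subseteq\{1,\dots,m\}$, so it suffices to fix one $K$ and show the following: if $K\in\mathcal{E}(y_k)$ along some sequence $y_k\to x$, then $K\in\mathcal{E}(x)$. Fix such a $K$ and sequence. Full column rank of $H_K=[A_K^\top~B^\top]$ does not depend on the point. Pick $(\lambda^k,\mu^k)\in M(y_k)$ with $\mathrm{supp}(\lambda^k)\subseteq K\subseteq I(y_k)$. The KKT system then gives
\[
y_k-\Pi_{\mathcal{P}}(y_k)=H_K\begin{pmatrix}\lambda^k_K\\ \mu^k\end{pmatrix},
\]
so by full column rank
\[
(\lambda^k_K,\mu^k)=(H_K^\top H_K)^{-1}H_K^\top\bigl(y_k-\Pi_{\mathcal{P}}(y_k)\bigr).
\]
These multipliers converge to some $(\bar\lambda_K,\bar\mu)$ with $\bar\lambda_K\ge 0$. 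Set the remaining components $\bar\lambda_i=0$ for $i\notin K$. Passing to the limit in the stationarity equation gives $x-\Pi_{\mathcal{P}}(x)=A^\top\bar\lambda+B^\top\bar\mu$. Passing to the limit in $A_K\Pi_{\mathcal{P}}(y_k)=a_K$ gives $K\subseteq I(x)$, which yields complementarity. Thus $(\bar\lambda,\bar\mu)\in M(x)$ with $\mathrm{supp}(\bar\lambda)\subseteq K\subseteq I(x)$, i.e.\ $K\in\mathcal{E}(x)$.

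Now argue by contradiction. For each of the finitely many $K\notin\mathcal{E}(x)$ there is then a neighborhood of $x$ on which $K\notin\mathcal{E}(y)$. Intersecting these neighborhoods gives $U$ with $\mathcal{E}(y)\subseteq\mathcal{E}(x)$ for all $y\in U$, and hence $\partial_{HS}\Pi_{\mathcal{P}}(y)\subseteq\partial_{HS}\Pi_{\mathcal{P}}(x)$. I expect this limiting argument to be the main point. Uniqueness of multipliers for a fixed full-rank $K$ is what makes it work: it avoids any need for a Hoffman-type bound on $M(\cdot)$.

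\emph{Step 3 (linear formula).} Take $J=J_K$ with $K\in\mathcal{E}(y)\subseteq\mathcal{E}(x)$. Then $J_K$ is the orthogonal projector onto $\ker H_K^\top$, so it annihilates $\mathrm{range}(H_K)$.
\begin{itemize}
\item Since $K\subseteq I(x)\cap I(y)$, both $\Pi_{\mathcal{P}}(x)$ and $\Pi_{\mathcal{P}}(y)$ satisfy $A_Kz=a_K$ and $Bz=b$. Hence $\Pi_{\mathcal{P}}(y)-\Pi_{\mathcal{P}}(x)\in\ker H_K^\top$ and is fixed by $J_K$.
\item Membership of $K$ in $\mathcal{E}(y)$ and in $\mathcal{E}(x)$ gives, via stationarity, $y-\Pi_{\mathcal{P}}(y)\in\mathrm{range}(H_K)$ and $x-\Pi_{\mathcal{P}}(x)\in\mathrm{range}(H_K)$. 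Both are therefore killed by $J_K$.
\end{itemize}
Writing $y-x=\bigl(\Pi_{\mathcal{P}}(y)-\Pi_{\mathcal{P}}(x)\bigr)+\bigl(y-\Pi_{\mathcal{P}}(y)\bigr)-\bigl(x-\Pi_{\mathcal{P}}(x)\bigr)$ and applying $J_K$ yields $J_K(y-x)=\Pi_{\mathcal{P}}(y)-\Pi_{\mathcal{P}}(x)$, which is the claimed identity.
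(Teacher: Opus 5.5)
Your proof is correct. The paper itself gives no proof of this lemma: it imports it from Han and Sun \cite{han1997newton} and uses it as a black box, both for graph-closedness and for the semismoothness identity in the proof of Theorem~\ref{thm: semi_algebraic_Jacobian}. So there is no in-paper argument to compare against, and your write-up is a self-contained verification.

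The decisive observation is that, for a fixed $K$ with $H_K$ of full column rank, any admissible multiplier is forced to equal $(H_K^\top H_K)^{-1}H_K^\top\bigl(z-\Pi_{\mathcal{P}}(z)\bigr)$. By continuity of $\Pi_{\mathcal{P}}$, each set $S_K=\{z \mid K\in\mathcal{E}(z)\}$ is therefore closed. Given that there are finitely many $K$, closedness of all $S_K$ is equivalent to the first assertion of the lemma. It is also exactly the local inclusion the paper later invokes for graph-closedness of $\partial_{HS}\Pi_{\mathcal{P}}$.

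Two small remarks:
\begin{itemize}
\item Step~1 is never used, since Step~2 recovers $K\subseteq I(x)$ directly from $A_K\Pi_{\mathcal{P}}(y_k)=a_K$.
\item Step~3 actually proves $\Pi_{\mathcal{P}}(y)=\Pi_{\mathcal{P}}(x)+J_K(y-x)$ for every $K\in\mathcal{E}(x)\cap\mathcal{E}(y)$, with no proximity of $y$ to $x$ required. This is slightly stronger than what is stated.
\end{itemize}
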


The next lemma builds up the equivalence between semismoothness and conservativity for semi-algebraic mappings.

\begin{lemma}[Theorem 3.5 of \cite{davis2022conservative}]
\label{lemma: equivalence_semismooth_conservative}
    Let $f:\mathbb{R}^n \to \mathbb{R}^m$ be a locally Lipschitz continuous and directionally differentiable mapping and $F:\mathbb{R}^n \times \mathbb{R}^n \rightrightarrows \mathbb{R}^m$ be a set-valued mapping. If $f$ and $F$ are semi-algebraic and $F$ satisfies the following conditions:
    \begin{enumerate}
        \item \label{condition: 1}The image $F(x,u)$ is nonempty and compact for all $x,u \in \mathbb{R}^n$;
        \item \label{condition: 2}The mapping $F$ is positively homogeneous in the second argument, i.e.,
        \begin{equation*}
            F(x,0)=\{0\}, \quad F(x,tu)=tF(x,u),
        \end{equation*}
        for all $x,u \in \mathbb{R}^n$ and $t>0$;
        \item \label{condition: 3}For every $\bar{x} \in \mathbb{R}^n$, there exists $L>0$ such that 
        \begin{equation*}
            \mathrm{dist}(F(x,u_1),F(x,u_2)) \le L\|u_1-u_2\|,
        \end{equation*}
        for all $u_1,u_2\in \mathbb{R}^n$ and all $x$ sufficiently close to $\bar{x}$,
    \end{enumerate}
    then the following statements are equivalent:
    \begin{enumerate}
        \item (\textbf{Semismoothness}) For any $x$ it holds that
        \begin{equation*}
            \limsup\limits_{y\to x} \frac{f(y)-f(x)-F(y,y-x)}{\|y-x\|} = \{0\};
        \end{equation*}
        \item (\textbf{Conservative mapping}) For any absolutely continuous curve $\gamma:[0,1] \to \mathbb{R}^n$, equality holds
        \begin{equation*}
            \left\{\frac{d}{dt}(f\circ\gamma)(t)\right\} = F(\gamma(t), \dot{\gamma}(t)),  
        \end{equation*}
        for almost all $t\in[0,1]$.
    \end{enumerate}
\end{lemma}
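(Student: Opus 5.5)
The plan is to use o-minimal (semi-algebraic) tameness in both directions: a stratification argument for semismoothness $\Rightarrow$ conservativity, and the curve selection lemma for the converse. Throughout, condition \ref{condition: 3} is what lets me replace the second argument of $F$ by a nearby vector at a cost of $L\|\cdot\|$. Condition \ref{condition: 2} is what lets me pull scalars out of it.

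\emph{Semismoothness $\Rightarrow$ conservativity.} The graphs of $f$ and of $F$ are semi-algebraic. I will take a finite Whitney stratification $\{M_j\}$ of $\mathbb{R}^n$ (refining, if needed, a stratification of $\mathrm{graph}(F)$) with two properties on each stratum $M$. First, $f|_M$ is $C^1$. Second, for each fixed $u$, the map $x\mapsto F(x,u)$ is continuous in the Hausdorff sense on $M$; generic continuity of definable set-valued maps together with the Lipschitz dependence on $u$ should give this.

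\begin{enumerate}
\item Take $x\in M$ and $u\in T_M(x)$. Choose a $C^1$ curve $c(s)\subset M$ with $c(0)=x$ and $\dot c(0)=u$. Apply semismoothness with $y=c(s)$, so $y-x=su+o(s)$.
\item This gives $F(c(s),su+o(s))=s\,\nabla_M f(x)u+o(s)$. By conditions \ref{condition: 2} and \ref{condition: 3}, $F(c(s),u)\to\{\nabla_M f(x)u\}$ as $s\downarrow 0$.
\item Continuity of $F(\cdot,u)$ on $M$ then yields $F(x,u)=\{\nabla_M f(x)u\}$.
\end{enumerate}

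Now let $\gamma$ be absolutely continuous. Because there are finitely many strata, a standard density-point argument shows that for a.e.\ $t$ the following hold with $M$ the stratum containing $\gamma(t)$: $\gamma$ and $f\circ\gamma$ are differentiable at $t$, and $\dot\gamma(t)\in T_M(\gamma(t))$. At such $t$, $(f\circ\gamma)'(t)=\nabla_M f(\gamma(t))\dot\gamma(t)$, and by the previous step $F(\gamma(t),\dot\gamma(t))$ is exactly this singleton. The main obstacle is this step: arranging the stratification so that $F(\cdot,u)$ is continuous on strata uniformly enough in $u$. If uniformity fails, I would first try stratifying the semi-algebraic set $\mathrm{graph}(F)\subset\mathbb{R}^n\times\mathbb{R}^n\times\mathbb{R}^m$ and projecting via Corollary~\ref{corol: projection_semialgebraic}.

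\emph{Conservativity $\Rightarrow$ semismoothness.} I argue by contradiction. Suppose there are $y_k\to x$ and $v_k\in F(y_k,y_k-x)$ with $(f(y_k)-f(x)-v_k)/\|y_k-x\|\to w\neq 0$. The set of triples $(y,v,r)$ satisfying $v\in F(y,y-x)$ and $\|f(y)-f(x)-v\|\ge \tfrac12\|w\|\,\|y-x\|$ is semi-algebraic and has $x$ in its closure. The curve selection lemma then gives a semi-algebraic arc $s\mapsto(y(s),v(s))$, $s\in(0,\epsilon)$, with $y(0)=x$, lying in this set and, after reparametrization, of Puiseux form $y(s)=x+sd+o(s)$ with $y'(s)=d+o(1)$. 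Three observations finish the argument.

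\begin{enumerate}
\item Conservativity along $y(\cdot)$ shows that for a.e.\ $r$ the set $F(y(r),y'(r))$ is the singleton $\{\psi(r)\}$, where $\psi=(f\circ y)'$ is semi-algebraic. Hence $\psi(r)\to c$ as $r\downarrow0$, and therefore $(f(y(s))-f(x))/s\to c$.
\item By condition \ref{condition: 3}, $F(y(r),d)$ lies within $L\|y'(r)-d\|=o(1)$ of $\psi(r)$.
\item By conditions \ref{condition: 2} and \ref{condition: 3}, $v(s)/s\in F(y(s),d+o(1))$, so $v(s)/s\to c$.
\end{enumerate}

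Then $\|f(y(s))-f(x)-v(s)\|=o(s)$, which contradicts the defining inequality of the set, since $\|y(s)-x\|\sim s\|d\|$. The degenerate case $d=0$ needs a separate rescaling by the leading Puiseux exponent, handled the same way. This shows the limsup is $\{0\}$.
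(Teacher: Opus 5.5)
The paper does not prove this lemma; it imports it from Davis--Drusvyatskiy, who show that both properties are equivalent to a projection formula $F(x,u)=\{\nabla_M f(x)u\}$, $u\in T_M(x)$, on a finite partition into $C^1$ manifolds. Your conservative $\Rightarrow$ semismooth direction is sound. Applying conservativity directly along the arc from curve selection, with condition 3 read as a Hausdorff bound as you do, gives $\|f(y(s))-f(x)-v(s)\|=o(s)$ for almost every small $s$, and that suffices for the contradiction. One small fix: add $y\neq x$ to the defining conditions of the set, since otherwise $(x,0)$ belongs to it and the selected arc could be constant.

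\textbf{The gap.} It lies in semismooth $\Rightarrow$ conservative, at exactly the step you flag. Your step 3 needs strata $M$ on which $x\mapsto F(x,u)$ is continuous at \emph{every} $x\in M$ for \emph{every} $u$. This is asserted, not proved, and it does not follow from generic continuity as you invoke it:
\begin{itemize}
\item For a fixed $u$, the exceptional lower-dimensional subset of $M$ depends on $u$, and there are uncountably many $u$.
\item Applied jointly on $M\times\mathbb{R}^n$, generic continuity only yields an exceptional set $Z$ with $\dim Z<\dim M+n$. By itself this says nothing at a given $x$; if $\dim M<n$, $Z$ could contain the whole tangent bundle $TM$.
\end{itemize}
This is the only place where semialgebraicity of $F$ enters that direction, so the argument is incomplete as written.

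\textbf{Two ways to close it.}

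(i) Keep your route and add a fiber-dimension count. Let $Z'=\{x\in M:\dim Z_x=n\}$; then $\dim Z'\le \dim Z-n<\dim M$. For $x\in M\setminus Z'$, the $u$ with $(x,u)\notin Z$ are dense in $\mathbb{R}^n$. So $F(\cdot,u)|_M$ is continuous at $x$ for a dense set of $u$. Condition 3, holding uniformly for points near $x$, then upgrades this to all $u$ by a $3\varepsilon$ argument. Replace $M$ by $M\setminus\overline{Z'}$ and recurse on the lower-dimensional remainder.

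(ii) Drop continuity altogether. Suppose the set of $x\in M$ admitting some $u\in T_M(x)$ with $F(x,u)\neq\{\nabla_M f(x)u\}$ were full-dimensional in $M$. Then you could pick an open $V\subset M$ and continuous semialgebraic selections $u(x)\in T_M(x)$, $w(x)\in F(x,u(x))$ with $\|w(x)-\nabla_M f(x)u(x)\|\ge\delta>0$ on $V$. Now run your steps 1--2 at any $\bar x\in V$, along a $C^1$ curve $c$ in $V$ with $\dot c(0)=u(\bar x)$, using $c(s)-\bar x=s\,u(c(s))+o(s)$. They force $w(c(s))\to\nabla_M f(\bar x)u(\bar x)$. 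Since $\nabla_M f(c(s))u(c(s))\to\nabla_M f(\bar x)u(\bar x)$, this contradicts the $\delta$-gap. Hence the bad set is lower-dimensional, and refining by induction on dimension gives the projection formula on every stratum. Your density-point argument then yields conservativity.

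Route (ii) is closer to the cited projection-formula approach. It is also more natural: semismoothness at $\bar x$ controls $F$ at the nearby points, which is where the violation lives, so nothing has to be transported back to $\bar x$ by continuity.
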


\section{Details of Theorem 1 in the main paper}
\label{sec: appendix_thm_1}

We give the proof of Theorem \ref{thm: licq_identical} in this section. 


\begin{proof}
    Let $x\in \mathbb{R}^n$ be a given vector. Denote $y=\Pi_{\mathcal{P}}(x)$. There exist $(\lambda, \mu) \in \mathbb{R}^{m}_+ \times \mathbb{R}^l$ such that $(y,\lambda,\mu)$ satisfies the KKT conditions:
    \begin{equation}
    \label{eq: KKT_system_y}
        \left\{
        \begin{aligned}
            & y - x + A^\top \lambda + B^\top \mu =0, \\
            & A y -a \le 0, \quad By = b, \\
            & \lambda \ge 0, \quad \lambda^\top (Ay - a) = 0.
        \end{aligned}
        \right.
    \end{equation}
    Because LICQ holds at $y$, the corresponding multiplier pair $(\lambda, \mu)$ is unique. For convenience, let $S = \mathrm{supp}(\lambda) = \{i \mid \lambda_i > 0\}$ denote the support of $\lambda$.
    
    The HS-Jacobian $\partial_{HS}\Pi_{\mathcal{P}}(x)$ is defined as a collection of $J_K$ for $K \in \mathcal{E}(x)$, where:
    \begin{equation*}
        J_K = I_n - [A_K^\top ~ B^\top] \left( \begin{bmatrix} A_K \\ B \end{bmatrix} [A_K^\top ~ B^\top] \right)^{-1} \begin{bmatrix} A_K \\ B \end{bmatrix},
    \end{equation*}
    and $\mathcal{E}(x)$ is the family of index sets $K \subseteq \{1,2,\cdots,m\}$ such that $S \subseteq K \subseteq I(x)$ and the rows $\{A_i\}_{i \in K} \cup \{B_j\}_{j=1}^l$ are linearly independent. Under the LICQ condition, any subset $K \subseteq I(x)$ automatically inherits linear independence, resulting in $\mathcal{E}(x) = \{K \mid S \subseteq K \subseteq I(x)\}$.
    
    It has been shown in \cite{han1997newton} that $\partial_B \Pi_{\mathcal{P}}(x) \subseteq \partial_{HS}\Pi_{\mathcal{P}}(x)$. Therefore, to show $\partial_{HS}\Pi_{\mathcal{P}}(x) = \partial_B \Pi_{\mathcal{P}}(x)$, we only need to prove that $\partial_{HS}\Pi_{\mathcal{P}}(x) \subseteq \partial_B \Pi_{\mathcal{P}}(x)$. 
    
    Let $K \in \mathcal{E}(x)$ be any fixed index set, we will show $J_K \in \partial_B \Pi_{\mathcal{P}}(x)$. To achieve this goal, it suffices to construct a sequence $\{x^k\}$ converging to $x$ such that $\Pi_{\mathcal{P}}$ is (Fr\'echet) differentiable at $x^k$ and $\lim_{k \to \infty} \Pi_{\mathcal{P}}'(x^k) = J_K$, where $\Pi_{\mathcal{P}}'(x^k)$ is the Fr\'echet derivative of $\Pi_{\mathcal{P}}(\cdot)$ at $x^k$.
    
    Since the LICQ holds at $y$, there exists $d \neq 0$ such that
    \begin{equation*}
        \left\{
        \begin{aligned}
            A_i d &= 0, \quad \forall i \in K \nonumber \\
            B d &= 0, \nonumber \\
            A_i d & < 0, \quad \forall i \in I(x) \setminus K.
        \end{aligned}
        \right.
    \end{equation*}
    Let $\{t_k\}$ be a scalar sequence such that $t_k \downarrow 0$ and define $y^k = y + t_k d$. Then for sufficiently large $k$, we have
    \begin{equation}
    \label{eq: perturbed_active_set}
        \left\{
        \begin{aligned}
            & A_iy^k = a_i, & & \forall i \in K, \\
            & A_iy^k < a_i & & \forall i \notin K.
        \end{aligned}
        \right.
    \end{equation}
    
    Next, we define $(\lambda^k, \mu^k)$ as
    \begin{equation*}
        \lambda^k_i = 
        \begin{cases} 
          \lambda_i + t_k, & \text{if } i \in K, \\
          0, & \text{if } i \notin K,
       \end{cases} \quad \mu^k = \mu,
    \end{equation*}
    and $x^k = y^k + A^\top \lambda^k + B^\top \mu^k$. Therefore, for sufficiently large $k$, we have
    \begin{equation}
    \label{eq: perturbed_support_set}
        \left\{
        \begin{aligned}
            & \lambda_i^k = \lambda_i + t_k >0, & & \forall i \in S, \\
            & \lambda_i^k = t_k > 0, & & \forall i \in K \setminus S, \\
            & \lambda_i^k = 0, & & \forall i \notin K.
        \end{aligned}
        \right.
    \end{equation}

    Thus, based on \eqref{eq: perturbed_active_set} and \eqref{eq: perturbed_support_set}, we can observe that $(x^k,y^k,\lambda^k, \mu^k)$ satisfies the KKT system \eqref{eq: KKT_system_y}, which implies that $\Pi_{\mathcal{P}}(x^k)=y^k$. Since $t_k \downarrow 0$, we have $y^k \to y$ and $\lambda^k \to \lambda$. Consequently, $\lim_{k \to \infty} x^k = x$. Moreover, for all sufficiently large $k$, we have that $I(x^k) = K$ and $\mathrm{supp}(\lambda^k) = K$, which implies that the strictly complementary condition holds. Therefore, the projection operator $\Pi_{\mathcal{P}}$ is differentiable at $x^k$ \cite[4.1.2 Corollary]{facchinei2007finite}, and $\Pi_{\mathcal{P}}'(x^k)=J_K$. Taking the limit yields $\lim_{k \to \infty} \Pi_{\mathcal{P}}'(x^k) = J_K$. Thus, we have shown that $J_K \in \partial_B \Pi_{\mathcal{P}}(x)$. Because $K \in \mathcal{E}(x)$ was chosen arbitrarily, we have $\partial_{HS}\Pi_{\mathcal{P}}(x) \subseteq \partial_B \Pi_{\mathcal{P}}(x)$. Therefore, we establish that $\partial_{HS}\Pi_{\mathcal{P}}(x) = \partial_B \Pi_{\mathcal{P}}(x)$ under the LICQ.
\end{proof}

\section{Details of Theorem 2 in the main paper}
\label{sec: appendix_thm_2}

We give the proof of Theorem \ref{thm: semi_algebraic_Jacobian} in this section. 


\subsection{Supporting lemma}

\begin{lemma}
\label{lemma: semi_algebraic_projection}
    The projection operator $\Pi_{\mathcal{P}}: \mathbb{R}^n \to \mathbb{R}^n$ is a semi-algebraic mapping.
\end{lemma}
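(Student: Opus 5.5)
We will show that $\mathrm{graph}(\Pi_{\mathcal{P}})=\{(x,y)\in\mathbb{R}^n\times\mathbb{R}^n \mid y=\Pi_{\mathcal{P}}(x)\}$ is semi-algebraic. The idea is to write this graph as the image, under a coordinate projection, of a set cut out by polynomial equalities and inequalities. Then Corollary \ref{corol: projection_semialgebraic} finishes the argument.

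The key observation is that, because $\mathcal{P}$ is a nonempty polyhedral set, the KKT system \eqref{eq: KKT_system} characterizes the projection exactly. First, existence of multipliers: every linearly constrained convex QP satisfies the KKT conditions at its minimizer, since no constraint qualification is needed for linear constraints. Second, sufficiency: by convexity, any $y$ admitting some $(\lambda,\mu)$ satisfying \eqref{eq: KKT_system} (with $y$ in place of $\Pi_{\mathcal{P}}(x)$) is the unique minimizer of the strongly convex problem \eqref{eq: projector-general}. Hence
\begin{equation*}
\mathrm{graph}(\Pi_{\mathcal{P}})=\pi\bigl(\mathcal{S}\bigr),\qquad \mathcal{S}:=\{(x,y,\lambda,\mu)\in\mathbb{R}^{n}\times\mathbb{R}^n\times\mathbb{R}^m\times\mathbb{R}^l \mid (y,\lambda,\mu) \text{ satisfies \eqref{eq: KKT_system} at } x\},
\end{equation*}
where $\pi$ is the projection onto the first $2n$ coordinates.

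Next we verify that $\mathcal{S}$ is semi-algebraic. Each defining relation is polynomial in $(x,y,\lambda,\mu)$:
\begin{itemize}
\item the $n$ linear equalities $y-x+A^\top\lambda+B^\top\mu=0$;
\item the $l$ equalities $By-b=0$;
\item the scalar equality $\lambda^\top(Ay-a)=0$, which is quadratic;
\item the inequalities $A_iy-a_i\le 0$ and $-\lambda_i\le 0$ for $i=1,\dots,m$.
\end{itemize}
Each non-strict inequality $p\le 0$ is the union $\{p<0\}\cup\{p=0\}$. Distributing the intersection of these $2m$ unions gives a finite union of basic sets of the form in Definition \ref{def: semi_algebraic_set}, so $\mathcal{S}$ is semi-algebraic. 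Applying Corollary \ref{corol: projection_semialgebraic} with $k=m+l$ then shows that $\pi(\mathcal{S})=\mathrm{graph}(\Pi_{\mathcal{P}})$ is semi-algebraic, which proves the claim.

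The only step that needs real care is the set equality $\mathrm{graph}(\Pi_{\mathcal{P}})=\pi(\mathcal{S})$, and specifically the inclusion $\subseteq$. It relies on the existence of KKT multipliers for every $x$. This is the same fact used in the paper just before \eqref{eq: KKT_system}, and it holds because the constraints are affine.
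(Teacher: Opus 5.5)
Your proposal is correct and follows essentially the same route as the paper: you write $\mathrm{graph}(\Pi_{\mathcal{P}})$ as the coordinate projection of the KKT set, which is semi-algebraic, and then invoke Corollary \ref{corol: projection_semialgebraic}. You also spell out two details the paper leaves implicit, namely that KKT multipliers exist and are sufficient for this linearly constrained strongly convex QP, and how non-strict inequalities fit Definition \ref{def: semi_algebraic_set}.
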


\begin{proof}
    Consider the set in $\mathbb{R}^n \times \mathbb{R}^n \times \mathbb{R}^m \times \mathbb{R}^l$
    \begin{equation*}
        \Gamma:=\left\{ (x,y,\lambda,\mu) ~ \middle|
        \begin{aligned}
            & ~ y-x+A^\top \lambda + B^\top \mu = 0,\\
            & ~ Ay-a \le 0, \quad By=b,\\
            & ~ \lambda \ge 0, \quad \lambda^\top (Ay-a)=0,
        \end{aligned}
        \right\}
    \end{equation*}
    Since this set is defined by finite polynomial equations and inequalities, it is a semi-algebraic set (see Definition \ref{def: semi_algebraic_set}). By Corollary \ref{corol: projection_semialgebraic}, the projection of $\Gamma$ onto the space of the first two variables 
    \begin{equation*}
        \Gamma_{xy}=\{(x,y) ~|~ \exists ~(\lambda,\mu) \in \mathbb{R}^m \times \mathbb{R}^l, \text{s.t. } (x,y,\lambda,\mu) \in \Gamma \}
    \end{equation*}
    is also a semi-algebraic set. By the KKT condition, it holds that
    \begin{equation*}
        \mathrm{graph}(\Pi_{\mathcal{P}}) = \{(x,y) ~|~ x\in \mathbb{R}^n, y = \Pi_{\mathcal{P}}(x)\}
    \end{equation*}
    coincides with $\Gamma_{xy}$. This completes the proof.
\end{proof}

\subsection{The proof of Theorem \ref{thm: semi_algebraic_Jacobian}}

\begin{proof}
    Given an index set $K\subseteq \{1,2,\cdots,m\}$, let $H_K=[A_K^\top ~ B^\top]$, where $A_K$ is composed of rows of $A$ indexed by $K$. Define
    \begin{equation*}
        J_K := I_n-H_K(H_K^\top H_K)^{-1} H_K^\top, \quad S_K := \{x ~|~ K\in \mathcal{E}(x)\}.
    \end{equation*}
    The graph of $\partial_{HS}\Pi_{\mathcal{P}}$ is 
    \begin{equation*}
        \begin{aligned}
            \mathrm{graph}(\partial_{HS}\Pi_{\mathcal{P}}) &= \{(x,J) ~|~ x\in \mathbb{R}^n, J \in \partial_{HS}\Pi_{\mathcal{P}}(x)\} \\
            &= \{(x,J) ~|~ x\in \mathbb{R}^n,J=J_K, K \in \mathcal{E}(x)\}.
        \end{aligned}
    \end{equation*}  
    We claim that
    \begin{equation}
    \label{eq: HS_Jacobian_graph}
    \mathrm{graph}(\partial_{HS}\Pi_{\mathcal{P}}) = \bigcup_{K\subseteq \{1,2,\cdots,m\}} \left( S_K \times \{J_K\} \right).
    \end{equation}
    For any $(x,J)\in \mathrm{graph}(\partial_{HS}\Pi_{\mathcal{P}})$, there exists an index set $K\subseteq \{1,2,\cdots,m\}$ and $K\in \mathcal{E}(x)$ such that $J=J_K$. This is equivalent to $x\in S_K$ and $J=J_K$. In contrast, for any $(x,J)$ from the right hand side of \eqref{eq: HS_Jacobian_graph}, there exists an index set $K \subseteq \{1,2,\cdots,m\}$ such that $x\in S_K$ and $J=J_K$. Therefore, the equation \eqref{eq: HS_Jacobian_graph} holds. 
    
    Based on the fact that $\mathrm{graph}(\partial_{HS}\Pi_{\mathcal{P}})$ is of the form of finite unions and $J_K$ is a constant matrix for fixed $K$, it suffices to prove $S_K$ is a semi-algebraic set. Given an index set $K\subseteq \{1,2,\cdots,m\}$, if $H_K$ is not of full column rank, then $S_K=\varnothing$. So we only consider the case where $H_K$ is of full column rank. In this case, $x \in S_K$ if and only if
    \begin{equation*}
        \exists ~ (\lambda,\mu) \in \mathbb{R}^m_+ \times \mathbb{R}^l, \quad \mathrm{s.t.} ~ (x,\lambda,\mu) \text{ satisfies KKT system and } \mathrm{supp}(\lambda) \subseteq K \subseteq I(x).
    \end{equation*}
    This implies that $S_K$ is a projection set of 
    \begin{equation*}
        \left\{ (x,\lambda,\mu) ~ \middle| ~
        \begin{aligned}
            & \Pi_{\mathcal{P}}(x)-x + A^\top \lambda + B^\top \mu = 0 \\
            & A_K \Pi_{\mathcal{P}}(x) = a_K, \quad A_{\bar{K}} \Pi_{\mathcal{P}}(x) \le a_{\bar{K}} \\
            & B \Pi_{\mathcal{P}}(x) = b \\
            & \lambda_K \ge 0, \quad \lambda_{\bar{K}} = 0
        \end{aligned}
        \right\},
    \end{equation*}
    where $\bar{K} := \{1,2,\cdots,m\} \setminus K$ and $A_K, A_{\bar{K}}, a_K, a_{\bar{K}}, \lambda_K, \lambda_{\bar{K}}$ represent the submatrices and subvectors indexed by the subscripts, respectively. Since $\Pi_{\mathcal{P}}$ is a semi-algebraic mapping, we can observe that this set is semi-algebraic. Consequently, by a similar argument in the proof of Lemma \ref{lemma: semi_algebraic_projection}, its projection set $S_K$ is also a semi-algebraic set.

    Therefore, for a given index set $K \subseteq \{1,2,\cdots,m\}$, $S_K$ is a semi-algebraic set. Based on this fact we conclude that the HS-Jacobian $\partial_{HS} \Pi_{\mathcal{P}}$ is a semi-algebraic mapping.

    Now, we focus on proving the conservativity of HS-Jacobian. Obviously, by the definition of HS-Jacobian, it is a nonempty compact set-valued mapping. We first prove that it is also graph closed. This is equivalent to prove for any $x \in \mathbb{R}^n$ and any converging sequence $\{x_k\}$ satisfying $\lim_{k \to \infty} x_k =x$, if $J_k \in \partial_{HS}\Pi_{\mathcal{P}}(x_k)$ and $\lim_{k \to \infty} J_k = J$, then $J \in \partial_{HS}\Pi_{\mathcal{P}}(x)$. Based on Proposition \ref{prop: HS_Jacobian_semismooth}, for any $x\in \mathbb{R}^n$, there exists a neighborhood $U$ of $x$ such that $\partial_{HS}\Pi_{\mathcal{P}}(y) \subset \partial_{HS}\Pi_{\mathcal{P}}(x)$ for any $y\in U$. Thus, we have that for sufficiently large $k$, $x_k \in U$. For any $J_k \in \partial_{HS}\Pi_{\mathcal{P}}(x_k)$ and $\lim_{k \to \infty} J_k = J$, we have $J_k \in \partial_{HS}\Pi_{\mathcal{P}}(x_k) \subset \partial_{HS}\Pi_{\mathcal{P}}(x)$. Since $\partial_{HS}\Pi_{\mathcal{P}}(x)$ has finite elements, it is obvious that $J \in \partial_{HS}\Pi_{\mathcal{P}}(x)$. This indicates HS-Jacobian is graph closed.

    Define a set-valued mapping based on HS-Jacobian 
    \begin{equation*}
        F: \mathbb{R}^n \times \mathbb{R}^n \rightrightarrows \mathbb{R}^n, (x,u) \mapsto \{Ju ~|~ \forall J \in \partial_{HS}\Pi_{\mathcal{P}}(x)\}.
    \end{equation*}
    By Proposition \ref{prop: HS_Jacobian_semismooth}, for any $x\in \mathbb{R}^n$, there exists a neighborhood $U$ such that for any $y \in U$, $\mathcal{E}(y) \subseteq \mathcal{E}(x)$ and 
    \begin{equation*}
        \Pi_{\mathcal{P}}(y) = \Pi_{\mathcal{P}}(x) + J(y-x), \quad \forall J\in \partial_{HS}\Pi_{\mathcal{P}}(y).
    \end{equation*}
    This means 
    \begin{equation*}
        F(y,y-x) = \{\Pi_{\mathcal{P}}(y) - \Pi_{\mathcal{P}}(x)\},
    \end{equation*}
    which implies that the semismoothness in Lemma \ref{lemma: equivalence_semismooth_conservative} holds. Also, it is known that $\Pi_{\mathcal{P}}$ is Lipschitz continuous, directionally differentiable \cite{haraux1977differentiate} and semi-algebraic (see Lemma \ref{lemma: semi_algebraic_projection}). 

    Moreover, by a similar argument as in the previous proof, the graph of $F$ can be represented as 
    \begin{equation*}
        \begin{aligned}
            \mathrm{graph}(F) 
            &= \{(x,u,y) ~|~ \forall (x,u) \in \mathbb{R}^n \times \mathbb{R}^n , y \in F(x,u)\} \\
            &= \{(x,u,y) ~|~ \forall (x,u) \in \mathbb{R}^n \times \mathbb{R}^n , y=Ju, \forall J \in \partial_{HS}\Pi_{\mathcal{P}}(x)\} \\
            &= \{(x,u,y) ~|~ \forall (x,u) \in \mathbb{R}^n \times \mathbb{R}^n , y=J_Ku, \forall K\in \mathcal{E}(x)\} \\
            &= \bigcup_{K\subseteq\{1,2,\cdots,m\}} (S_K \times \{(u,y) ~|~ y=J_Ku\}).
        \end{aligned}
    \end{equation*}
    Therefore, we know that $F$ is also semi-algebraic as $S_K$ is a semi-algebraic set.

    To prove HS-Jacobian is a conservative mapping, it suffices to prove that $F$ satisfies the three conditions in Lemma \ref{lemma: equivalence_semismooth_conservative}. By the definition of $F$, the condition \ref{condition: 2} is automatically fulfilled. Moreover, since $|\partial_{HS}\Pi_{\mathcal{P}}(x)|$ is finite for any $x\in \mathbb{R}^n$, it is easy to verify that the condition \ref{condition: 1} and \ref{condition: 3} also hold. Thus, we have that for any absolutely continuous curve $\gamma : [0,1] \to \mathbb{R}^n$, 
    \begin{equation*}
        \frac{d}{dt} (\Pi_{\mathcal{P}} \circ \gamma)(t) = J\dot{\gamma}(t), \quad \forall J\in \partial_{HS}\Pi_{\mathcal{P}}(\gamma(t)),
    \end{equation*}
    for almost all $t\in[0,1]$. This indicates HS-Jacobian is a conservative mapping for projection operator $\Pi_{\mathcal{P}}$. 
\end{proof}

\section{Details of Theorem 3 in the main paper}
\label{sec: appendix_thm_3}

We give the proof of Theorem \ref{thm: adam_convergence} in this section.

\subsection{Supporting lemmas}

\begin{lemma}
\label{lemma: each_AD_mapping_measurable}
    For a given $(x^{(t)}, y^{(t)}) \in \Omega$, the AD field $\mathcal{D}_{\varphi_t}: \mathbb{R}^p \rightrightarrows \mathbb{R}^p$ for $\varphi_t(\theta)$ is a measurable mapping\footnote{For the definition of measurable set-valued mappings, one can refer to Section 17 and 18 of \cite{aliprantis2006infinite} for more details.}.
\end{lemma}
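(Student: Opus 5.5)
The plan is to deduce measurability from definability, using that definable sets are Borel. Fix $(x^{(t)},y^{(t)})\in\Omega$. By the discussion after Assumption \ref{assum: mapping_is_clarke_subdifferential}, every field $D_k$ used in Algorithm \ref{alg: reverse_mode_AD} is definable in $\mathbb{R}_{\mathrm{exp}}$. For the projection coordinates this holds because the rows of $\partial_{HS}\Pi_{\mathcal{P}}$ are semi-algebraic (Theorem \ref{thm: semi_algebraic_Jacobian} and Corollary \ref{corol: projection_semialgebraic}). For the remaining elementary functions it holds by Assumption \ref{assum: block_definable} together with the definability of Clarke subdifferentials of definable functions. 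The first step is therefore to show that $\mathrm{graph}(\mathcal{D}_{\varphi_t})\subseteq\mathbb{R}^p\times\mathbb{R}^p$ is a definable set.

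To do this, I will write the graph explicitly by a first-order formula. A pair $(\theta,g)$ lies in the graph if and only if the following hold:
\begin{enumerate}
\item there exist auxiliary values $\theta_{p+1},\dots,\theta_q$ with $\theta_k=\phi_k(\theta_{\mathrm{parents}(k)})$, which are definable relations since each $\phi_k$ is definable;
\item there exist vectors $d_k$ with $(\theta_{\mathrm{parents}(k)},d_k)\in\mathrm{graph}(D_k)$;
\item there exist intermediate accumulators $g^{(q)},\dots,g^{(p+1)}\in\mathbb{R}^q$, obeying the polynomial update rules of Algorithm \ref{alg: reverse_mode_AD}, whose first $p$ coordinates at the end equal $g$.
\end{enumerate}
This is a finite conjunction of definable relations with existential quantifiers over finitely many real variables. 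Hence the graph is definable, by closure of definable sets under Boolean operations and coordinate projection, the o-minimal analogue of Corollary \ref{corol: projection_semialgebraic}. I will then recall the standard fact that every definable set in an o-minimal expansion of the real field is a finite union of cells. Cells are Borel (indeed locally closed $C^0$-submanifolds), so $\mathrm{graph}(\mathcal{D}_{\varphi_t})$ is a Borel subset of $\mathbb{R}^{2p}$.

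The remaining step, which I expect to be the main obstacle, is passing from a Borel graph to measurability of the set-valued map in the sense of \cite{aliprantis2006infinite}. That sense requires that $\{\theta: \mathcal{D}_{\varphi_t}(\theta)\cap U\neq\varnothing\}$ be measurable for every open $U$. I would first try to show that $\mathcal{D}_{\varphi_t}$ has closed graph and compact values, since such maps are upper semicontinuous and therefore measurable. Compact values follow from Assumption \ref{assum: conservative_mapping_setting}(2) together with compactness of each $D_k$ and finiteness of the recursion. Closedness of the graph follows because each $D_k$ has closed graph and is locally bounded, each $\phi_k$ is continuous, and the recursion consists of finitely many sums of products, so limits of admissible choices $d_k$ remain admissible along a subsequence.

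If that route is awkward, I will fall back on definability directly. The set $\{\theta:\mathcal{D}_{\varphi_t}(\theta)\cap U\neq\varnothing\}$ is the projection of $\mathrm{graph}(\mathcal{D}_{\varphi_t})\cap(\mathbb{R}^p\times U)$. For semi-algebraic or definable $U$, such as open balls with rational centers and radii, this projection is definable and hence Borel. Since every open set is a countable union of such balls and the preimage commutes with unions, this gives measurability for all open $U$.
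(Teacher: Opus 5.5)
Your proposal is correct, but it is organized differently from the paper. The paper's proof is a single observation: $\mathcal{D}_{\varphi_t}$ is a conservative field for $\varphi_t$ (Theorem 8 of \cite{bolte2021conservative}), conservative mappings have closed graph by definition, and closed-graph correspondences between $\sigma$-compact Hausdorff spaces are measurable \cite[Theorem 18.20]{aliprantis2006infinite}.

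Your closed-graph route is essentially this argument. The difference is that you re-derive closedness of the graph directly from the recursion. That uses only closedness and local boundedness of each $D_k$, not the full conservativity of $\partial_{HS}\Pi_{\mathcal{P}}$. On this route your opening definability/Borel-graph step is not needed at all. One precision: closed graph plus compact values does not by itself give upper semicontinuity; you need the local boundedness supplied by Assumption~\ref{assum: conservative_mapping_setting}(2) or by the $D_k$. In $\mathbb{R}^p$, however, closed graph alone already suffices. The lower inverse of a closed set is then a countable union of projections of closed sets along compact factors, hence $F_\sigma$.

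Your fallback definability route is genuinely different. It uses only definability: lower inverses of rational balls are definable, hence finite unions of cells, hence Borel. It never uses closedness of the graph. So it would also cover definable choices of $D_k$ that lack closed graph, such as the single-valued selection $J(x)$ of \eqref{eq: HS_Jacobian_computation}. You were right that a Borel graph alone would only give analytic lower inverses, so this extra step is genuinely needed. One bookkeeping point: this route controls lower inverses of open sets, whereas Lemma~\ref{lemma: conservative_mapping_jointly_measurable} works with closed sets. For compact-valued maps the two agree, via $\mathcal{D}_{\varphi_t}^l(F)=\bigcap_n\mathcal{D}_{\varphi_t}^l(F_{1/n})$ with $F_{1/n}$ the open $1/n$-neighborhood of $F$. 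You have already established compact values, so this closes the gap.
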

\begin{proof}
    Noticing that a conservative mapping is a graph-closed mapping by definition, by \cite[Theorem 18.20]{aliprantis2006infinite}, a graph-closed mapping between two $\sigma$-compact Hausdorff spaces is measurable. This completes the proof.
\end{proof}

\begin{lemma}
\label{lemma: conservative_mapping_jointly_measurable}
    The set-valued mapping $\mathcal{D}: \mathbb{R}^p \times \Omega \rightrightarrows \mathbb{R}^p, (\theta,x,y) \mapsto \mathcal{D}_{\Phi(\cdot;x,y)}(\theta)$ is a measurable mapping.
\end{lemma}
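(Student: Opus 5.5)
The plan is to exploit the finiteness of $\Omega$. The dataset $\Omega=\{(x^{(t)},y^{(t)})\}_{t=1}^N$ is a finite set equipped with the $\sigma$-algebra $\mathcal{F}=2^{\Omega}$. Hence the product space $\mathbb{R}^p\times\Omega$, with the product $\sigma$-algebra $\mathcal{B}(\mathbb{R}^p)\otimes 2^{\Omega}$, is just a disjoint union of $N$ copies of $\mathbb{R}^p$. For each $t$ we have $\mathcal{D}(\cdot,x^{(t)},y^{(t)})=\mathcal{D}_{\varphi_t}(\cdot)$, so joint measurability of $\mathcal{D}$ should reduce to measurability of each $\mathcal{D}_{\varphi_t}$, which is Lemma \ref{lemma: each_AD_mapping_measurable}.

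Concretely, I would use the weak-measurability (lower inverse) characterization of \cite[Section 18]{aliprantis2006infinite}. Fix an arbitrary open set $G\subseteq\mathbb{R}^p$ and write
\begin{equation*}
\mathcal{D}^{l}(G):=\{(\theta,x,y)\in\mathbb{R}^p\times\Omega ~|~ \mathcal{D}(\theta,x,y)\cap G\neq\varnothing\}
=\bigcup_{t=1}^N \left(\mathcal{D}_{\varphi_t}^{l}(G)\times\{(x^{(t)},y^{(t)})\}\right),
\end{equation*}
where $\mathcal{D}_{\varphi_t}^{l}(G)=\{\theta ~|~ \mathcal{D}_{\varphi_t}(\theta)\cap G\neq\varnothing\}$. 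By Lemma \ref{lemma: each_AD_mapping_measurable}, each $\mathcal{D}_{\varphi_t}^{l}(G)$ is a Borel subset of $\mathbb{R}^p$. Each singleton $\{(x^{(t)},y^{(t)})\}$ lies in $2^{\Omega}$, so each piece is a measurable rectangle, and a finite union of such rectangles is measurable. The same argument works verbatim with closed sets $G$ if one prefers the strong-measurability definition. Since the values of $\mathcal{D}$ are compact, the two notions coincide anyway.

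The main point to be careful about is that Lemma \ref{lemma: each_AD_mapping_measurable} really delivers the notion of measurability needed here. It rests on graph-closedness of the AD field $\mathcal{D}_{\varphi_t}$, which is conservative by Theorem 8 of \cite{bolte2021conservative}, together with \cite[Theorem 18.20]{aliprantis2006infinite}. That theorem gives measurability in the lower-inverse sense for both open and closed sets, so the required notion matches. Beyond that check, the argument is purely set-theoretic. It uses nothing about the HS-Jacobian beyond its role, through Theorem \ref{thm: semi_algebraic_Jacobian}, in making $\mathcal{D}_{\varphi_t}$ a conservative, hence closed-graph, mapping.
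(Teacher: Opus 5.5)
Your proposal is correct and follows essentially the same route as the paper: you split the lower inverse over the finitely many data points into the rectangles $\mathcal{D}_{\varphi_t}^{l}(\cdot)\times\{(x^{(t)},y^{(t)})\}$ and apply Lemma~\ref{lemma: each_AD_mapping_measurable} to each piece. The only difference is that the paper takes closed sets $U$ where you take open sets $G$, and you correctly note this makes no difference for compact-valued mappings.
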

\begin{proof}
    For any closed set $U \subset \mathbb{R}^p$, its lower inverse of $\mathcal{D}$ \cite{aliprantis2006infinite} can be expressed as
    \begin{equation*}
        \begin{aligned}
            \mathcal{D}^l (U) 
            &= \{(\theta, x,y) ~|~ \mathcal{D}(\theta,x,y) \cap U \ne \varnothing \} \\
            &= \bigcup_{t=1}^N \left( \{\theta ~|~ \mathcal{D}(\theta,x^{(t)}, y^{(t)}) \cap U \ne \varnothing\} \times \{(x^{(t)}, y^{(t)})\} \right) \\
            &= \bigcup_{t=1}^N \left( \{\theta ~|~ \mathcal{D}_{\varphi_t}(\theta) \cap U \ne \varnothing\} \times \{(x^{(t)}, y^{(t)})\} \right) \\
            &= \bigcup_{t=1}^N \left( \mathcal{D}_{\varphi_t}^l(U) \times \{(x^{(t)}, y^{(t)})\} \right).
        \end{aligned}
    \end{equation*}
    Since $\mathcal{D}_{\varphi_t}$ is measurable by Lemma \ref{lemma: each_AD_mapping_measurable} and $\{(x^{(t)}, y^{(t)})\} \in \mathcal{F}$, we know that $\mathcal{D}^l(U)$ is also measurable. This demonstrates $\mathcal{D}$ is a measurable mapping.
\end{proof}

\subsection{The proof of Theorem \ref{thm: adam_convergence}}

\begin{proof}
    To prove this theorem, it suffices to show that under all assumptions in Theorem \ref{thm: adam_convergence}, all the conditions in \cite[Corollary 1]{xiao2024adam} are satisfied. 

    We first prove the measurability of the set-valued mapping $\mathcal{D}: \mathbb{R}^p \times \Omega \rightrightarrows \mathbb{R}^p$ and the mapping 
    \begin{equation*}
        \Psi: \mathbb{R}^p \times \Omega \to \mathbb{R}, \quad (\theta,x,y) \mapsto \Phi(\theta;x,y).
    \end{equation*}
    By Lemma \ref{lemma: conservative_mapping_jointly_measurable}, the measurability of $\mathcal{D}$ can be derived. Based on a similar argument in the proof of Lemma \ref{lemma: conservative_mapping_jointly_measurable}, the measurability of $\Psi$ can be obtained by the measurability of the mapping $\varphi_t(\cdot) = \Phi(\cdot; x^{(t)}, y^{(t)})$ for each fixed $(x^{(t)}, y^{(t)}) \in \Omega$. By Assumption \ref{assum: block_definable}, $\varphi_t$ is a continuous mapping, resulting in its measurability. 

    As discussed in the main paper, for each fixed $(x^{(t)}, y^{(t)}) \in \Omega$, the AD field $\mathcal{D}_{\varphi_t}$ is a definable conservative mapping for the definable function $\varphi_t$. 
    
    Furthermore, based on Definition \ref{def: conservative_mapping} and Proposition \ref{prop: conv_mapping_conservative}, it is easy to check that $\mathcal{D}_{\varphi}$ \eqref{eq: conv_of_conservative_for_loss} is also a conservative mapping for the loss function $\varphi$ \eqref{eq: empirical_objective_function}. We then discuss the definability of the loss function $\varphi$ and its conservative field $\mathcal{D}_{\varphi}$. Obviously, the definability of $\varphi$ can be obtained by the definability of $\varphi_t$. Based on the definability of AD fields $\mathcal{D}_{\varphi_t}(\theta)$, the definability of $\frac{1}{N}\sum_{t=1}^N \mathcal{D}_{\varphi_t}(\theta)$ can be derived. Moreover, by Carath\'{e}odory's theorem \cite[Theorem 17.1]{rockafellar1970convex}, the point from a convex hull in $\mathbb{R}^p$ can be expressed as a convex combination of at most $p+1$ points from the original set. Based on this fact and the definition of definable set, we can obtain that the conservative mapping $\mathcal{D}_{\varphi}$ is also definable. 
    
    Thus, $\mathcal{D}_{\varphi}$ is a definable conservative mapping for the definable function $\varphi$. By Lemma \ref{lemma: nonsmooth_Morse_Sard}, we have that $\{\varphi(\theta) ~|~ 0 \in \mathcal{D}_{\varphi}(\theta)\}$ is finite, which indicates that $\{\varphi(\theta) ~|~ 0 \in \mathcal{D}_{\varphi}(\theta)\}$ has empty interior in $\mathbb{R}$. 

    Combined with Assumption \ref{assum: conservative_mapping_setting} and \ref{assum: adam_algorithm_setting}, all the other conditions in Corollary 1 of \cite{xiao2024adam} are satisfied. Thus, the convergence property of Algorithm 3 is guaranteed.
\end{proof}

\section{Portfolio allocation}
\label{sec: appendix_portfolio}

In the portfolio allocation experiment, we have used the following settings.

\noindent \textbf{Model.} The StemGNN \cite{Cao2020spectral} is used here to predict both the future asset returns and current portfolio weight. For a predicted portfolio weight $w=(w_1,\cdots,w_n)^\top \in \mathbb{R}^n$, the linear constraints for $w$ are
\begin{equation*}
    \sum_{i=1}^n w_i = 1, \quad \sum_{i \in \mathcal{C}}w_i \ge \delta, \quad w\ge 0.
\end{equation*} 
Both LinSATNet and the proposed projection layer are added on $w$ to ensure its feasibility. The training loss is a weighted sum of maximizing the Sharpe ratio \cite{sharpe1966mutual, sharpe1994sharpe} and minimizing the error between the predicted returns and the true returns. The loss weight is set to 1 in the experiment. With out loss of generality, the expert preference set is set to $\mathcal{C}=\{1,2,\cdots,5\}$ and the minimum total weight is set to $\delta=0.5$ in this experiment. 

\noindent \textbf{Forward pass of projection layer.} Gurobi \cite{gurobi} is used here to solve the QP problem. Both the feasibility tolerance and optimality tolerance are set to $1\times 10^{-8}$.

\noindent \textbf{Dataset.} In this experiment, the training dataset consists of daily prices of 493 assets from S\&P 500 index from 2018-01-01 to 2020-12-30. The test dataset covers the data from 2021-03-01 to 2021-12-30. The window size for constructing samples is set to 120. During training, each input sample comprises the prices of all assets over the preceding 120 days and the outputs are the predicted returns of the next 120 days and the current portfolio weights. 

\noindent \textbf{Evaluation.} Following \cite{wang2023linsatnet}, the model performance is evaluated by the Sharpe ratio \cite{sharpe1966mutual, sharpe1994sharpe}. The corresponding returns and risk are computed based on the predicted portfolio allocation weight and the ground truth asset data in the evaluation period. The risk-free return is set to 0.03 in the experiment.

\noindent \textbf{Feasibility computation.} 
In the experiment, the feasibility violation $V^{\mathrm{all}}$ for each sample is computed by
\begin{equation*}
    V^{\mathrm{all}} = \max\{V^{\mathrm{ineq}}_1, V^{\mathrm{ineq}}_2, V^{\mathrm{eq}}\},
\end{equation*}
where
\begin{equation*}
        V^{\mathrm{ineq}}_1 = \left\| \min \left\{\sum_{i\in \mathcal{C}}w_i-\delta, 0\right\} \right\|_2^2, \quad V^{\mathrm{ineq}}_2 = \| \min \{ w,0 \} \|_2^2, \quad V^{\mathrm{eq}} = \left\| \sum_{i=1}^nw_i-1 \right\|_2^2.
\end{equation*}
The feasibility violation for each epoch is the average of feasibility violation of all samples in this epoch. 

\noindent \textbf{Other hyperparameters.} Following \cite{wang2023linsatnet}, we set other hyperparameters as shown in Table~\ref{tab: portfolio_train_param}: 

\begin{table}[ht]
    \centering
    \caption{Training hyperparameters for portfolio allocation.}
    \vspace{1em} 
    
    \definecolor{tablegray}{gray}{0.85}
    
    \renewcommand{\arraystretch}{1.2}
    
    \begin{tabular}{>{\columncolor{tablegray}\bfseries}l l}
        \toprule
        \rowcolor{tablegray} \text{Hyperparameter} & \textbf{Value} \\
        \midrule
        Batch size         & 32  \\
        Epochs             & 50 \\
        Learning rate      & 1e-5 \\
        Learning rate decay coefficient  & 0.5 \\
        Learning rate decay frequency    & 5 epochs \\
        Temperature hyperparameter in LinSATNet & $\tau=0.01$ \\
        Maximum iterations in LinSATNet & 100 \\
        \bottomrule
    \end{tabular}
    \label{tab: portfolio_train_param}
\end{table}

\section{Graph matching}
\label{sec: appendix_graph}

We details the settings of graph matching experiment in this subsection. 

\noindent \textbf{Model.} The neural graph matching v2 (NGM-v2) \cite{wang2021neural} model is selected here to extract the features of images and predict the matching score matrix $X \in \mathbb{R}^{d_1 \times d_2}$. Specifically, this model consists of a CNN extractor and an NGM solver. In the CNN extractor, the VGG16 net \cite{simonyan2015very} is adopted to extract node features and global features from the input images. The SplineCNN \cite{fey2018splinecnn} is then utilized to refine node features, which are further transformed to edge features. Combined with global features, the refined node features and edge features are then transformed to node similarity matrix and edge similarity matrix, based on which the association graph \cite{leordeanu2005spectral, cho2010reweighted} corresponding to the graph matching problem is obtained. The NGM solver is a network, whose input is the affinity matrix of the association graph and the output is the matching score matrix. 

To ensure that the matching score matrix is a doubly stochastic matrix, original NGM-v2 model exploit a Sinkhorn layer. Different from the original NGMv2 model, in partial graph matching tasks, the matching score is imposed the following linear constraints
\begin{equation*}
    \sum_{j=1}^{d_2} X_{ij} \le 1, \forall i\in \{1,\cdots,d_1\}, \quad \sum_{i=1}^{d_1} X_{ij} \le 1, \forall j\in \{1,\cdots, d_2\}, \quad \sum_{i=1}^{d_1} \sum_{j=1}^{d_2} X_{ij} \le \alpha, \quad X\ge 0.
\end{equation*}
To ensure the matching score $X$ satisfy those constraints, in this experiment, we replace the original Sinkhorn layer in \cite{wang2021neural} with LinSATNet \cite{wang2023linsatnet} or the proposed projection layer. With slight abuse of notation, we still use $X$ to denote the outputs of LinSATNet or the projection layer. Following \cite{wang2021neural}, we use the binary cross entropy as the training loss and conduct an end-to-end training from scratch. However, in the early training stage, the training is not stable due to the fact that, the initial model may generate the predictions opposite to the ground truth labels, thereby causing the gradient explosion \cite{tan2023equalization}. To alleviate this problem and stabilize the training process, we add an additional clamp operator on the predicted matching score matrix $X$ by
\begin{equation*}
    \tilde{X}_{ij} = \left\{ \begin{aligned}
        & \varepsilon, & ~ & \mathrm{if} ~ 0\le X_{ij} \le \varepsilon, \\
        & 1-\varepsilon, & ~ & \mathrm{if} ~ 1-\varepsilon \le X_{ij} \le 1, \\
        & X_{ij}, & ~ & \mathrm{otherwise}.
    \end{aligned} \right.
\end{equation*}
The training loss is based on the binary cross entropy between clamped matching score matrix $\tilde{X}$ and the ground truth permutation matrix.

\noindent \textbf{Forward pass of projection layer.} Gurobi \cite{gurobi} is used here to solve the QP problem. Both the feasibility tolerance and optimality tolerance are set to $1\times 10^{-8}$.

\noindent \textbf{Dataset.} We adopt the Pascal VOC dataset \cite{everingham2010pascal} with Berkeley annotations \cite{bourdev2009poselets} of keypoints in the graph matching experiment. The original Pascal VOC dataset contains images with bounding boxes surrounding objects. The keypoints of those objects are added in the Berkeley annotations. It contains 20 categories of objects with annotated keypoint locations. All objects with annotated keypoints are cropped around its bound box and resized to $256\times 256$. The number of keypoints for each object ranges from 6 to 23. It is worth mentioning that there are some keypoints of the objects that are out of the original bounding boxes. In the experiment, we follow the unfiltered settings used in \cite{rolinek2020deep, wang2023linsatnet}. That means, during training, for a given pair of images, the resized images may contain outlier keypoints. So this setting is suitable for partial graph matching tasks.

\noindent \textbf{Evaluation.} Following \cite{rolinek2020deep, wang2023linsatnet}, the model performance is evaluated based on F1-score. When evaluating, each predicted matching score matrix is first transformed into permutation matrix by Hungarian algorithm \cite{kuhn1955hungarian}. The the F1-score is computed based on the transformed permutation matrix and the ground truth permutation matrix.

\noindent \textbf{Feasibility computation.} We first reformulate the linear constraints \eqref{eq: cons_graph} into a vector form
\begin{equation*}
    A\mathrm{vec}(X) \le a, \quad \mathrm{vec}(X) \ge 0,
\end{equation*}
where $\mathrm{vec}(\cdot)$ is the row-major vectorization operator,
\begin{equation*}
    A=\begin{pmatrix}
        I_{d_1} \otimes \textbf{1}_{d_2}^\top \\
        \textbf{1}_{d_1}^\top \otimes I_{d_2} \\
        \textbf{1}_{d_1d_2}^\top
    \end{pmatrix} \in \mathbb{R}^{(d_1+d_2+1) \times (d_1d_2)}, \quad a=\begin{pmatrix}
        \textbf{1}_{d_1} \\
        \textbf{1}_{d_2} \\
        \alpha
    \end{pmatrix} \in \mathbb{R}^{d_1+d_2+1},
\end{equation*}
and $\otimes$ is the Kronecker product. The feasibility violation $V^{\mathrm{all}}$ for one sample is computed by
\begin{equation*}
    V^{\mathrm{all}} = \max\{V_1^{\mathrm{ineq}}, V_2^{\mathrm{ineq}}\}
\end{equation*}
where 
\begin{equation*}
    V_1^{\mathrm{ineq}} = \|\max\{A\mathrm{vec}(X)-a, 0\} \|_2^2, \quad V_2^{\mathrm{ineq}} = \|\min\{\mathrm{vec}(X), 0\}\|_2^2.
\end{equation*}
The feasibility violation for each epoch is the average of feasibility violation of all samples in this epoch.

\noindent \textbf{Other hyperparameters.} Following \cite{wang2021neural}, we set other hyperparameters as shown in Table~\ref{tab: graph_train_param}: 

\begin{table}[ht]
    \centering
    \caption{Training hyperparameters for graph matching.}
    \vspace{1em} 
    
    \definecolor{tablegray}{gray}{0.85}
    
    \renewcommand{\arraystretch}{1.2}
    
    \begin{tabular}{>{\columncolor{tablegray}\bfseries}l l}
        \toprule
        \rowcolor{tablegray} \text{Hyperparameter} & \textbf{Value} \\
        \midrule
        Batch size         & 4  \\
        Epochs             & 20 \\
        CNN extractor learning rate   & 2e-5 \\
        NGM solver learning rate      & 2e-3 \\
        Clamp coefficient  & 1e-3 \\
        Maximum matching number $\alpha$ & Ground truth matching number \\
        Temperature hyperparameter in LinSATNet & $\tau=0.2$ \\
        Maximum iterations in LinSATNet & 80 \\
        \bottomrule
    \end{tabular}
    \label{tab: graph_train_param}
\end{table}

\noindent \textbf{More visualization results of matched graphs.} More matching results are visualized in Fig.~\ref{fig: graph_visual_SI_all}.

\begin{figure}[htb]
    \centering

    \begin{subfigure}{1\textwidth}
        \centering
        \includegraphics[width=1\textwidth]{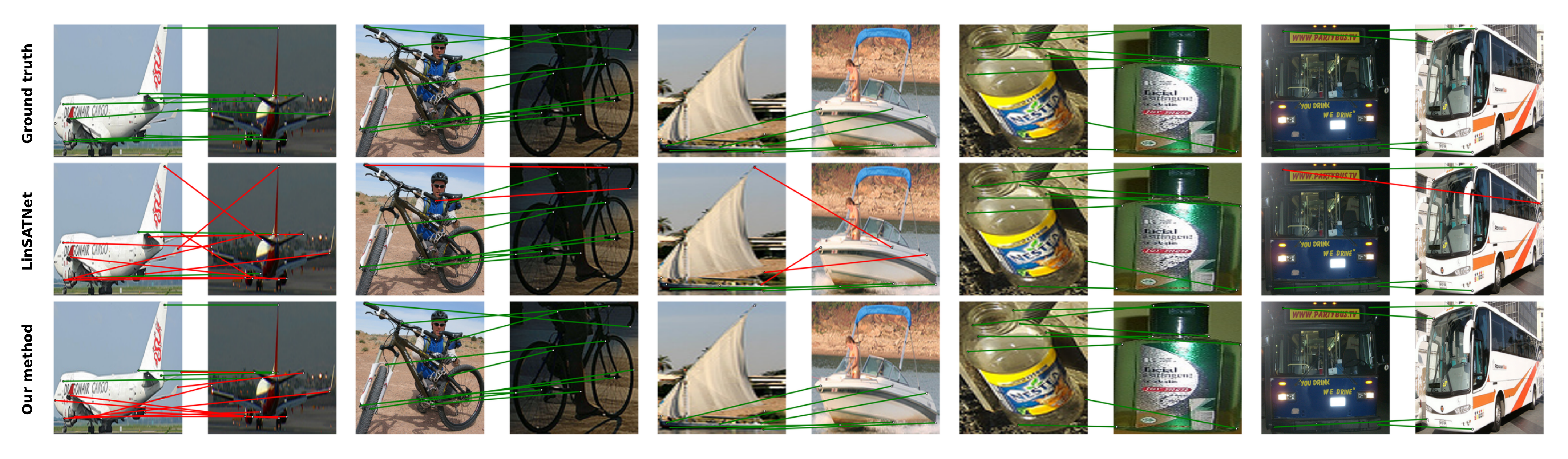}
        \caption{The corresponding categories of matched images are aeroplane, bicycle, boat, bottle, and bus.}
        \label{fig: graph_visual_SI_1}
    \end{subfigure}

    \begin{subfigure}{1\textwidth}
        \centering
        \includegraphics[width=1\textwidth]{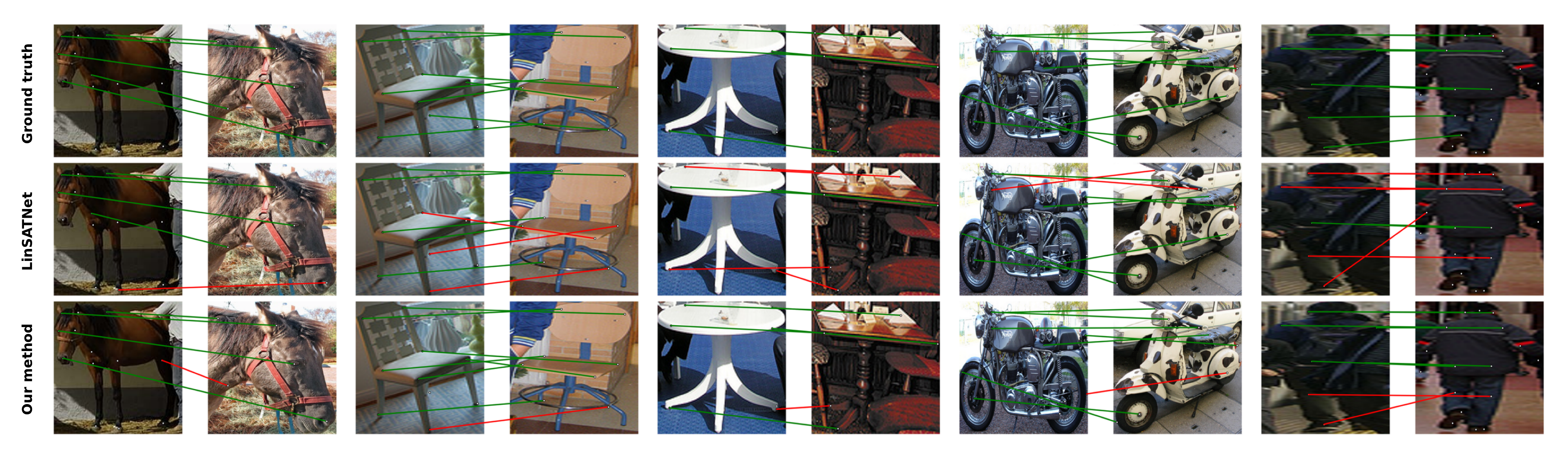}
        \caption{The corresponding categories of matched images are horse, chair, diningtable, motorbike, person.}
        \label{fig: graph_visual_SI_2}
    \end{subfigure}
        
    \begin{subfigure}{1\textwidth}
        \centering
        \includegraphics[width=1\textwidth]{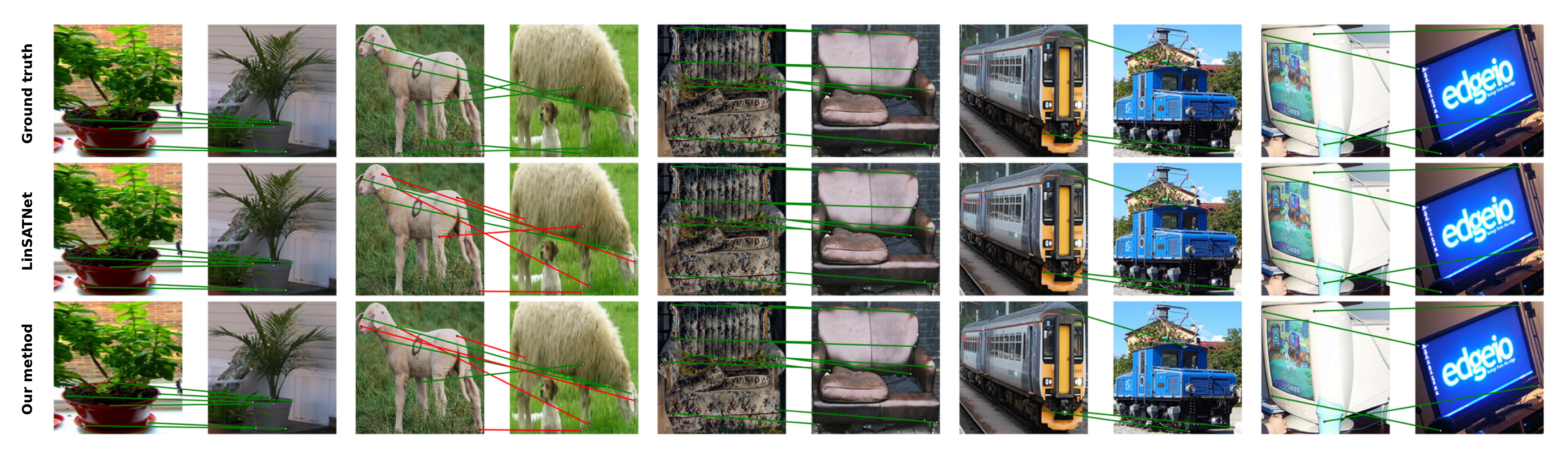}
        \caption{The corresponding categories of matched images are pottedplant, sheep, sofa, train, tvmonitor.}
        \label{fig: graph_visual_SI_3}
    \end{subfigure}
    
    \caption{Visualization of some graph matching results by LinSATNet and Projection layer on Pascal VOC Keypoint dataset. The green lines represent correct matching. The red lines represent incorrect matching. The images in the same column are chosen from the same category.}
    \label{fig: graph_visual_SI_all}
\end{figure}

\newpage
\section{Manifold-constrained hyper-connection}
\label{sec: appendix_mhc}

We details the experimental settings of training mHC-based ViT.

\noindent \textbf{Model.} As for the backbone, we adopt the ViT-base architecture, strictly adhering to the configurations specified in~\cite{dosovitskiy2021an}. The residual connection is replaced by the following hyper-connection \cite{zhu2024hyper}
\begin{equation*}
\label{eq:hyperconn}
y^\top = \mathbf{1}^{\top}_{c} \left( H^{\mathrm{res}} X + H^{\mathrm{post}} \mathcal{F} \left( \left( H^{\mathrm{pre}} X\right)^{\top}, \theta \right)^\top \right),
\end{equation*}
where $X=\mathbf{1}_cx^\top \in \mathbb{R}^{c\times d} $ represents the input information across $c$ paths, $H^{\mathrm{res}} \in \mathbb{R}^{c \times c}$, $H^{\mathrm{pre}} \in \mathbb{R}^{1 \times c}$, $H^{\mathrm{post}} \in \mathbb{R}^{c \times 1}$ are learnable parameters, $y\in \mathbb{R}^c$ is the output, and $\mathcal{F}$ represents the mapping corresponding to the Transformer layer in \cite{dosovitskiy2021an}. To stabilize the training, manifold-constrained hyper-connection (mHC) is proposed to ensure $H^{\mathrm{res}} \in \mathcal{P}_B$, where $\mathcal{P}_B$ is the Birkhoff polytope
\begin{equation}
\label{eq: cons_mhc}
    \mathcal{P}_B = \{H \in \mathbb{R}^{c \times c} ~|~ H\textbf{1}_c = \textbf{1}_c, H^\top \textbf{1} = \textbf{1}_c, H \ge 0\}.
\end{equation}
A finite-step Sinkhorn algorithm \cite{sinkhorn1967concerning} is applied to enforce the linear constraints on $H^{\mathrm{res}}$ in mHC. We conduct the comparative experiments on ViT-base architecture with the residual connection replaced by mHC and the proposed projection layer, respectively. In the experiment, we set $c=8$.

\noindent \textbf{Forward pass of projection layer.} We apply a well-established semismooth Newton algorithm \cite{li2020efficient} to perform the forward pass in this experiment. 

\noindent \textbf{Dataset.} The ILSVRC-2012 ImageNet dataset \cite{deng2009imagenet} with 1k classes and 1.3M images is used for image classification in this experiment.

\noindent \textbf{Evaluation.} The model performance is evaluated by the top-1 accuracy, which is computed by the percentage of images of correct class prediction.

\noindent \textbf{Feasibility computation.} We first reformulate the linear constraints in \eqref{eq: cons_mhc} into a vector form 
\begin{equation*}
    A_1 \mathrm{vec}(H) = \textbf{1}_c, \quad A_2 \mathrm{vec}(H) = \textbf{1}_c, \quad \mathrm{vec}(H) \ge 0,
\end{equation*}
where $\mathrm{vec}(\cdot)$ is a row-major vectorization operator,
\begin{equation*}
    A_1 = I_c \otimes \textbf{1}_c^\top, \quad A_2 = \textbf{1}_c^\top \otimes I_c.
\end{equation*}
For each sample, the feasibility violation is computed by
\begin{equation*}
    V^{\mathrm{all}}=\max\{V_1^{\mathrm{eq}}, V_2^{\mathrm{eq}}, V^{\mathrm{ineq}}\},
\end{equation*}
where
\begin{equation*}
    V_1^{\mathrm{eq}} = \| A_1\mathrm{vec}(H)-\textbf{1}_c\|_2^2, \quad V_2^{\mathrm{eq}} = \|A_2 \mathrm{vec}(H)-\textbf{1}_c\|_2^2, \quad V^{\mathrm{ineq}} = \| \min\{\mathrm{vec}(H),0\} \|_2^2.
\end{equation*}
In the experiment, the feasibility violation is the average of all samples during 1000 steps across all layers.

\noindent \textbf{Other hyperparameters.} The comprehensive hyperparameters for training are demonstrated in Table~\ref{tab: train_param}.

\begin{table}[ht]
    \centering
    \caption{Training hyperparameters for ViT.}
    \vspace{1em} 
    
    \definecolor{tablegray}{gray}{0.85}
    
    \renewcommand{\arraystretch}{1.2}
    
    \begin{tabular}{>{\columncolor{tablegray}\bfseries}l l}
        \toprule
        \rowcolor{tablegray} \text{Hyperparameter} & \textbf{Value} \\
        \midrule
        Learning rate  & 9e-5\\
        Batch size         & 256 (128 per GPU $\times$ 2 GPUs) \\
        Scheduler          & Cosine annealing with linear warmup (300 steps)  \\
        Data augmentation  & Mixup ($\alpha = 0.2$) \\
        Epochs             & 10 \\
        Optimizer          & AdamW ($\beta_1 = 0.9, \beta_2 = 0.999, \epsilon = 1e-8$) \\
        Gradient clipping  & 1.0 \\
        Weight decay       & 0.3 \\
        Dropout            & 0.1 \\
        Precision          & bf16 \\
        \bottomrule
    \end{tabular}
    \label{tab: train_param}
\end{table}

Mixup~\cite{zhang2017mixup}, as specified in Table~\ref{tab: train_param}, is a widely-used domain-agnostic data augmentation technique. It generates virtual training examples by forming convex combinations of original inputs and their corresponding labels. Unlike traditional geometric augmentations (e.g., cropping or flipping), Mixup regularizes the model to maintain linear behavior between training samples. Formally, given two random samples $(x_i, y_i)$ and $(x_j, y_j)$ from the training set, a synthetic sample $(x, y)$ is generated as follows:$$\lambda \sim \text{Beta}(\alpha, \alpha),$$
$$x = \lambda x_i + (1 - \lambda) x_j,$$
$$y = \lambda y_i + (1 - \lambda) y_j.$$

\noindent \textbf{More numerical results.}

We provide additional experimental results to further validate our approach. Beyond the memory efficiency noted in the main manuscript, our proposed projection layer outperforms the Sinkhorn algorithm—even with 30 iterations—in both training loss convergence (Fig.~\ref{fig:train_loss}) and feasibility error (Fig.~\ref{fig:feas_err}). Notably, increasing the Sinkhorn iterations from 20 to 30 incurs substantial additional memory overhead. However, the resulting improvement in feasibility remains marginal, and the impact on training loss convergence is nearly negligible, with both curves appearing virtually indistinguishable.

\begin{figure}[h]
\centering
\begin{subfigure}[b]{0.49\textwidth}
    \centering
    \includegraphics[width=\linewidth]{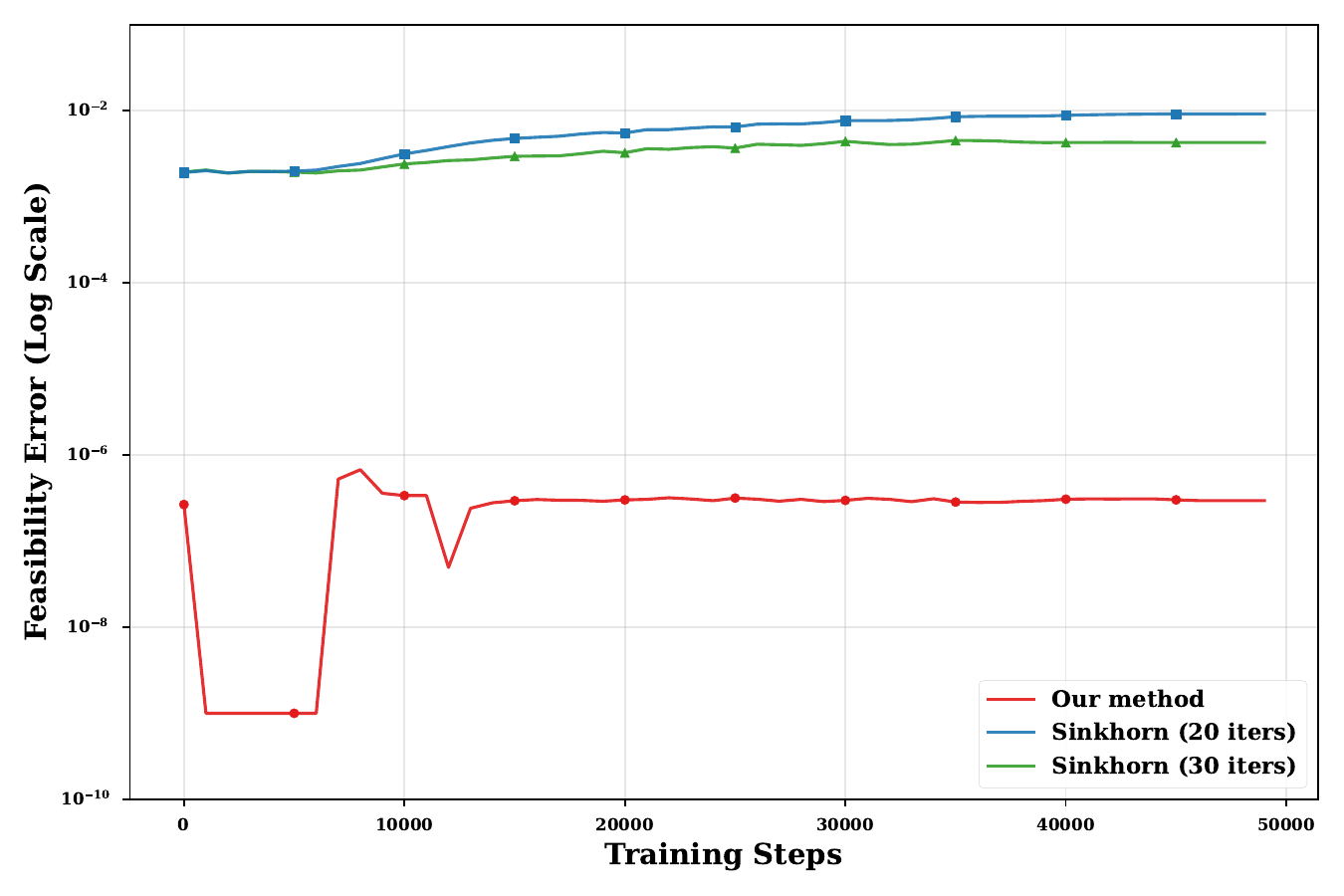}
    \caption{Feasibility error comparison between different algorithms.}
    \label{fig:feas_err}
\end{subfigure}
\hfill
\begin{subfigure}[b]{0.49\textwidth}
    \centering
    \includegraphics[width=\linewidth]{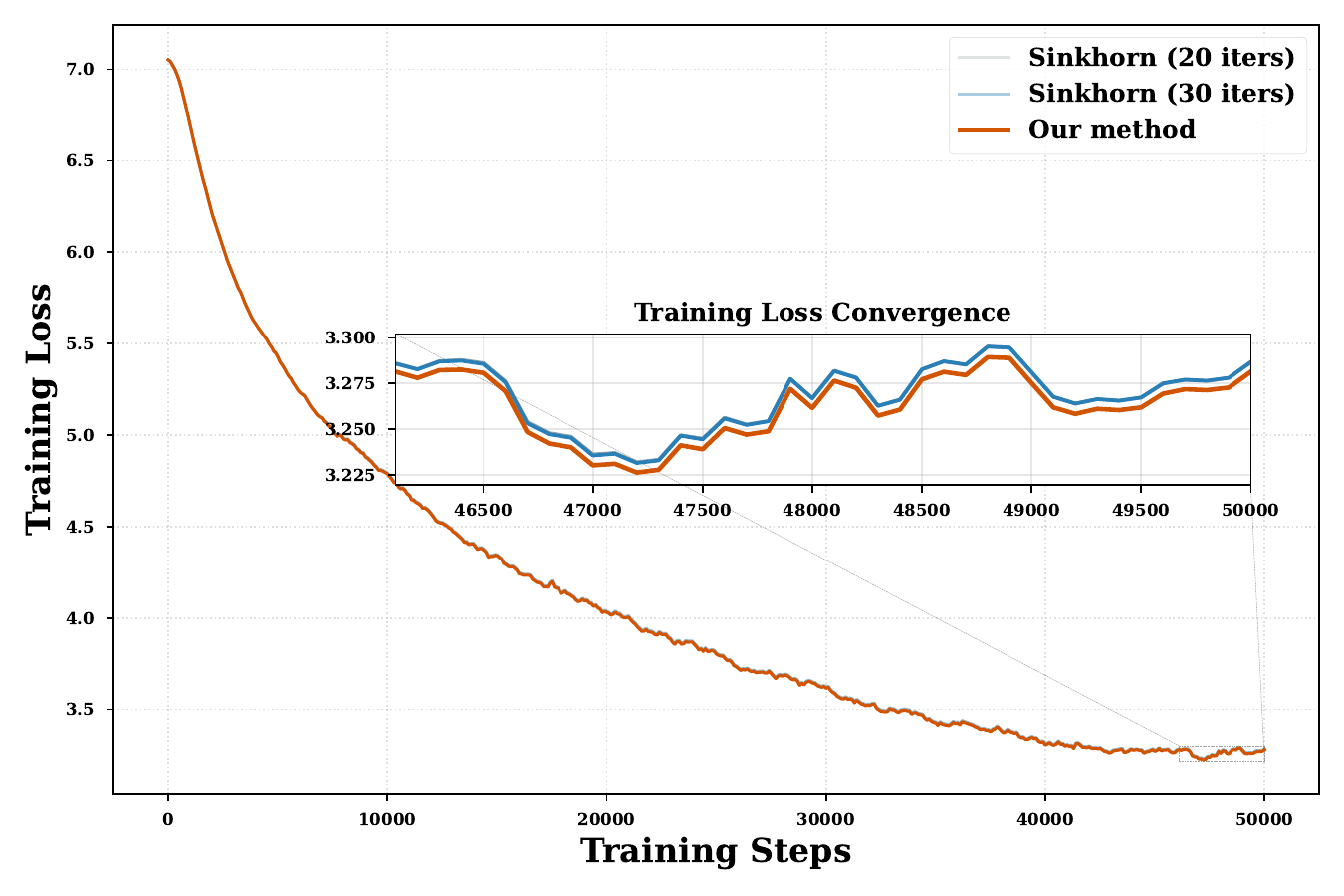}
    \caption{Training loss comparison between different algorithms.}
    \label{fig:train_loss}
\end{subfigure}
\caption{Feasibility error and training loss comparison on ImageNet.}
\label{fig: vit_main_supp}
\end{figure}


Table~\ref{tab:accuracy_comparison} presents the Top-1 accuracy at different training stages. While all three methods achieve comparable accuracy, our proposed projection layer consistently maintains a slight edge over the iterative Sinkhorn baselines. Notably, even when increasing the Sinkhorn iterations to 30, the accuracy does not show a meaningful improvement and occasionally lags behind, further demonstrating the robustness and effectiveness of our projection approach.


\begin{table}[htbp]
\centering
\caption{Comparison of Top-1 Accuracy (\%) throughout the training process.}
\label{tab:accuracy_comparison}
\begin{tabular}{lccc}
\toprule
\textbf{Epoch} & \textbf{Sinkhorn (20 iters)} & \textbf{Sinkhorn (30 iters)} & \textbf{Projection Layer} \\
\midrule
Epoch 1  & 13.98 & 13.95 & \textbf{14.05} \\
Epoch 2  & 22.79 & 22.68 & 22.76 \\
Epoch 3  & 30.15 & 30.18 & \textbf{30.37} \\
Epoch 4  & 35.16 & 35.19 & \textbf{35.37} \\
Epoch 5  & 39.81 & 39.78 & \textbf{39.90} \\
Epoch 6  & 42.84 & 42.74 & \textbf{42.90} \\
Epoch 7  & 45.52 & 45.55 & \textbf{45.74} \\
Epoch 8  & 47.45 & 47.45 & \textbf{47.69} \\
Epoch 9  & 48.43 & 48.41 & \textbf{48.56} \\
Epoch 10 & 48.67 & 48.56 & \textbf{48.73} \\
\midrule
\bottomrule
\end{tabular}
\end{table}

\end{document}